\newcommand{\loza}
{--++ (1/2,{sqrt(3)/2})
--++ (-1/2,{sqrt(3)/2})
--++ (-1/2,{-sqrt(3)/2})
--++ (1/2,{-sqrt(3)/2})
[fill=yellow!100!white!]}
\newcommand{\lozb}
{--++ (1,0)
--++ (-1/2,{sqrt(3)/2})
--++ (-1,0)
--++ (1/2,{-sqrt(3)/2})
[fill=blue!80!white!]}
\newcommand{\lozc}
{--++ (1,0)
--++ (1/2,{sqrt(3)/2})
--++ (-1,0)
--++ (-1/2,{-sqrt(3)/2})
[fill=red!60!white!]}
\newcommand{\lozyt}
{--++ (0,1)
--++ (-1,1)
--++ (0,-1)
--++ (1,-1)
[fill=yellow!100!white!]}
\newcommand{\lozbt}
{--++ (1,0)
--++ (-1,1)
--++ (-1,0)
--++ (1,-1)
[fill=blue!80!white!]}
\newcommand{\lozrt}
{--++ (1,0)
--++ (0,1)
--++ (-1,0)
--++ (0,-1)
[fill=red!60!white!]}
\numberwithin{equation}{section}
\numberwithin{equation}{section}
\newtheorem{thma}{Theorem}[section]
\newtheorem{lemma}[thma]{Lemma}
\newtheorem{defi}[thma]{Definition}
\newtheorem{prop}[thma]{Proposition}
\newtheorem{remark}[thma]{Remark}
\newtheorem{condition}{Condition}
\renewcommand{\Re}{\mathrm{Re }}
\renewcommand{\Im}{\mathrm{Im }}
\newcommand{\eps}{\varepsilon}
\newcommand{\Ai}{\text{Ai\,}}
\newcommand{\Tr}{\text{tr\,}}
\newcommand{\sgn}{\text{sgn\,}}
\begin{document}

\title{Edge fluctuations of limit shapes}

\author{Kurt Johansson} 

\address{
Department of Mathematics,
KTH Royal Institute of Technology,
SE-100 44 Stockholm, Sweden}

\thanks{Supported by the Knut and
  Alice Wallenberg Foundation grant KAW:2010.0063 and by the Swedish Science Research Council (VR)}

\maketitle
\begin{abstract}
In random tiling and dimer models we can get various limit shapes which gives the boundaries between different types of phases. The shape fluctuations at
these boundaries give rise to universal limit laws, in particular the Airy process. We survey some models which can be analyzed in detail based on the fact that
they are determinantal point processes with correlation kernels that can be computed. We also discuss which type of limit laws that can be obtained.

\end{abstract}
\tableofcontents

\section{Introduction} \label{sec:1}

This paper surveys various results on edge fluctuations of limit shapes. By this we mean that we have a microscopic stochastic model that in some macroscopic limit has a
non-random limit shape with some interfaces or boundaries between different regions. Such a  boundary is what we will call an \emph{edge}. At an intermediate scale the corresponding random interface has fluctuations, and we can obtain stochastic processes or limit laws by considering appropriate scaling limits. Whereas the macroscopic limit shape is non-universal, i.e. depends on the details
of the model, the edge fluctuations are often given by universal limit laws that occur in many different contexts. Microscopic limits, like Gibbs measures, are also less universal
since they also depend more on the details of the model. The simplest example of this universality phenomenon is a random walk. Let $X_1, X_2,\dots$ be a sequence of independent identically distributed random variables with finite variance, and let $S_N=X_1+\dots+X_N$ be the sum of the first $N$ of them. We can think of $S_N$ as the
position of an interface. Then, we have the law of large numbers which says that  $S_N/N\to c=\mathbb{E}[X_1]$ almost surely as $N\to \infty$. This is the macroscopic limit, the limit shape,
and it is non-universal since it depends on $\mathbb{E}[X_1]$ and hence on the exact distribution of $X_1$. The intermediate level scaling limit is the central limit theorem, which
says that $(S_N-Nc)/\sigma \sqrt{N}\to N(0,1)$ in distribution as $N\to\infty$, where $\sigma^2$ is the variance of $X_1$. Here, the limit is always a standard Gaussian irrespective of the exact distribution of $X_1$. This is the classical example of a universal scaling limit.

In this paper we will consider edge fluctuations in two-dimensional random tiling models or dimer models on bipartite graphs which have interesting edge scaling limits.
The same type of scaling limits also occur in random matrix theory, in local random growth models, directed random polymers and in interacting particle systems. We will not discuss these other very interesting models but only give references to some survey papers, see section \ref{rem:Schur}.
Let us give some examples in the form of pictures of the type of random tiling models we will be looking at.

\begin{center}
\begin{figure}
\includegraphics[height=3in]{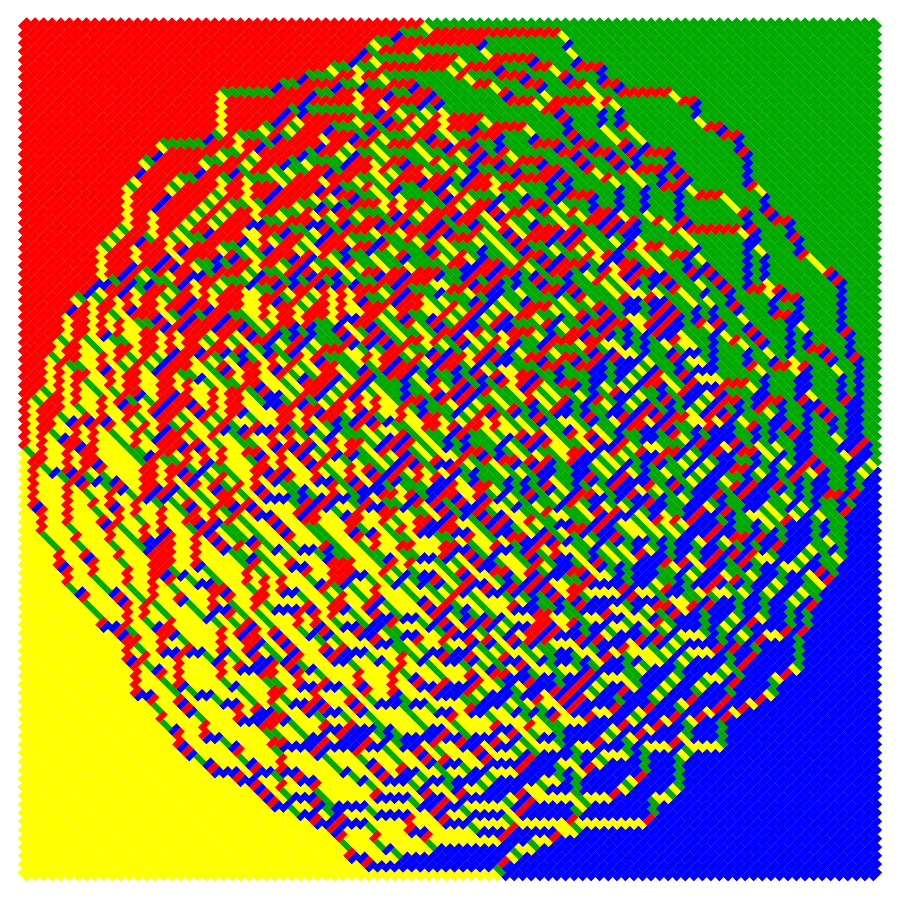}
\hspace{2mm}
\includegraphics[height=3in]{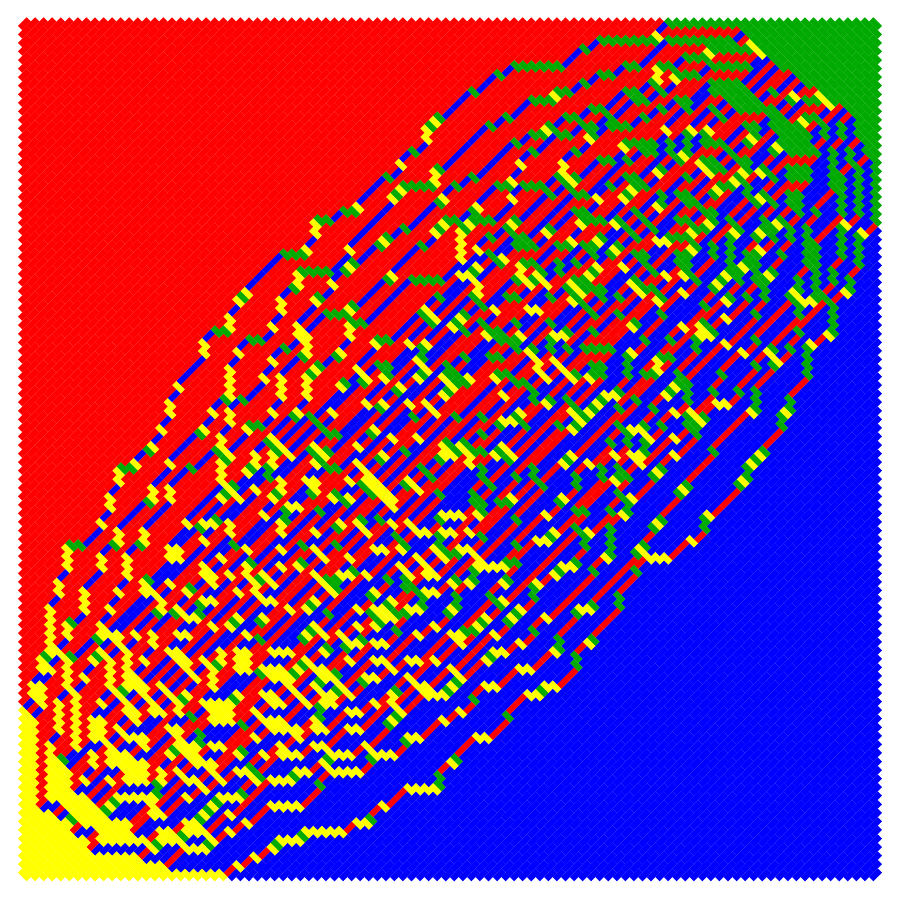}
\caption{The left figure is a uniform sample of a tiling of an Aztec diamond. The right figure is a sampling where vertical dominos (yellow and green) have weight $a=1/2$ and horizontal dominos (blue and red) have weight =1. The model and the four colours are explained in section \ref{sec:Aztec}.}
\label{fig:aztecsim}
\end{figure} 
\end{center}

In figure \ref{fig:aztecsim} we see that a random domino tiling of a shape called the Aztec diamond splits into regions with different tiling structure. There is a disordered tiling region in the center, called the {\it liquid region},
surrounded by a non-random or frozen region, called a {\it solid region}. The interface or edge, is the boundary between these two regions, and it is this boundary
that is our main interest. A similar phenomenon, but with three different types of regions and two boundary curves, is seen in figure \ref{fig:twoperiodicsim}. In this case, around the
center of the picture, we encounter a third type of tiling structure and we call it a {\it gas region}.

\begin{figure}
\begin{center}
\includegraphics[height=4in]{}
\includegraphics[height=4in]{}
\caption{Two different drawings of a simulation of a domino tiling of the two-periodic Aztec diamond of size 200 with $a=0.5$.  The top figure contains eight different colors, highlighting the solid and liquid phases. The bottom figure contains eight different gray-scale colors to accentuate the gas phase. }
\label{fig:twoperiodicsim}
\end{center}
\end{figure}

In the limit, see figure \ref{fig:arcticcurves}, the boundary converges to a non-random limit curve, the {\it arctic curve}. Before the limit we have fluctuations around this limit curve which we can study by taking appropriate scaling limits. Further examples of random tilings can be seen in figures \ref{fig:hexagonsim} and \ref{fig:tpn200}, and we can ask if we get the same
scaling limits in all the different models? The fact that we get these boundaries between different types of regions depends sensitively on the shape of the boundary. A random domino tiling of a rectangle will look disordered everywhere and we can see no boundary between different types of regions.

\begin{figure}
\begin{center}
\includegraphics[height=2in]{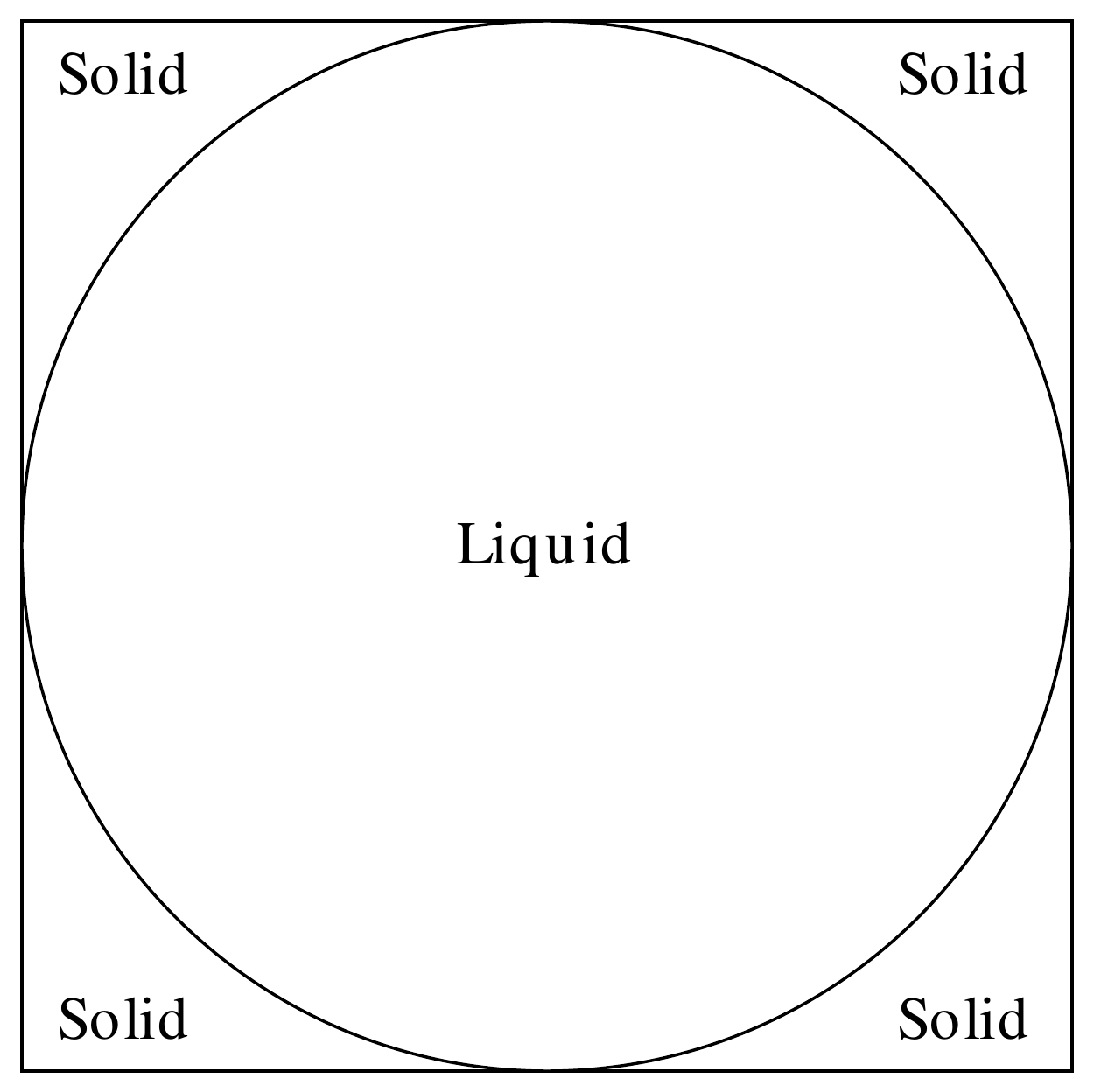}
\hspace{5mm}
\includegraphics[height=2in]{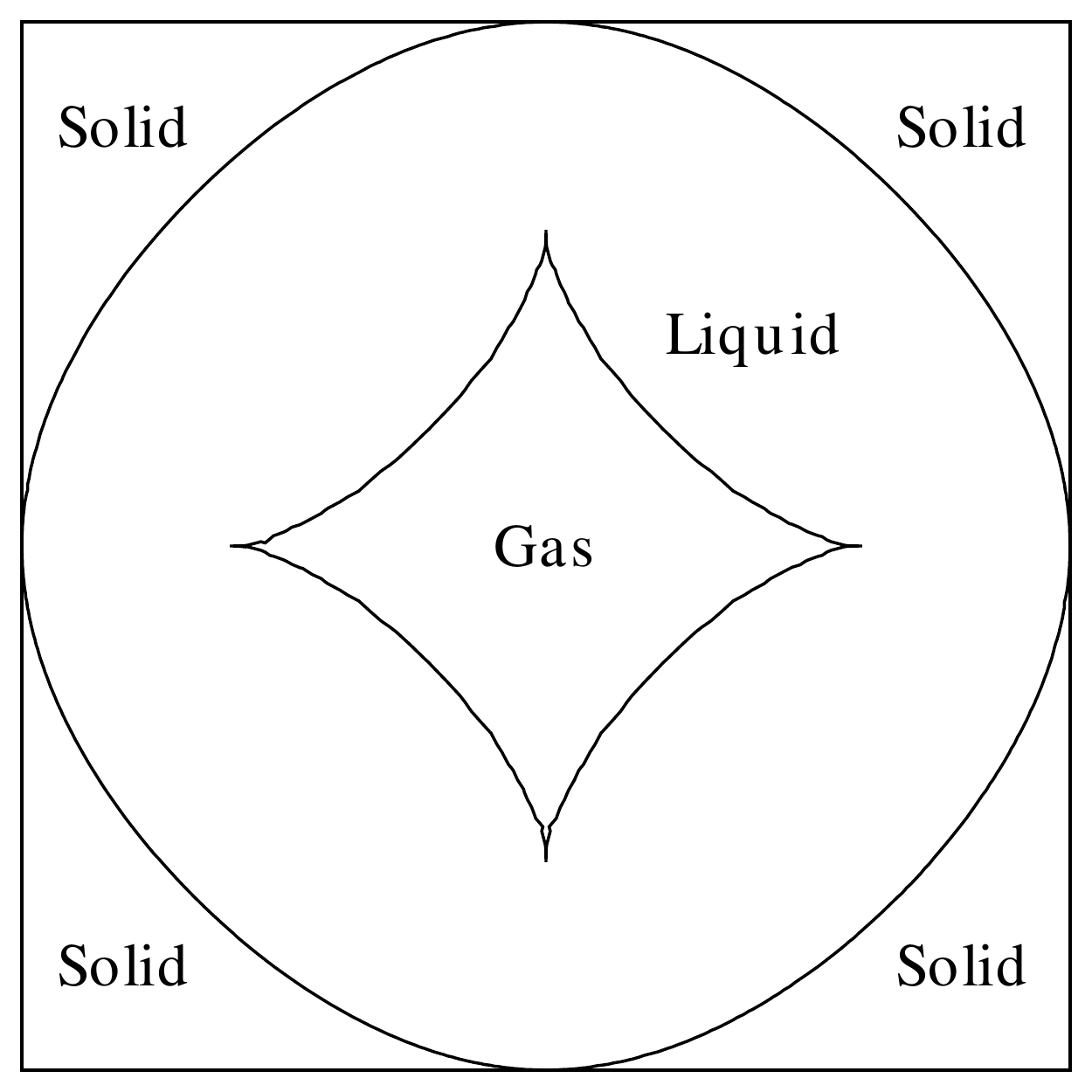}
\caption{The arctic curves or limit shapes of the boundaries between different types of tiling regions. To the left for the uniform tiling of an Aztec diamond, and to the right for the
two-periodic Aztec diamond.}
\label{fig:arcticcurves}
\end{center}
\end{figure}

\begin{figure}
\begin{center}
\includegraphics[height=3.5in]{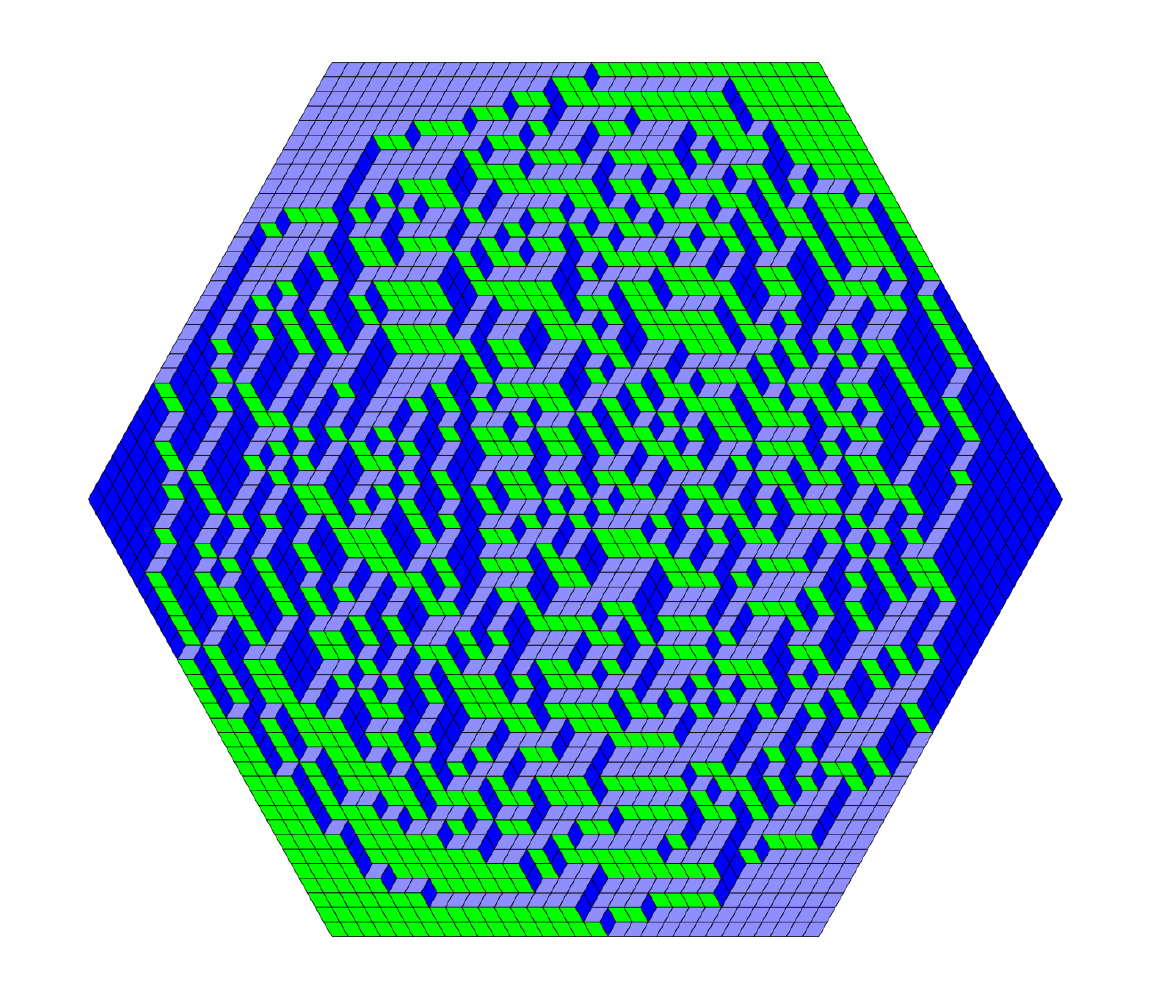}
\caption{A uniform random rhombus or lozenge tiling of a regular hexagon. (Picture by L. Petrov)}
\label{fig:hexagonsim}
\end{center}
\end{figure}

\begin{figure}
\begin{center}
\includegraphics[height=3.5in]{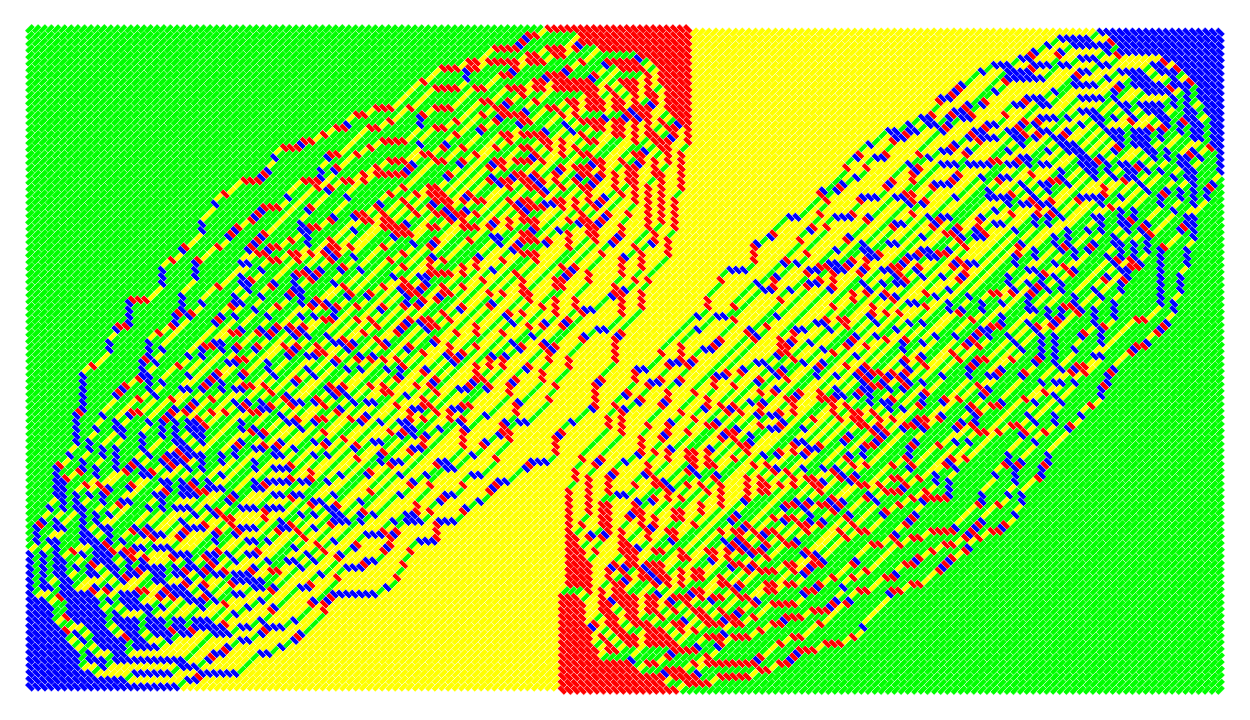}
\caption{A simulation of a random tiling of the double Aztec diamond.}
\label{fig:tpn200}
\end{center}
\end{figure}

The main questions we will address are

\begin{enumerate}  
\item How can we find useful formulas to evaluate the edge scaling limits at least in some models as those seen in the pictures?
\item Which natural scaling limits  can we get? Can we compute them and understand their properties?

\end{enumerate}

A third, and very important question, is that about universality. Can we prove that the same universal scaling limits occur in broader classes of models? We will
not say much about this question.

The basic structure that makes it possible to answer the first question are determinantal point processes. In sections 2-5 we will develop much of the machinery
that is needed.  The second question will be discussed in section \ref{sec:7} where we will go through many of the relevant scaling limits, and indicate how they can be obtained in various models using the techniques from sections 2-5. In section \ref{sec:8} we will discuss a more complicated model, the two-periodic
Aztec diamond, see figure \ref{fig:twoperiodicsim}, which is still determinantal but that cannot be handled by the methods of sections 2-5. Also, in this model, as can be seen in the figure, the inner boundary between the liquid and gas phases does not have a clear local microscopic definition. 

\subsubsection{Some notation}\label{Notation} Throughout the paper $\gamma_r$ will be the positively oriented circle of radius $r$ around the origin in the complex plane.
The indicator function will be denoted by $\mathbb{I}$ and $\mathrm{i}$ always denotes the imaginary unit. If $x=(x_1,\dots,x_n)$, the size $n$ Vandermonde determinant is denoted by
\begin{equation*}
\Delta_n(x)=\det(x_i^{j-1})_{1\le i,j\le n}.
\end{equation*}
Some basic facts about partitions and Schur polynomials are summarized in the Appendix.

\subsection*{Acknowledgments} I thank Erik Duse  for several pictures. Many thanks to Sunil Chhita for many pictures and helpful comments.

\section{Determinantal point processes} \label{sec:2}

This section gives an overview of determinantal point processes, and discusses an important way to obtain such processes via probability measures defined by products
of determinants.

\subsection{Definition and basic properties} \label{sec:2.1}

We will be rather brief in our presentation of determinantal point processes and refer to \cite{BorDPP}, \cite{JoHouch}, or \cite{SoshDPP} for more details. Let $\mathfrak{X}$ be a
complete, separable metric space. By $\mathcal{N}(\mathfrak{X})$ we denote the space of all boundedly finite counting measures $\xi$ on $\mathfrak{X}$, i.e. $\xi$ is a Borel
measure on $\mathfrak{X}$ such that $\xi(B)\in\{0,1,2,\dots\}$ for all Borel sets $B$ in $\mathfrak{X}$, and $\xi(B)<\infty$ for all bounded Borel sets $B$. We say that
$\xi$ is {\it simple} if $\xi(\{x\})\le 1$ for all $x\in\mathfrak{X}$. Typical examples of $\mathfrak{X}$ are $\mathbb{R}^d$, $\mathbb{Z}^d$, finite sets, and subsets or products of
such spaces. We assume that $\mathfrak{X}$ has a reference measure $\lambda$; for example in $\mathbb{R}$, we can take Lebesge measure and in a discrete space the
standard counting measure. Define a $\sigma$-algebra on $\mathcal{N}(\mathfrak{X})$ by taking the smallest $\sigma$-algebra for which the map $A\to\xi(A)$ is measurable
for all $\xi\in\mathcal{N}(\mathfrak{X})$ and all Borel sets $A$ in $\mathfrak{X}$.

A {\it point process} on $\mathfrak{X}$ is a probability measure $\mathbb{P}$ on $\mathcal{N}(\mathfrak{X})$. Let $\mathbb{E}$ denote the corresponding expectation. The point
process $\xi$ is simple if $\mathbb{P}(\xi\text{\,simple})=1$. If $\xi\in\mathcal{N}(\mathfrak{X})$ and $B$ is bounded, we can write
\begin{equation*}
\xi|_B=\sum_{i=1}^{\xi(B)}\delta_{x_i},
\end{equation*}
for some $x_i\in\mathfrak{X}$, $1\le i\le \xi(B)$, which can coincide if we have points of multiplicity greater than one. We often think of the $x_i$'s as the position of particles and simply call $x_i$ a particle, and talk about a particle process. Notice that there is no ordering of the points or particles.
If $\phi:\mathfrak{X}\to\mathbb{C}$ is a bounded
function with support in a bounded set $B$, we write
\begin{equation*}
\prod_{i}(1-\phi(x_i))=\prod_{i=1}^{\xi(B)}(1-\phi(x_i)).
\end{equation*}
(The left side is $=1$ if $\xi(B)=0$.)

\begin{defi}\label{def:corrfcns}
Let $\xi$ be a point process on $\mathfrak{X}$ and let $\rho_n:\mathfrak{X}^n\to\mathbb{C}$, $n\ge 1$, be a sequence of measurable functions. We say that $\xi$ has
{\it correlation functions} $\rho_n$, if
\begin{equation}\label{corrfcnsdef}
\mathbb{E}\left[\prod_{i}(1-\phi(x_i))\right]=\sum_{n=0}^\infty\frac{(-1)^n}{n!}\int_{\mathfrak{X}^n}\prod_{j=1}^n\phi(x_j)\rho_n(x_1,\dots,x_n)\,d^n\lambda(x),
\end{equation}
for all bounded functions $\phi$ on $\mathfrak{X}$ with bounded support, and the sum in the right side is convergent. The term $n=0$ in the right side of (\ref{corrfcnsdef})
is $=1$.
\end{defi}

It is not hard to see that if $\mathfrak{X}$ is a discrete space and $\lambda$ is counting measure, then
\begin{equation*}
\rho_n(x_1,\dots,x_n)=\mathbb{P}[\text{there are particles at\,} x_1,\dots, x_n].
\end{equation*}
If instead $\mathfrak{X}=\mathbb{R}$ and $\lambda$ is Lebesgue measure, then we can think of the correlation function as
\begin{equation*}
\rho_n(x_1,\dots,x_n)=\lim_{\Delta x_i\to 0}\frac{\mathbb{P}[\text{one particle in each\,} [x_i,,x_i+\Delta x_i], 1\le i\le n]}{\Delta x_1\dots\Delta x_n}.
\end{equation*}
Thus, in the continuous case $\rho_n(x_1,\dots,x_n)$ is the density of $n$-tuples of points in the process, and it is not a probability density. If $\xi$ has correlation functions $\rho_n$, $n\ge 1$, then it is uniquely determined by them.

We can now define what is meant by a determinantal point process.

\begin{defi}\label{def:determinantal}
Let $\xi$ be a point process on a complete, separable metric space $\mathfrak{X}$, with reference measure $\lambda$, all of whose correlation functions $\rho_n$ exist.
Furthermore, let $K:\mathfrak{X}\times\mathfrak{X}\to\mathbb{C}$ be a measurable function. We say that $\xi$ is a determinantal point process on $\mathfrak{X}$ with
correlation kernel $K$ if
\begin{equation}\label{detcorrfcns}
\rho_n(x_1,\dots,x_n)=\det(K(x_i,x_j))_{1\le i,j\le n}
\end{equation}
for all $x_1,\dots, x_n\in\mathfrak{X}$, $n\ge 1$.
\end{defi}

A determinantal point process is a simple point process. Note that the correlation kernel $K(x,y)$ is not unique. If $c:\mathfrak{X}\to\mathbb{C}$ is measurable
and $\neq 0$, then the \emph{conjugated kernel}
\begin{equation}\label{conjugatekernel}
\frac{c(x)}{c(y)}K(x,y)
\end{equation}
is also a correlation kernel. By combining (\ref{corrfcnsdef}) and (\ref{detcorrfcns}) we see that for a determinantal point process,
\begin{equation}\label{detprocexpansion}
\mathbb{E}\left[\prod_{i}(1-\phi(x_i))\right]=\sum_{n=0}^\infty\frac{(-1)^n}{n!}\int_{\mathfrak{X}^n}\prod_{j=1}^n\phi(x_j)\det(K(x_i,x_j))_{1\le i,j\le n}\,d^n\lambda(x).
\end{equation}
We often think of $K(x,y)$ as the kernel of an integral operator $K:L^2(\mathfrak{X},\lambda)\to L^2(\mathfrak{X},\lambda)$,
\begin{equation*}
(Kf)(x)=\int_{\mathfrak{X}} K(x,y)f(y)\,d\lambda(y),
\end{equation*}
provided that this operator is well-defined. If $\phi$ has support in $B$, we can write (\ref{detprocexpansion}) as
\begin{equation}\label{fredholmexp}
\mathbb{E}\left[\prod_{i}(1-\phi(x_i))\right]=\sum_{n=0}^\infty\frac{(-1)^n}{n!}\int_{\mathfrak{X}^n}\det(\phi(x_i)K(x_i,x_j)\mathbb{I}_B(x_j))_{1\le i,j\le n}\,d^n\lambda(x).
\end{equation}
If $\phi K\mathbb{I}_B$ is the kernel of a trace class operator on $L^2(\mathfrak{X},\lambda)$, such that 
$\Tr \phi K\mathbb{I}_B=\int_{\mathfrak{X}}(\phi K\mathbb{I}_B)(x,x)\,d\lambda(x)$,
the right side of (\ref{fredholmexp}) is the Fredholm expansion of a Fredholm determinant, and we obtain
\begin{equation}\label{fredholmdet}
\mathbb{E}\left[\prod_{i}(1-\phi(x_i))\right]=\det(I-\phi K\mathbb{I}_B)_{L^2(\mathfrak{X},\lambda)}=\det(I-\phi K)_{L^2(B,\lambda)}.
\end{equation}

If $\psi:\mathbb{X}\to\mathbb{C}$ is bounded and has bounded support, we can investigate the point process $\xi$ through the linear statistic
\begin{equation*}
\sum_i \psi(x_i)=\int_{\mathfrak{X}}\psi(t)\,d\xi(t).
\end{equation*}
The study of this linear statistic connects well with the formulas above since its Laplace transform
\begin{equation}\label{Laplacetransf}
\mathbb{E}\left[e^{\sum_i\psi(x_i)}\right]=\mathbb{E}\left[\prod_{i}(1-\phi(x_i))\right],
\end{equation}
where $\phi=1-e^{\psi}$. We could, for example, take $\psi(x)=-t\mathbb{I}(x)$ in order to study the number of points in a bounded set $B$.
Other quantities that fit the above framework are hole or gap probabilities. If $B$ is a bounded, measurable set, the taking $\phi=-\mathbb{I}_B$,
we obtain
\begin{equation}\label{Laplacetrans}
\mathbb{P}[\text{no particle in\,}B]=\det(I-K)_{L^2(B,\lambda)}.
\end{equation}

Let $\mathfrak{X}$ be a finite set, with reference measure the counting measure, and $\xi$ a determinantal point process on $\mathfrak{X}$,
\begin{equation*}
\xi= \sum_i \delta_{x_i}.
\end{equation*}
We can then construct a new point process, the {\it dual point process} $\xi^*$ on $\mathfrak{X}$, by letting
\begin{equation*}
\xi^*= \sum_i \delta_{x_i'},
\end{equation*}
where $\{x_i'\}=\mathfrak{X}\setminus\{x_i\}$. Then $\xi^*$ is also a determinantal point process with kernel $I-K$, since
\begin{align}\label{holeprobFredholm}
&\mathbb{P}[\xi^* \,\,\,\text{has particles at\,}y_1,\dots y_n]=\mathbb{P}[\xi \,\,\text{has no particles at\,}y_1,\dots y_n]\\
&=\det(I-K)_{L^2(\{y_1,\dots,y_n\},\lambda)}=\det(\delta_{ij}-K(y_i,y_j))_{1\le i,j\le n}.\notag
\end{align}

\subsection{Measures defined by products of determinants} \label{sec:2.2}

An important way to get a determinantal point process is via measures defined by products of determinants which can be thought of as determinantal transition functions.
These types of measures occur naturally in connection with non-intersecting paths and in random matrix theory. We will meet several examples below.
Let $L<R$ be two given integers, and let $X$ be a complete, separable metric space with a Borel measure $\mu$, and let $X_L$ and $X_R$ be given
sets. For $L<r<R$, we let $X_r=X$. Also, let $p_{r,r+1}:X_r\times X_{r+1}\to\mathbb{C}$, $L\le r<R$, be a sequence of measurable functions that we call
{\it transition functions}. Fix $(x_1^L,\dots,x_M^L)\in X_L^M$ and $(x_1^R,\dots,X_M^R)\in X_R^M$, for a given $M\ge 1$, and consider the measure on
$(X^M)^{R-L-1}$ with density
\begin{equation}\label{productmeasure}
q_{L,R,M}(\mathbf{x})=\frac 1{(M!)^{R-L-1}Z_{L,R,M}}\prod_{r=L}^{R-1}\det(p_{r,r+1}(x_j^r,x_k^{r+1}))_{1\le j,k\le M},
\end{equation}
with respect to $d\mu(\mathbf{x})=\prod_{r=L+1}^{R-1}d^M\mu(x^r)$, where $\mathbf{x}=(x^{L+1},\dots,x^{R-1})$, $x^r=(x_1^r,\dots,x_M^r)\in X^M$.
We assume that all the determinants are $\ge 0$, and choose the normalization constant, the partition function, $Z_{L,R,M}>0$, so that (\ref{productmeasure})
becomes a probability measure. Let $\mathfrak{X}=\{L+1,\dots,R-1\}\times X$ with reference measure $\lambda=\nu\otimes\mu$, where $\nu$ is counting measure on
$\{L+1,\dots,R-1\}$. We can map $x_j^r$ to $(r,x_j^r)$ and in this way $\mathbf{x}$ is mapped to a point in $\mathfrak{X}$. In this way we get a point process on
$\mathfrak{X}$.

If $L\le r<s\le R$, $x\in X_r$ and $y\in X_s$, we define
\begin{equation}\label{prs}
p_{r,s}(x,y)=\int_{X^{s-r-1}}p_{r,r+1}(x,t_1)\dots p_{s-1,s}(t_{s-r-1},y)\,d^{s-r-1}\mu(t),
\end{equation}
and if $r\ge s$, then $p_{r,s}=0$. Furthermore, we define the matrix
\begin{equation}\label{matrixA}
A=(p_{L,R}(x_j^L,x_k^R))_{1\le j,k\le M}.
\end{equation}
It follows from the Cauchy-Binet or Andrieff identity (\ref{CBAnd}) that $\det A=Z_{L,R,M}>0$, so $A$ is invertible and we can define
\begin{equation}\label{generalKtilde}
\tilde{K}_{L,R,M}(r,u;s,v)=\sum_{i,j=1}^M p_{r,R}(u,x_j^R)(A^{-1})_{ji}p_{L,s}(x_i^L,v),
\end{equation}
where $(r,u), (s,v)\in\mathfrak{X}$. Set
\begin{equation}\label{generalK}
K_{L,R,M}(r,u;s,v)=-p_{r,s}(u,v)+\tilde{K}_{L,R,M}(r,u;s,v).
\end{equation}
We then have the following theorem.

\begin{thma}\label{productmeasurethm}
The point process on $\mathfrak{X}$ defined above is a determinantal point process with correlation kernel $K_{L,R,M}$ given by (\ref{generalK}).
\end{thma}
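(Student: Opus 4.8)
The plan is to verify the defining relation \eqref{detcorrfcns} directly by computing the correlation functions $\rho_n$ of the given point process and showing they equal $\det(K_{L,R,M}(r_i,u_i;r_j,u_j))$. The strategy rests on the Eynard--Mehta theorem, whose proof I would reconstruct from scratch using the Cauchy--Binet/Andrieff identity \eqref{CBAnd}. First I would fix a finite collection of space-time points and express the probability (or density) that the process has particles at exactly these locations. The key algebraic input is that the measure \eqref{productmeasure} is a product of determinants indexed by consecutive times $r, r+1$; integrating out the positions at times not belonging to our chosen set of observation points, and using the Cauchy--Binet identity to collapse each integration, reduces products of determinants over intermediate times into determinants built from the composed transition functions $p_{r,s}$ defined in \eqref{prs}. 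This is exactly the mechanism that already gave $\det A = Z_{L,R,M}$.

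The main computation I would carry out is the marginal of the measure on a subset of the time-slices. I would show that after integrating out all unobserved coordinates, the joint density of the observed particles is, up to the normalization $Z_{L,R,M} = \det A$, a single large determinant whose blocks are the functions $p_{r,s}(u,v)$ between observed times together with boundary terms $p_{L,s}(x_i^L,v)$ and $p_{r,R}(u,x_j^R)$ coupling to the fixed endpoints. The crucial structural fact is that this block determinant, after Schur-complementing out the (frozen) boundary slices $X_L$ and $X_R$ encoded in the matrix $A$, factors into the determinantal form \eqref{detcorrfcns} with kernel given precisely by \eqref{generalK}: the term $-p_{r,s}(u,v)$ arises from the ``propagator'' between observed slices, and the $\tilde K_{L,R,M}$ term in \eqref{generalKtilde} is exactly the contribution of the inverse $A^{-1}$ coupling the left and right boundary data. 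I would verify the kernel identities needed here, in particular the semigroup property $p_{r,s} \circ p_{s,t} = p_{r,t}$ that follows from \eqref{prs} and Fubini, and the reproducing-type relation that makes $K_{L,R,M}$ a genuine correlation kernel independent of the number $n$ of observation points.

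In carrying this out I would organize the argument so that the conjugation freedom noted in \eqref{conjugatekernel} is used to keep expressions clean, and I would treat the density interpretation of $\rho_n$ from Definition \ref{def:corrfcns} (rather than the generating-function characterization) to avoid combinatorial bookkeeping. The inductive or linear-algebraic step producing the Schur complement is where the nonuniqueness of the kernel and the invertibility of $A$ both enter; the positivity assumption on the determinants in \eqref{productmeasure} guarantees $Z_{L,R,M}>0$ so that $A^{-1}$ exists and the manipulations are legitimate.

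The hard part will be the bookkeeping of the Cauchy--Binet collapse across multiple time-slices simultaneously, specifically ensuring that integrating out the $M$ particles at each unobserved slice produces exactly one composed transition function rather than spurious cross terms, and that the observed slices interlace correctly with the boundary coupling through $A^{-1}$. I expect the combinatorics of matching indices when the observation points are distributed over several distinct times (so that one works with a block matrix rather than a single $M\times M$ determinant) to be the most delicate step; getting the signs and the Schur-complement identity right is what turns the raw marginal into the clean determinantal kernel \eqref{generalK}.
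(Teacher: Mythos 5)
Your proposal is a correct plan, but it follows a genuinely different route from the one the paper relies on. The paper gives no proof of theorem \ref{productmeasurethm}; it defers to \cite{JoHouch}, where the argument runs through the generating-function characterization (\ref{corrfcnsdef}): one computes $\mathbb{E}\big[\prod_{r,j}(1+\psi_r(x_j^r))\big]$ by applying the Cauchy--Binet identity (\ref{CBAnd}) once per time slice, which collapses the whole product measure into a single ratio $\det\tilde A/\det A$ with $\tilde A$ built from transition functions modified by the factors $(1+\psi_r)$; expanding $\det(I+A^{-1}(\tilde A-A))$ and comparing term by term with the expansion (\ref{detprocexpansion}) then identifies the kernel (\ref{generalK}). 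You instead propose to compute the correlation functions $\rho_n$ directly as marginal densities and to extract the determinantal form by a Schur complement against the boundary matrix $A$. That route is also classical (it is essentially the Tracy--Widom/Eynard--Mehta computation) and does produce (\ref{generalK}), with the $-p_{r,s}$ term and the $A^{-1}$ coupling arising exactly as you describe. What the generating-function route buys is that one never has to integrate out \emph{some but not all} of the $M$ particles at a given time slice: every slice is either fully integrated (plain Cauchy--Binet) or fully weighted by $1+\psi_r$. In your route, an observed slice carrying $k_r<M$ marked points forces a partial integration of the remaining $M-k_r$ particles, with the combinatorial factor $M!/(M-k_r)!$ coming from the definition of $\rho_n$ (recall $\rho_n$ is the density of finding particles at the marked points, with the others unconstrained), and this partial Cauchy--Binet/Laplace-expansion step --- not the collapse of the fully unobserved slices, which is routine --- is where the real bookkeeping lives. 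You flag the multi-time block structure as the hard part but not this partial integration explicitly; it is the one step you should make sure to carry out carefully, since it is also where the normalization by $\det A$ and the Schur-complement identity have to mesh.
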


For the proof see e.g \cite{JoHouch}.

Although we have a formula for the correlation kernel, it is often difficult to find a useful formula for it since the inverse $A^{-1}$ may be hard to compute. We will
consider three cases where it is possible to rewrite the formula further into a form that may be more convenient for further analysis.

\subsubsection{Using Cramer's rule} \label{sec:2.2.1}
Assume that there is a linear operator $T_v$, $v\in X$, acting on the variable $v$, such that
\begin{equation*}
p_{L,s}(x_i^L,v)=T_vp_{L,R}(x_i^L,g(s,v)),
\end{equation*}
where $g(s,v)$ is some function of $s,v$. Let $A[j,s,v]$ be the matrix $A$ in (\ref{matrixA}) with column $j$ replaced by
\begin{equation*}
\left(\begin{matrix}
  p_{L,R}(x_1^L,g(s,v))\\
  \vdots\\
   p_{L,R}(x_M^L,g(s,v))
 \end{matrix}\right).
 \end{equation*}
Then, by Cramer's rule,
\begin{equation}\label{CramersKtilde}
\tilde{K}(r,u;s,v)=T_v\sum_{j=1}^N p_{r,R}(u,x_j^R)\frac{\det A[j,g(s,v)]}{\det A}.
\end{equation}
Whether this is useful depends of course on whether we can find a good operator $T_v$ and compute the determinants in some
good form. Recall from above that $\det A$ equals the partition function and in many models that are of interest the partition function
actually has some nice form which indicates that this approach may be useful. The idea behind the operator $T_v$ is that the determinant $\det A[j,s,v]$
should have the same form as $\det A$ so that it also can be computed. We will give an application to interlacing particle systems in section \ref{sec:5}.

\subsubsection{Infinite Toeplitz matrix} \label{sec:2.2.2}

Consider the case when $X=X_L=X_R=\mathbb{Z}$, $x_j^L=x_j^R=d-j$, $1\le j\le M$, and the transition functions are given by
\begin{equation}\label{prphir}
p_{r,r+1}(x,y)=\hat{\phi}_r(y-x),
\end{equation}
for $x,y\in\mathbb{Z}$, and $\hat{\phi}_r(k)$ is the $k$:th Fourier coefficient of the complex-valued function function $\phi_r(z)$ on $\mathbb{T}$, the unit circle in the complex plane. This situation occurs in particular for Schur processes, see section \ref{sec4.3}.
We assume that the functions $\phi_r$, $L\le r<R$, satisfy:

\begin{condition}\label{WH}
\item There is an $\epsilon>0$ such that $\phi_r$ has a Wiener-Hopf factorization $\phi_r(z)=\phi_r^+(z)\phi_r^-(z)$, $z\in\mathbb{T}$, where $\phi_r^+$ is analytic and non-zero in
$|z|<1+\epsilon$, and  $\phi_r^-$ is analytic and non-zero in
$|z|>1-\epsilon$ including at infinity. Also assume that we have normalized so that $\phi^+(0)=\phi^-(\infty)=1$
\end{condition}

For more on Wiener-Hopf factorizations see e.g. \cite{Boetch}. 

For $L\le r<s<R$, we write
\begin{equation}\label{phirs}
\phi_{r,s}(z)=\phi_r(z)\phi_{r+1}(z)\cdots\phi_{s-1}(z).
\end{equation}
Note that $\phi_{r,s}$ has a Wiener-Hopf factorization, $\phi_{r,s}=\phi_{r,s}^+(z)\phi_{r,s}^-(z)$
\begin{equation}\label{phirspm}
\phi_{r,s}^\pm(z)=\phi_r^\pm(z)\phi_{r+1}^\pm(z)\cdots\phi_{s-1}^\pm(z).
\end{equation}
Then,
\begin{equation}\label{prsintformula}
p_{r,s}(x,y)=\mathbb{I}_{r<s}\hat{\phi}_{r,s}(y-x)=\frac{\mathbb{I}_{r<s}}{2\pi\mathrm{i}}\int_{\gamma_1}z^{x-y}\phi_{r,s}(z)\frac{dz}z.
\end{equation}
We see from (\ref{matrixA}) that 
\begin{equation}\label{matrixAToeplitz}
A=(\hat{\phi}_{L,R}(j-k))_{1\le j,k\le M}\doteq T_M(\phi_{L,R}),
\end{equation}
is a size $M$ Toeplitz matrix with symbol $\phi_{L,R}$.

In some situations, as we will see in section \ref{sec:4}, we can take the limit $M\to\infty$ in the kernel (\ref{generalK}) and get a limiting kernel
\begin{equation}\label{KLR}
K_{L,R}(r,u;s,v)=-p_{r,s}(u,v)+\sum_{i,j=1}^\infty\hat{\phi}_{r,R}(d-j-u)T(\phi_{L,R})^{-1}_{ji}\hat{\phi}_{L,s}(v+i-d),
\end{equation}
where $T(\phi_{L,R})^{-1}$ is the infinite Toeplitz matrix with symbol $\phi_{L,R}$,
\begin{equation*}
T(\phi_{L,R})=(\hat{\phi}_{L,R}(j-k))_{1\le j,k<\infty}.
\end{equation*}
The good thing about (\ref{KLR}) is that since we have the Wiener-Hopf factorization $\phi_{r,s}=\phi_{r,s}^+\phi_{r,s}^-$, the following formula holds,
\begin{equation}\label{Toeplitzinverse}
T(\phi_{L,R})^{-1}=T(\frac 1{\phi_{L,R}^+})T(\frac 1{\phi_{L,R}^-}),
\end{equation}
see e.g. \cite{Boetch}.
Our analyticity assumptions imply that all Fourier coefficients decay exponentially and from this it is not so hard to see that the infinite sum in (\ref{KLR}) converges.
In this case we can get a formula for the kernel that is useful for asymptotic analysis.

\begin{thma}\label{InfiniteToeplitzcasekernel}
We have the following contour integral expression for the kernel (\ref{KLR}),
\begin{equation}\label{KLRcontour}
K_{L,R}(r,u;s,v)=-\frac {\mathbb{I}_{r<s}}{2\pi\mathrm{i}}\int_{\gamma_1}z^{u-v}\phi_{r,s}(z)\frac{dz}z+\frac 1{(2\pi \mathrm{i})^2}\int_{\gamma_{\rho_1}}dz\int_{\gamma_{\rho_2}}dw
\frac{z^{u-d}}{w^{v-d+1}(w-z)}\frac{\phi^-_{r,R}(z)\phi^+_{L,s}(w)}{\phi^-_{s,R}(w)\phi^+_{L,r}(z)},
\end{equation}
where $1-\epsilon<\rho_1<\rho_2<1+\epsilon$,
where $\epsilon$ is as in condition \ref{WH} above.
\end{thma}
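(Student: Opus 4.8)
The plan is to handle the two terms of (\ref{KLR}) separately. The first term $-p_{r,s}(u,v)$ is already of the required shape: formula (\ref{prsintformula}) gives $-p_{r,s}(u,v)=-\frac{\mathbb{I}_{r<s}}{2\pi\mathrm{i}}\int_{\gamma_1}z^{u-v}\phi_{r,s}(z)\,\frac{dz}{z}$, which is exactly the first integral in (\ref{KLRcontour}). Hence everything reduces to identifying the double sum in (\ref{KLR}) with the double contour integral in (\ref{KLRcontour}).

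For the double sum, the first step is to insert the Wiener--Hopf factorization (\ref{Toeplitzinverse}) of the inverse Toeplitz matrix, so that $T(\phi_{L,R})^{-1}_{ji}=\sum_{k\ge1}\hat{a}^+(j-k)\,\hat{a}^-(k-i)$, where $a^\pm=1/\phi_{L,R}^\pm$. Condition \ref{WH} is used here in two ways: $a^+$ is analytic in $|z|<1+\epsilon$, so $\hat{a}^+(m)=0$ for $m<0$, while $a^-$ is analytic in $|z|>1-\epsilon$ with $a^-(\infty)=1$, so $\hat{a}^-(m)=0$ for $m>0$. I would then write each of the four Fourier coefficients $\hat\phi_{r,R}$, $\hat{a}^+$, $\hat{a}^-$, $\hat\phi_{L,s}$ as a contour integral in its own variable $z,w,w',z'$, each along a circle inside the annulus $1-\epsilon<|\cdot|<1+\epsilon$. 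After interchanging the summations with the integrals, the sums over $j$, $i$ and $k$ turn into three geometric series; these converge precisely when the radii are nested as $|z|<|w|<|w'|<|z'|$, and they produce the rational factors $\frac{z}{w-z}$, $\frac{w'}{z'-w'}$ and $\frac{1}{w'(w'-w)}$.

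Next I would integrate out the two auxiliary variables $w$ and $w'$ by residues, the direction of closing being forced by the analyticity of $a^\pm$. Since $a^+$ is analytic inside the disk, the only singularity of the $w$-integrand inside its contour is the simple pole at $w=z$, and closing inward gives the factor $\frac{1}{\phi_{L,R}^+(z)(w'-z)}$. Since $a^-$ is analytic outside the disk and $a^-(\infty)=1$, the $w'$-integrand decays like $(w')^{-2}$; closing the $w'$-contour outward therefore picks up only the pole at $w'=z'$ and yields $\frac{1}{\phi_{L,R}^-(z')(z'-z)}$. What remains is a double integral in $z$ and $z'$ whose integrand simplifies by the multiplicativity $\phi_{L,R}^\pm=\phi_{L,r}^\pm\phi_{r,R}^\pm=\phi_{L,s}^\pm\phi_{s,R}^\pm$ of the factors, namely $\phi_{r,R}/\phi_{L,R}^+=\phi_{r,R}^-/\phi_{L,r}^+$ and $\phi_{L,s}/\phi_{L,R}^-=\phi_{L,s}^+/\phi_{s,R}^-$. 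Relabelling $z'\to w$ and setting $\rho_1=|z|<\rho_2=|w|$ then gives precisely the second term of (\ref{KLRcontour}). Note that this part of the argument never uses the order of $r$ and $s$, so only the first term carries the indicator $\mathbb{I}_{r<s}$.

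The main obstacle is the bookkeeping rather than any isolated difficulty. One must place all four radii simultaneously in the common annulus $(1-\epsilon,1+\epsilon)$ while respecting the strict ordering $|z|<|w|<|w'|<|z'|$ needed for the three geometric series, and one must justify exchanging the triple summation with the fourfold integral; both rest on the exponential decay of every Fourier coefficient, which is guaranteed by the analyticity in Condition \ref{WH}. The other point demanding care is the choice of closing direction for the $w$- and $w'$-contours: it is dictated by where $\phi_{L,R}^+$ and $\phi_{L,R}^-$ are analytic, and the normalization $\phi^-(\infty)=1$ is exactly what makes the $w'$-integrand decay fast enough to close outward.
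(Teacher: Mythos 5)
Your argument is correct and follows essentially the same route as the paper: split off $-p_{r,s}$ via (\ref{prsintformula}), insert the Wiener--Hopf factorization (\ref{Toeplitzinverse}) of $T(\phi_{L,R})^{-1}$, turn the sums into geometric series under a nesting of the radii, and cancel Wiener--Hopf factors at the end. The only (cosmetic) difference is that you represent the Fourier coefficients of $1/\phi_{L,R}^{\pm}$ by two extra contour integrals and then remove them again by residues, whereas the paper's computation (\ref{InfToeplitzgen}) sums those Fourier series directly to produce $1/\phi_{L,R}^{+}(z)$ and $1/\phi_{L,R}^{-}(w)$.
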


\begin{proof}  It follows from (\ref{prsintformula}) and analyticity that we can use Cauchy's theorem to deform the contours and get
\begin{align}\label{kerneltoeplitzcomp}
&\sum_{i,j=1}^\infty\hat{\phi}_{r,R}(d-j-u)T(\phi_{L,R})^{-1}_{ji}\hat{\phi}_{L,s}(v+i-d)\\
&=\frac 1{(2\pi \mathrm{i})^2}\int_{\gamma_{\rho_1}}dz\int_{\gamma_{\rho_2}}dw\phi_{r,R}(z)\phi_{L,s}(w)\frac{z^{u-d-1}}{w^{v-d+1}}
\left(\sum_{i,j=1}^\infty z^jT(\phi_{L,R})^{-1}_{ji}w^{-i}\right).\notag
\end{align}
Using (\ref{Toeplitzinverse}) we obtain
\begin{align}\label{InfToeplitzgen}
\sum_{i,j=1}^\infty z^jT(\phi_{L,R})^{-1}_{ji}w^{-i}&=\sum_{i,j=1}^\infty z^j\left(\sum_{k=1}^\infty \widehat{\bigg(\frac 1{\phi_{L,R}^+}\bigg)}_{j-k}
\widehat{\bigg(\frac 1{\phi_{L,R}^-}\bigg)}_{k-i}\right)w^{-i}\\
&=\sum_{k=1}^\infty\left(\sum_{j=1}^\infty\widehat{\bigg(\frac 1{\phi_{L,R}^+}\bigg)}_{j-k}z^{j-k}\right)\left(\sum_{i=1}^\infty\widehat{\bigg(\frac 1{\phi_{L,R}^-}\bigg)}_{j-k}w^{k-i}\right)
\left(\frac zw\right)^k\notag\\
&=\frac z{w-z}\frac 1{\phi_{L,R}^+(z)\phi_{L,R}^-(w)},\notag
\end{align}
since $|z/w|<1$, and by the fact that $\widehat{\big(\frac 1{\phi_{L,R}^+}\big)}_{k}=0$ for $k<0$, and
$\widehat{\big(\frac 1{\phi_{L,R}^-}\big)}_{k}=0$ for $k>0$. Inserting this into the right side of (\ref{kerneltoeplitzcomp}) and cancelling factors in the Wiener-Hopf factorization proves the theorem.
\end{proof}

\subsubsection{Finite Toeplitz matrix} \label{sec:2.2.3}

We consider the same setting as in the previous section for the case of an infinite Toeplitz matrix. If we cannot take the limit $M\to\infty$ in (\ref{generalK}), then we have to deal with 
the inversion of a finite Toeplitz matrix. We will encounter this case below in the context of the random tiling model called the Double Aztec diamond, see section \ref{sec4.5}
The correlation kernel is then given by
\begin{equation}\label{KfiniteMkernel}
K_{L,R,M}(r,u;s,v)=-p_{r,s}(u,v)+\tilde{K}_{L,R,M}(r,u;s,v),
\end{equation}
where $p_{r,s}$ is given by (\ref{prsintformula}), and
\begin{equation}\label{finiteMkernel}
\tilde{K}_{L,R,M}(r,u;s,v)=\sum_{i,j=1}^M\hat{\phi}_{r,R}(d-j-u)T_M(\phi_{L,R})^{-1}_{ji}\hat{\phi}_{L,s}(v+i-d).
\end{equation}
If $f:\mathbb{T}\mapsto\mathbb{C}$ is a symbol we denote the size $n$ Toeplitz determinant with symbol $f$ by
\begin{equation*}
D_n[f(\zeta)]=\det T_n(f)=\det\left(\frac 1{2\pi \mathrm{i}}\int_{\gamma_1}\zeta^{k-j}f(\zeta)\frac{d\zeta}{\zeta}\right)_{1\le j,k\le n}.
\end{equation*}
The following proposition is the key result to rewrite (\ref{finiteMkernel}) in a good way.

\begin{prop}\label{prop:finiteToeplitz}
Let $f\in L^1(\mathbb{T})$ and $z,w\in\mathbb{C}$. Then
\begin{equation}\label{FiniteToeplitzinverse}
\sum_{i,j=1}^n z^jT_n(f)^{-1}_{ji}w^{-i}=\frac zw\frac{D_{n-1}\big[(1-\zeta/w)(1-z/\zeta)f(\zeta)\big]}{D_n[f(\zeta)]}.
\end{equation}
\end{prop}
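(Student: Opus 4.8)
The plan is to reduce the double sum to a ratio of two honest Toeplitz determinants via a single bordered determinant. Introduce the column vectors $\mathbf{u}=(z^j)_{j=1}^n$ and $\mathbf{v}=(w^{-i})_{i=1}^n$, so that the left-hand side is exactly the bilinear form $\mathbf{u}^{\mathsf T}T_n(f)^{-1}\mathbf{v}=\sum_{i,j=1}^n z^j (T_n(f)^{-1})_{ji} w^{-i}$. Writing $A=T_n(f)$ and applying the Schur-complement identity to the $(n+1)\times(n+1)$ bordered matrix
$$B=\begin{pmatrix} A & \mathbf v\\ \mathbf u^{\mathsf T} & 0\end{pmatrix},$$
namely $\det B=-\det(A)\,\mathbf u^{\mathsf T}A^{-1}\mathbf v$, reduces the claim to proving the purely algebraic determinant identity $\det B=-\tfrac{z}{w}\,D_{n-1}[g]$ with $g(\zeta)=(1-\zeta/w)(1-z/\zeta)f(\zeta)$. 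Everything here depends only on the Fourier coefficients $\hat f(m)$, so $f\in L^1(\mathbb T)$ suffices; the invertibility of $A$ (equivalently $D_n[f]\neq 0$) is needed only to pass between $\det B$ and the bilinear form, while the determinant identity itself is an identity of polynomials in the $\hat f(m)$.

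The heart of the argument is to transform $B$ into $T_{n-1}(g)$ by two successive unimodular operations, each introducing one linear factor of $g$ and each collapsing one border line. First I would apply the column operation $c_k\mapsto c_k-z\,c_{k-1}$ for $k=n,\dots,2$, an upper-bidiagonal (hence determinant-preserving) transformation acting on the original columns. On the Toeplitz block the entry $\hat f(j-k)$ becomes $\hat f(j-k)-z\hat f(j-k+1)=\widehat{(1-z/\zeta)f}(j-k)$, so the factor $(1-z/\zeta)$ is produced, while in the border row the entries $z^k$ telescope to $0$ for $k\ge 2$, leaving $(z,0,\dots,0)$. A Laplace expansion along this row then contributes $(-1)^{n}z$ and deletes the first column. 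Next I would apply the row operation $r_j\mapsto r_j-\tfrac1w r_{j-1}$ for $j=n,\dots,2$ to the resulting $n\times n$ matrix; the block entries pick up the further factor $(1-\zeta/w)$, becoming Fourier coefficients $\hat g(\,\cdot\,)$ of the full symbol $g$, while the border column $w^{-j}$ telescopes to $0$ for $j\ge 2$, leaving $(w^{-1},0,\dots,0)^{\mathsf T}$. A second Laplace expansion contributes $(-1)^{n+1}w^{-1}$ and deletes the first row, and after the relabellings $k\mapsto k-1$, $j\mapsto j-1$ the surviving minor is exactly $T_{n-1}(g)$.

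Collecting the two signs and prefactors gives $\det B=(-1)^{n}z\cdot(-1)^{n+1}w^{-1}D_{n-1}[g]=-\tfrac{z}{w}D_{n-1}[g]$, and dividing by $-D_n[f]$ yields the stated formula. The genuinely delicate points, and where I expect the bookkeeping to demand care, are (i) confirming that the two bidiagonal transformations are simultaneous unimodular operations on the original rows and columns, so that no iterated corrections creep in; and (ii) keeping the Laplace-expansion signs and the index relabellings straight, so that the surviving minors really are Toeplitz matrices of $g$ rather than shifted variants. The structural fact that makes everything fit is that the boundary column and boundary row pushed "off the edge" by the shifts are precisely the ones eliminated by the two expansions, which is why the size drops by two in exact correspondence with the two linear factors of $g$.
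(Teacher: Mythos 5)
Your argument is correct, and it takes a genuinely different route from the one in the paper. The paper starts from the cofactor formula $(T_n(f)^{-1})_{ji}=(-1)^{i+j}\det T_n(f)^{(ij)}/\det T_n(f)$, represents each minor $\det T_n(f)^{(ij)}$ as an $(n-1)$-fold integral via the Andrieff identity (\ref{CBAnd}) with the integrand containing $e_{n-i}(1/\zeta_1,\dots,1/\zeta_{n-1})\,e_{n-j}(\zeta_1,\dots,\zeta_{n-1})\,|\Delta_{n-1}(\zeta)|^2$, sums over $i,j$ using the generating function (\ref{elsymmpol}) of the elementary symmetric polynomials to produce the factors $\prod_m(z-\zeta_m)(1/w-1/\zeta_m)$, and then reads the result off as $D_{n-1}$ of the modified symbol by a second application of the Andrieff identity. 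Your bordered-determinant argument replaces all of this symmetric-function machinery with pure linear algebra: the Schur-complement identity $\det B=-\det(A)\,\mathbf u^{\mathsf T}A^{-1}\mathbf v$ converts the bilinear form into a single determinant, and the two unit-bidiagonal operations (performed in the order $k=n,\dots,2$, so that each new column/row is a combination of \emph{original} ones and the transformation is genuinely unimodular) each graft one linear factor onto the symbol while telescoping one border line to a single entry, so that the two Laplace expansions drop the size by exactly two and leave $T_{n-1}\big[(1-\zeta/w)(1-z/\zeta)f\big]$; the signs $(-1)^n z$ and $(-1)^{n+1}w^{-1}$ combine to the required $-z/w$. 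I checked the index bookkeeping: with $T_n(f)_{jk}=\hat f(j-k)$ one has $\hat f(j-k)-z\hat f(j-k+1)=\widehat{(1-z/\zeta)f}(j-k)$ and $\hat h(j-k)-w^{-1}\hat h(j-k-1)=\widehat{(1-\zeta/w)h}(j-k)$, exactly as you claim. What each approach buys: the paper's multiple-integral route sits naturally beside the Heine-type identities and orthogonal-polynomial structure used elsewhere in the text, while yours is shorter, self-contained, and makes transparent the mechanism whereby each linear factor of the new symbol corresponds to one shift that collapses one border line. Both proofs require $D_n[f]\neq 0$ (and tacitly $w\neq 0$), which is already implicit in the statement.
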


\begin{proof}
If $A$ is a matrix and $A^{(ij)}$ denotes the matrix we obtain when we erase row $i$ and column $j$, then
\begin{equation*}
(A^{-1})_{ji}=(-1)^{i+j}\frac{\det A^{(ij)}}{\det A}.
\end{equation*}
Now,
\begin{equation*}
\det T_n(f)^{(ij)}=\det\left(\frac 1{2\pi \mathrm{i}}\int_{\gamma_1}\psi_{\ell}^{(i)}\big(\frac 1{\zeta}\big)\psi_m^{(j)}(\zeta)f(\zeta)\frac{d\zeta}{\zeta}\right)_{1\le \ell,m<n},
\end{equation*}
where
\begin{equation*}
\psi_m^{(j)}(\zeta)=
\begin{cases} \zeta^m &,1\le m<j\\ \zeta^{m+1} &,j\le m<n. \end{cases}
\end{equation*}
Note that, by (\ref{ClassicalSchur}) and (\ref{JacobiTrudi2}),
\begin{equation*}
\frac{\det\big(\psi_{\ell}^{(j)}(\zeta_m)\big)_{1\le \ell,m<n}}{\Delta_{n-1}(\zeta)}=s_{1^{n-j}}(\zeta_1,\dots,\zeta_{n-1})=e_{n-j}(\zeta_1,\dots,\zeta_{n-1}).
\end{equation*}
Using the Andrieff identity (\ref{CBAnd}) we find
\begin{align*}
\det T_n(f)^{(ij)}&=\frac 1{(2\pi \mathrm{i})^{n-1}(n-1)!}\int_{\gamma_1^{n-1}}\det\left(\psi_{\ell}^{(i)}\big(\frac 1{\zeta_m}\big)\right)
\det\left(\psi_{\ell}^{(j)}(\zeta_m)\right)\prod_{m=1}^{n-1}f(\zeta_m)\frac{d\zeta_m}{\zeta_m}\\
&=\frac 1{(2\pi \mathrm{i})^{n-1}(n-1)!}\int_{\gamma_1^{n-1}}e_{n-i}\big(\frac 1{\zeta_1},\dots,\frac 1{\zeta_{n-1}}\big)e_{n-j}(\zeta_1,\dots,\zeta_{n-1})
\big|\Delta_{n-1}(\zeta)\big|^2\prod_{m=1}^{n-1}f(\zeta_m)\frac{d\zeta_m}{\zeta_m}.
\end{align*}
Hence, by (\ref{elsymmpol})
\begin{align*}
&\sum_{i,j=1}^n z^j(-1)^{i+j}T_n(f)^{(ij)}w^{-i}\\
&=\frac 1{(2\pi \mathrm{i})^{n-1}(n-1)!}\int_{\gamma_1^{n-1}}\left(\sum_{i=1}^n\bigg(-\frac 1w\bigg)^i e_{n-i}\big(\frac 1{\zeta_1},\dots,\frac 1{\zeta_{n-1}}\big)\right)\\
&\times\left(\sum_{j=1}^n(-z)^je_{n-j}(\zeta_1,\dots,\zeta_{n-1})\right)\big|\Delta_{n-1}(\zeta)\big|^2\prod_{m=1}^{n-1}f(\zeta_m)\frac{d\zeta_m}{\zeta_m}\\
&=\frac zw\frac 1{(2\pi \mathrm{i})^{n-1}(n-1)!}\int_{\gamma_1^{n-1}}\big|\Delta_{n-1}(\zeta)\big|^2
\prod_{m=1}^{n-1}(z-\zeta_m)\big(\frac 1w-\frac 1{\zeta_m}\big)f(\zeta_m)\frac{d\zeta_m}{\zeta_m}\\
&=\frac zw D_{n-1}\big[(1-\zeta/w)(1-z/\zeta)f(\zeta)\big].
\end{align*}

\end{proof}

\begin{remark} {\rm Note that the expression in the right side of  (\ref{FiniteToeplitzinverse}) converges to the last expression in   (\ref{InfToeplitzgen})
as $M\to\infty$ by the strong Szeg\H{o} limit theorem, compare (\ref{etotheG}).}
\end{remark}

We want to rewrite the Toeplitz determinants in (\ref{FiniteToeplitzinverse}) as Fredholm determinants and for this we will use the Geronimo-Case/Borodin-Okounkov identity
which is given in the next proposition.
For a proof see \cite{BoOk} or \cite{BaWi}.

\begin{prop}\label{prop:BOformula}
Assume that $g:\mathbb{T}\to\mathbb{C}$ has a Wiener-Hopf factorization $g=g^+g^-$, where $g^+$ is analytic and non-zero in $|\zeta|<1$, and $g^-$ is analytic and
non-zero in $|\zeta|>1$. Also, assume that $g^+(0)=g^-(\infty)=1$.
Set
\begin{equation}\label{BOKjk}
\mathcal{K}(j,k)=\sum_{\ell=0}^\infty\widehat{\bigg(\frac{g^-}{g^+}\bigg)}_{j+\ell}\widehat{\bigg(\frac{g^+}{g^-}\bigg)}_{-k-\ell}
\end{equation}
and
\begin{equation}\label{BOG}
G=\sum_{j=1}^\infty j\big(\widehat{\log g}\big)_j\big(\widehat{\log g}\big)_{-j}.
\end{equation}
Then
\begin{equation}\label{BOformula}
D_n\big[g(\zeta)\big]= e^G \det(I-\mathcal{K})_{\bar{\ell}^2(n+1)},
\end{equation}
where $\bar{\ell}^2(n)=\ell^2\big(\{n,n+1,\dots\}\big)$.
\end{prop}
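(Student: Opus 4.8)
The plan is to recast the identity operator-theoretically on the Hardy space $H^2=\ell^2(\{0,1,2,\dots\})$. Write $T(f)$ and $H(f)$ for the Toeplitz and Hankel operators with symbol $f$ on $\mathbb{T}$, let $\widetilde f(\zeta)=f(1/\zeta)$, and let $P_m,Q_m$ be the orthogonal projections onto $\{0,\dots,m-1\}$ and $\{m,m+1,\dots\}$. Throughout I would use Widom's identity $T(fh)=T(f)T(h)+H(f)H(\widetilde h)$ together with the fact that $H(f)=0$ as soon as $f$ has only non-positive Fourier coefficients. Since $g^+$ is analytic in a disc of radius $>1$ and $g^-$ outside a disc of radius $<1$, all Fourier coefficients involved decay exponentially, so every Hankel product below is trace class and every Fredholm determinant is well defined.

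First I would carry out two reductions. The normalization $g^+(0)=g^-(\infty)=1$ forces $(\widehat{\log g})_0=0$, i.e.\ geometric mean $1$; the strong Szeg\H{o} limit theorem then gives $D_n[g]\to e^{G}$ as $n\to\infty$, and its operator form (Widom) is the exact identity $e^{G}=\det(T(g)T(1/g))$, where $T(g)T(1/g)=I-H(g)H(\widetilde{1/g})$. Next, writing $b=g^-/g^+$ and $c=g^+/g^-=1/b$, a direct matching of Fourier coefficients shows that $\mathcal K(j,k)=(H(b)H(\widetilde c))_{j-1,k-1}$, so that $\mathcal K$ is the compression to $\bar\ell^2(n+1)$ of the Hankel product $H(b)H(\widetilde c)$. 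Because $bc\equiv 1$, Widom's identity gives $H(b)H(\widetilde c)=I-T(b)T(b^{-1})$, hence $\det(I-\mathcal K)_{\bar\ell^2(n+1)}=\det\big(Q_nT(b)T(b^{-1})Q_n\big)$. The proposition is therefore equivalent to
\[
D_n[g]=e^{G}\,\det\big(Q_nT(b)T(b^{-1})Q_n\big).
\]

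The main step I would run through Jacobi's complementary-minor identity applied to finite sections. For every $N\ge n$ the top-left $n\times n$ block of the matrix $T_N(g)$ is exactly $T_n(g)$, so $D_n[g]=\det(P_nT_N(g)P_n)$. Jacobi's identity for the invertible matrix $T_N(g)$ states that its principal minor on $\{0,\dots,n-1\}$ equals $\det T_N(g)$ times the complementary principal minor of $T_N(g)^{-1}$ on $\{n,\dots,N-1\}$, i.e.
\[
\frac{D_n[g]}{D_N[g]}=\det\Big(T_N(g)^{-1}\big[\{n,\dots,N-1\}\big]\Big).
\]
Letting $N\to\infty$, the left side tends to $D_n[g]/e^{G}$ by the strong Szeg\H{o} theorem, so it remains to identify the limit of the right-hand determinant with $\det\big(Q_nT(b)T(b^{-1})Q_n\big)$. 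Here the Wiener-Hopf factorization enters through $T(g)=T(g^-)T(g^+)$ (legitimate since $H(g^-)=0$), whose factors are boundedly invertible with triangular inverses $T(g^\pm)^{-1}=T(1/g^\pm)$; this gives $T(g)^{-1}=T(1/g^+)T(1/g^-)$ and explicit control of the entries of $T_N(g)^{-1}$ near each edge.

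The hard part is exactly this last identification: one must show that the size-$(N-n)$ minor of $T_N(g)^{-1}$ converges, in the trace-class/Fredholm sense, to $\det\big(Q_nT(b)T(b^{-1})Q_n\big)$ — equivalently, that the corrections coming from the two ends of the finite Toeplitz matrix (the left edge contributing $T(g)^{-1}$ and the far edge a reflected block) assemble precisely into the Hankel product $H(b)H(\widetilde c)=I-T(b)T(b^{-1})$. This relies on the asymptotic theory of inverses of finite Toeplitz matrices and on uniform trace-norm bounds, for which the exponential decay of the Fourier coefficients coming from analyticity is indispensable. An alternative route that avoids the double-edge analysis is the Geronimo-Case argument via orthogonal polynomials on the unit circle: one verifies that $D_n[g]/D_{n-1}[g]$ coincides with the ratio of the Fredholm determinants $\det(I-\mathcal K)$ at consecutive indices (through the Szeg\H{o} recursion and its reflection coefficients), and then the common limits $D_n[g]\to e^{G}$ and $\det(I-\mathcal K)\to 1$ fix the proportionality constant to be $1$, yielding the stated formula.
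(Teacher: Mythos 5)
The paper does not actually prove Proposition \ref{prop:BOformula}; it states the Geronimo--Case/Borodin--Okounkov identity and refers to \cite{BoOk} and \cite{BaWi} for the proof, so the comparison below is with the Basor--Widom argument that the paper points to. Your preliminary reductions are all correct: with $b=g^-/g^+$ and $c=g^+/g^-$ one indeed has $\mathcal{K}(j,k)=\big(H(b)H(\widetilde{c})\big)_{j-1,k-1}$, Widom's product identity gives $H(b)H(\widetilde{c})=I-T(b)T(c)$ since $bc\equiv 1$, and Widom's formula $\det\big(T(g)T(1/g)\big)=e^{G}$ is valid under the stated analyticity; hence the proposition is equivalent to $D_n[g]=e^{G}\det\big(Q_nT(b)T(c)Q_n\big)$. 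The difficulty is that this equivalence is only a reformulation. The entire content of the theorem sits in the step you yourself label ``the hard part,'' and that step is not carried out. The limit as $N\to\infty$ of $\det\big(T_N(g)^{-1}[\{n,\dots,N-1\}]\big)$ is not a routine consequence of $T(g)^{-1}=T(1/g^+)T(1/g^-)$: the inverse of a finite Toeplitz matrix is \emph{not} the finite section of the inverse operator, the corrections coming from the two edges of the matrix are of the same order as the determinant you are computing, and identifying their assembled limit with $\det\big(Q_nT(b)T(c)Q_n\big)$ is essentially equivalent in difficulty to the Borodin--Okounkov identity itself. The alternative route through the Szeg\H{o} recursion is likewise only named, not executed. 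As written, the proposal is a correct translation of the statement into operator language together with a plan, not a proof.

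The standard way to close exactly this gap (Basor--Widom) is to apply the Jacobi complementary-minor identity once, at the operator level rather than to finite sections: for an invertible $A=I+(\text{trace class})$ one has $\det\big(P_nAP_n+Q_n\big)=\det A\cdot\det\big(Q_nA^{-1}Q_n+P_n\big)$, and applying this to a suitable product of Toeplitz operators built from $g^{\pm}$, combined with the exact factorizations $T(g)=T(g^-)T(g^+)$, $T(b)=T(g^-)T(1/g^+)$ and $T(c)=T(1/g^-)T(g^+)$ (each valid because the relevant Hankel operator vanishes), yields the identity in a few lines with no finite-$N$ edge analysis at all. If you insist on your finite-$N$ route, you would need the exact Gohberg--Sementsul/Widom formula for $T_N(g)^{-1}$ together with uniform trace-norm control of \emph{both} edge corrections; this is doable under your analyticity hypotheses but is substantially heavier than the operator argument, and until it is supplied the proof is incomplete.
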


Before we can give a formula for the kernel (\ref{finiteMkernel}) we have to intoduce some notation. Choose the radii $\rho_1,\rho_2, \rho_3,\sigma_1, \sigma_2$ so that
\begin{equation}\label{radii1}
1-\epsilon<\rho_3<\rho_1<\sigma_1<\sigma_2<\rho_2<1+\epsilon,
\end{equation}
where $\epsilon$ is the same as that in condition \ref{WH} above. Assuming that the radii satisfy (\ref{radii1}), we define
\begin{equation}\label{Kzero}
\mathcal{K}_0(j,k)=\frac 1{(2\pi \mathrm{i})^2}\int_{\gamma_{\sigma_1}}d\omega\int_{\gamma_{\sigma_2}}d\zeta \frac{\omega^k}{\zeta^{j+1}(\zeta-\omega)}
\frac{\phi_{L,R}^+(\omega)\phi_{L,R}^-(\zeta)}{\phi_{L,R}^-(\omega)\phi_{L,R}^+(\zeta)},
\end{equation}
and
\begin{align}\label{abformulas}
a_{s,v}(j)&=\frac 1{(2\pi \mathrm{i})^2}\int_{\gamma_{\rho_2}}dw\int_{\gamma_{\sigma_2}}d\zeta \frac{w^{d-v-1}}{\zeta^{j+1}(\zeta-w)}
\frac{\phi_{L,s}^+(w)\phi_{L,R}^-(\zeta)}{\phi_{s,R}^-(w)\phi_{L,R}^+(\zeta)}\\
b_{r,u}(k)&=\frac 1{(2\pi \mathrm{i})^2}\int_{\gamma_{\rho_1}}dz\int_{\gamma_{\sigma_1}}d\omega \frac{\omega^k}{z^{d-u}(\omega-z)}
\frac{\phi_{r,R}^-(z)\phi_{L,R}^+(\omega)}{\phi_{L,r}^+(z)\phi_{L,R}^-(\omega)}.\notag
\end{align}
Furthermore, we define
\begin{equation}\label{MLR}
M_{L,R}(r,u;s,v)=\frac 1{(2\pi \mathrm{i})^2}\int_{\gamma_{\rho_1}}dz\int_{\gamma_{\rho_2}}dw \frac{z^{u-d}}{w^{v-d+1}(w-z)}
\frac{\phi_{r,R}^-(z)\phi_{L,s}^+(w)}{\phi_{L,r}^+(z)\phi_{s,R}^-(w)},
\end{equation}
and
\begin{equation}\label{Mstar}
M_{L,R}^*(r,u;s,v)=\frac 1{(2\pi \mathrm{i})^2}\int_{\gamma_{\rho_1}}dz\int_{\gamma_{\rho_3}}dw \frac{z^{u-d}}{w^{v-d+1}(z-w)}
\frac{\phi_{r,R}^-(z)\phi_{L,s}^+(w)}{\phi_{L,r}^+(z)\phi_{s,R}^-(w)}.
\end{equation}

We can now state

\begin{thma}\label{thm:KMfinite} 
The kernel (\ref{KfiniteMkernel}) is given by
\begin{align}\label{KLRMfinal}
K_{L,R,M}(r,u;s,v)&=-\frac{\mathbb{I}_{r<s}}{2\pi \mathrm{i}}\int_{\gamma_1} z^{u-v}\phi_{r,s}(z)\frac{dz}z+M_{L,R}(r,u;s,v)\\
&-\sum_{k=M}^\infty((I-\mathcal{K}_0)_M^{-1}a_{s,v})(k)b_{r,u}(k).\notag
\end{align}
Here the notation $(I-\mathcal{K}_0)_M^{-1}$ means that we take the inverse on the space $\ell^2(\{M,M+1,\dots\})$. The correlation kernel for the dual particle system
as discussed at (\ref{holeprobFredholm}) in section \ref{sec:2.1} is given by
\begin{align}\label{dualKLRMfinal}
K_{L,R,M}^*(r,u;s,v)&=-\frac{\mathbb{I}_{s<r}}{2\pi \mathrm{i}}\int_{\gamma_1} \frac{z^{u-v}}{\phi_{s,r}(z)}\frac{dz}z+M_{L,R}^*(r,u;s,v)\\
&+\sum_{k=M}^\infty((I-\mathcal{K}_0)_M^{-1}a_{s,v})(k)b_{r,u}(k).\notag
\end{align}
\end{thma}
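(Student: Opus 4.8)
The plan is to reduce everything to the ratio of Toeplitz determinants produced by Proposition \ref{prop:finiteToeplitz}, convert these to Fredholm determinants by the Borodin--Okounkov identity of Proposition \ref{prop:BOformula}, and then extract a finite-rank perturbation whose resolvent produces the sum in (\ref{KLRMfinal}). First I would write the Fourier coefficients in (\ref{finiteMkernel}) as contour integrals, $\hat\phi_{r,R}(d-j-u)=\frac1{2\pi\mathrm i}\int_{\gamma_1}z^{u+j-d}\phi_{r,R}(z)\frac{dz}z$ and $\hat\phi_{L,s}(v+i-d)=\frac1{2\pi\mathrm i}\int_{\gamma_1}w^{d-v-i}\phi_{L,s}(w)\frac{dw}w$, so that $\tilde K_{L,R,M}$ becomes a double integral in $z,w$ whose integrand contains the inner sum $\sum_{i,j=1}^M z^jT_M(\phi_{L,R})^{-1}_{ji}w^{-i}$. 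Applying (\ref{FiniteToeplitzinverse}) with $f=\phi_{L,R}$ replaces this sum by $\frac zw\,D_{M-1}[g_{z,w}]/D_M[\phi_{L,R}]$, where $g_{z,w}(\zeta)=(1-\zeta/w)(1-z/\zeta)\phi_{L,R}(\zeta)$.

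Next I would apply Proposition \ref{prop:BOformula} to both Toeplitz determinants. The two elementary factors are already Wiener--Hopf normalized, so $g_{z,w}^+=(1-\zeta/w)\phi_{L,R}^+$ and $g_{z,w}^-=(1-z/\zeta)\phi_{L,R}^-$ for $z,w$ on contours with $|z|<1<|w|$. A short computation from $\log g_{z,w}=\log\phi_{L,R}+\log(1-\zeta/w)+\log(1-z/\zeta)$ using (\ref{BOG}) and the normalization $\phi_{L,R}^+(0)=\phi_{L,R}^-(\infty)=1$ gives the Szeg\H{o} constant ratio $e^{G[g_{z,w}]-G[\phi_{L,R}]}=\big((1-z/w)\phi_{L,R}^+(z)\phi_{L,R}^-(w)\big)^{-1}$; combined with the prefactor $z/w$ this leaves the clean factor $z/\big((w-z)\phi_{L,R}^+(z)\phi_{L,R}^-(w)\big)$ in front of the Fredholm ratio $\det(I-\mathcal K_{z,w})/\det(I-\mathcal K_0)$, where $\mathcal K_0$ is (\ref{Kzero}) and $\mathcal K_{z,w}$ is its analogue for $g_{z,w}$, the index-origin shift relating $D_{M-1}$ and $D_M$ being absorbed so that both determinants act on $\ell^2(\{M,M+1,\dots\})$.

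The heart of the argument is the Fredholm ratio. I would show that $\mathcal K_{z,w}$ differs from $\mathcal K_0$ by a finite-rank operator built from the two elementary factors: writing the kernel of $\mathcal K_{z,w}-\mathcal K_0$ as a double contour integral over $\gamma_{\sigma_1},\gamma_{\sigma_2}$, the extra symbol factor divided by $\zeta-\omega$ splits into a sum of products of a function of $\zeta$ and a function of $\omega$, i.e. into rank-one pieces carrying the poles at $\zeta=w$ and $\omega=z$. Feeding this through the determinant identity $\det(I-\mathcal K_0-|\alpha\rangle\langle\beta|)=\det(I-\mathcal K_0)\big(1-\langle\beta,(I-\mathcal K_0)^{-1}\alpha\rangle\big)$ turns the Fredholm ratio into $1$ minus a bilinear form in $(I-\mathcal K_0)^{-1}$. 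Integrating the constant $1$ against the prefactor over $z\in\gamma_{\rho_1}$, $w\in\gamma_{\rho_2}$ and simplifying with $\phi_{r,R}/\phi_{L,R}^+=\phi_{r,R}^-/\phi_{L,r}^+$ and $\phi_{L,s}/\phi_{L,R}^-=\phi_{L,s}^+/\phi_{s,R}^-$ reproduces exactly $M_{L,R}(r,u;s,v)$ of (\ref{MLR}); integrating the bilinear part, the $z$-dependence assembles into $b_{r,u}$ and the $w$-dependence into $a_{s,v}$ of (\ref{abformulas}), giving the correction $-\sum_{k\ge M}((I-\mathcal K_0)_M^{-1}a_{s,v})(k)b_{r,u}(k)$. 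Together with the $-p_{r,s}$ term of (\ref{KfiniteMkernel}), written via (\ref{prsintformula}) as the first integral in (\ref{KLRMfinal}), this proves the formula.

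I expect the main obstacle to be precisely this finite-rank extraction: separating the entangled $z$- and $w$-dependence so that the residues line up into a single resolvent term rather than a higher-rank expression, with part of the finite-rank structure absorbed into correctly matching the index origins so that numerator and denominator share the same $\mathcal K_0$ and the resolvent genuinely acts on $\ell^2(\{M,M+1,\dots\})$. Once the primal formula (\ref{KLRMfinal}) is established, the dual formula (\ref{dualKLRMfinal}) follows from the particle-hole duality $K\mapsto I-K$ of (\ref{holeprobFredholm}): this exchanges the roles of the two half-lines, flips $\mathbb I_{r<s}$ to $\mathbb I_{s<r}$, moves the $w$-contour across the pole at $w=z$ so that $M_{L,R}$ is replaced by $M_{L,R}^*$ of (\ref{Mstar}), and reverses the sign of the resolvent term; alternatively one repeats the computation with $\phi_{L,R}$ replaced by $1/\phi_{L,R}$ and $r\leftrightarrow s$.
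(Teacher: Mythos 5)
Your proposal follows essentially the same route as the paper: Proposition \ref{prop:finiteToeplitz} to get the Toeplitz-determinant ratio, the Borodin--Okounkov identity with the Szeg\H{o} constant $\big((1-z/w)\phi_{L,R}^+(z)\phi_{L,R}^-(w)\big)^{-1}$ and the index shift $\mathcal{K}^{(0,\infty)}(j,k)=\mathcal{K}_0(j-1,k-1)$, the partial-fraction splitting of the perturbed kernel into $\mathcal{K}_0$ plus a rank-one piece, and the duality $K^*=\delta-K$ together with the contour move $M_{L,R}\mapsto M_{L,R}^*$ for the dual formula. The only cosmetic difference is that you expand the rank-one determinant directly and exchange the integrals, whereas the paper packages that step as the identity (\ref{Fredholmformula}) borrowed from \cite{JoNIBM}; the two are equivalent.
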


\begin{proof} It follows from (\ref{prsintformula}), (\ref{finiteMkernel}) and (\ref{FiniteToeplitzinverse}) that
\begin{align}\label{Ktildefinite}
\tilde{K}_{L,R,M}(r,u;s,v)&=\frac 1{(2\pi \mathrm{i})^2}\int_{\gamma_{\rho_1}}\frac{dz}z\int_{\gamma_{\rho_2}}\frac{dw}w \frac{z^{u-d}}{w^{v-d}}\phi_{r,R}(z)\phi_{L,s}(w)
\left(\sum_{i,j=1}^Mz^jT_M(\phi_{L,R})^{-1}_{ji}w^{-i}\right)\\
&=\frac 1{(2\pi \mathrm{i})^2}\int_{\gamma_{\rho_1}}\frac{dz}z\int_{\gamma_{\rho_2}}\frac{dw}w \frac{z^{u+1-d}}{w^{v+1-d}}\phi_{r,R}(z)\phi_{L,s}(w)
\frac{D_{M-1}\big[(1-\zeta/w)(1-z/\zeta)\phi_{L,R}(\zeta)\big]}{D_M[\phi_{L,R}(\zeta)]}.\notag
\end{align}
We now rewrite the Toeplitz determinant in the numerator in the right side of (\ref{Ktildefinite}) using the identity (\ref{BOformula}). When we have the symbol
$(1-\zeta/w)(1-z/\zeta)\phi_{L,R}(\zeta)$,  the kernel (\ref{BOKjk}) becomes
\begin{equation}\label{Kzw}
\mathcal{K}^{(z,w)}(j,k)=\frac 1{(2\pi \mathrm{i})^2}\int_{\gamma_{\sigma_1}}d\omega\int_{\gamma_{\sigma_2}}d\zeta \frac{\omega^k}{\zeta^{j+1}(\zeta-\omega)}
\frac{\phi_{L,R}^+(\omega)\phi_{L,R}^-(\zeta)}{\phi_{L,R}^-(\omega)\phi_{L,R}^+(\zeta)}\frac{(\zeta-z)(\omega-w)}{(\zeta-w)(\omega-z)}.
\end{equation}
With the same symbol we denote the quantity (\ref{BOG}) by $G(z,w)$. A computation gives
\begin{equation}\label{etotheG}
e^{G(z,w)}=\frac 1{(1-z/w)\phi_{L,R}^-(w)\phi_{L,R}^+(z)}e^{G_0},
\end{equation}
where
\begin{equation*}
G_0=\sum_{j=1}^\infty j\big(\widehat{\log \phi_{L,R}}\big)_j\big(\widehat{\log \phi_{L,R}}\big)_{-j}.
\end{equation*}
Note that $G(0,\infty)=G_0$ and that if we set $z=0, w=\infty$ in (\ref{Kzw}) we get
\begin{equation*}
\mathcal{K}^{(0,\infty)}(j,k)=\mathcal{K}_0(j-1,k-1).
\end{equation*}
It follows from this, proposition \ref{prop:BOformula}, (\ref{Kzw}) and (\ref{etotheG}) that
\begin{align*}
\frac{D_{M-1}\big[(1-\zeta/w)(1-z/\zeta)\phi_{L,R}(\zeta)\big]}{D_M[\phi_{L,R}(\zeta)]}&=
\frac 1{(1-z/w)\phi_{L,R}^-(w)\phi_{L,R}^+(z)}\frac{\det\big(I-\mathcal{K}^{(z,w)}\big)_{\bar{\ell}^2(M)}}{\det\big(I-\mathcal{K}^{(0,\infty)}\big)_{\bar{\ell}^2(M+1)}}\\
&=\frac 1{(1-z/w)\phi_{L,R}^-(w)\phi_{L,R}^+(z)}\frac{\det\big(I-\mathcal{K}^{(z,w)}\big)_{\bar{\ell}^2(M)}}{\det\big(I-\mathcal{K}_0\big)_{\bar{\ell}^2(M)}}.
\end{align*}
We can insert this into (\ref{Ktildefinite}) and obtain
\begin{align}\label{KtildeFredholm}
\tilde{K}_{L,R,M}(r,u;s,v)&=\frac 1{(2\pi \mathrm{i})^2}\int_{\gamma_{\rho_1}}dz\int_{\gamma_{\rho_2}}dw\frac{z^{u-d}}{w^{v+1-d}(w-z)}
\frac{\phi_{r,R}^-(z)\phi_{L,s}^+(w)}{\phi_{L,r}^-(z)\phi_{s,R}^-(w)}\\
&\times\frac{\det\big(I-\mathcal{K}^{(z,w)}\big)_{\bar{\ell}^2(M)}}{\det\big(I-\mathcal{K}_0\big)_{\bar{\ell}^2(M)}}.
\end{align}
We can rewrite this formula further by noticing that
\begin{equation*}
\frac 1{\zeta-\omega}\frac{(\zeta-z)(\omega-w)}{(\zeta-w)(\omega-z)}=\frac 1{\zeta-\omega}-\frac{(w-z)}{(\zeta-w)(\omega-z)}.
\end{equation*}
Hence, if we define
\begin{align*}
c_1^w(j)&=\frac 1{2\pi \mathrm{i}}\int_{\gamma_{\sigma_2}}d\zeta \frac 1{\zeta^{j+1}(\zeta-w)}\frac{\phi_{L,R}^-(\zeta)}{\phi_{L,R}^+(\zeta)},\\
c_2^z(k)&=\frac 1{2\pi \mathrm{i}}\int_{\gamma_{\sigma_1}}d\omega \frac {\omega^k}{\omega-z}\frac{\phi_{L,R}^+(\omega)}{\phi_{L,R}^-(\omega)},
\end{align*}
we see that
\begin{equation*}
\mathcal{K}^{(z,w)}(j,k)=\mathcal{K}_0(j,k)-(w-z)c_1^w(j)c_2^z(k).
\end{equation*}
Thus
\begin{equation}\label{Kzwformula}
\det\big(I-\mathcal{K}^{(z,w)}\big)_{\bar{\ell}^2(M)}=\det\big(I-\mathcal{K}_0-(w-z)c_1^w\otimes c_2^z\big)_{\bar{\ell}^2(M)}.
\end{equation}
Exactly the same proof as that of lemma 2.1 in \cite{JoNIBM} gives the following formula
\begin{align}\label{Fredholmformula}
&\int_{\gamma_{\rho_1}}dz\int_{\gamma_{\rho_2}}dw\frac{F(z)G(w)}{w-z}\det\big(I-\mathcal{K}_0-(w-z)c_1^w\otimes c_2^z\big)_{\bar{\ell}^2(M)}\notag\\
&=\left(\int_{\gamma_{\rho_1}}dz\int_{\gamma_{\rho_2}}dw\frac{F(z)G(w)}{w-z}-1\right)\det(\big(I-\mathcal{K}_0\big)_{\bar{\ell}^2(M)}\notag\\
&+\det\bigg(I-\mathcal{K}_0-\big(\int_{\gamma_{\rho_2}}G(w)c_1^w\,dw\big)\otimes \big(\int_{\gamma_{\rho_1}}F(z)c_2^z\,dz\big) \bigg)_{\bar{\ell}^2(M)},
\end{align}  
if $F(z)$ and $G(w)$ are integrable functions on their respective contours.
Combining this with (\ref{KtildeFredholm}) and (\ref{Kzwformula}) we arrive at the formula
\begin{align}\label{Ktildenew}
\tilde{K}_{L,R,M}(r,u;s,v)&=\frac 1{(2\pi \mathrm{i})^2}\int_{\gamma_{\rho_1}}dz\int_{\gamma_{\rho_2}}dw\frac{z^{u-d}}{w^{v+1-d}(w-z)}
\frac{\phi_{r,R}^-(z)\phi_{L,s}^+(w)}{\phi_{L,r}^-+z)\phi_{s,R}^-(w)}\\
&+\frac{\det\big(I-\mathcal{K}_0-a_{s,v}\otimes b_{r,u}\big)_{\bar{\ell}^2(M)}}{\det(\big(I-\mathcal{K})_0\big)_{\bar{\ell}^2(M)}}-1\notag\\
&=M_{L,R}(r,u;s,v)-\sum_{k=M}^\infty((I-\mathcal{K}_0)_M^{-1}a_{s,v})(k)b_{r,u}(k).\notag
\end{align}
This proves (\ref{KLRMfinal}). The dual particle system has the correlation kernel
\begin{equation*}
K_{L,R,M}^*(r,u;s,v)=\delta_{r,s}\delta_{u,v}-K_{L,R,M}(r,u;s,v).
\end{equation*}
Combining this with the relation
\begin{equation}\label{MMstar}
M_{L,R}(r,u;s,v)=-M_{L,R}^*(r,u;s,v)+p_{r,s}(u,v)+q_{s,r}(u,v)+\delta_{r,s}\delta_{u,v},
\end{equation}
where
\begin{equation}\label{qsr}
q_{s,r}(u,v)=\frac {\mathbb{I}_{s<r}}{2\pi \mathrm{i}}\int_{\gamma_1}\frac{z^{u-v}}{\phi_{s,r}(z)}\frac{dz}z,
\end{equation}
which follows from the residue theorem, we obtain (\ref{dualKLRMfinal}).
\end{proof}

\section{Dimer models on bipartite graphs} \label{sec:3}

\subsection{Kasteleyn's method} \label{sec3.1}

Let $\mathcal{G}=(V,E)$ be a graph where $V$ is the set of vertices, or points, and $E$ the set of edges. We assume that $\mathcal{G}$ is a planar, connected, bipartite graph with no cut points. To each edge $e\in E$ we associate a {\it weight} $\nu(e)>0$. A {\it dimer configuration},
or {\it perfect matching}, $C$ on $\mathcal{G}$ is a subset of $E$ such that each vertex in the graph belongs to exactly one edge in $C$. We denote by $\mathcal{C}$ the
set of all dimer configurations on the graph.
The weight of a dimer configuration $C$ is the product of the weights of the edges in $C$,
\begin{equation*}
\nu(C)=\prod_{e\in C}\nu(e).
\end{equation*}
The {\it partition function} is defined by
\begin{equation}\label{dimerZ}
Z=\sum_{C\in\mathcal{C}}\nu(C).
\end{equation}
We can now define a probablity measure on the set of all dimer coverings of $\mathcal{G}$ by
\begin{equation}\label{dimerprob}
\mathbb{P}[C]=\frac{\nu(C)}Z,
\end{equation}
for each dimer cover $C$ of $\mathcal{G}$.

Since $\mathcal{G}$ is bipartite we have a splitting of $V$ into two types of vertices, $V=\mathbf{B}\cup \mathbf{W}$, where the vertices in $\mathbf{B}$ are called {\it black} and those in $\mathbf{W}$ 
are {\it white}.
Any edge $e\in E$ connects a black vertex $b\in \mathbf{B}$ to a white vertex $w\in \mathbf{W}$, we write $e=bw$. Let $\{b_1,\dots,b_n\}$ and $\{w_1,\dots,w_n\}$ be some enumerations of
the black and white vertices respectively. Given such an enumeration it is clear that a dimer configuration $C$ can be written
\begin{equation*}
C=\{b_iw_{\sigma(i)}\,;\,1\le i\le n\},
\end{equation*}
for some permutation $\sigma\in S_n$. We write $C=C(\sigma)$. If a permutation $\sigma\in S_n$ does not correspond to a dimer configuration we set $C(\sigma)=\emptyset$
and $\nu(b_iw_{\sigma(i)})=0$.
We see that
\begin{equation}\label{Zpermanent}
Z=\sum_{\sigma\in S_n}\prod_{i=1}^n\nu(b_iw_{\sigma(i)}),
\end{equation}
so $Z$ is given by a permanent. Permanents are hard to work with and determinants are much nicer objects. Therefore, we want to introduce some signs into the sum in
(\ref{Zpermanent}) so that we get a determinant instead. A {\it Kasteleyn sign} is a function $s:E\to\mathbb{T}$, such that for any face in $\mathcal{G}$ with edges
$e_1,\dots,e_{2k}$ in cyclic order we have the relation
\begin{equation}\label{signrelation}
\frac{s(e_1)\dots s(e_{2k-1})}{s(e_2)\dots s(e_{2k})}=(-1)^{k+1}.
\end{equation}
If $bw$ is not an edge in the graph then we put $s(bw)=1$ by convention; the exact value is unimportant.
We assume that we have such a Kasteleyn sign. Here we will not discuss whether such a function exists, see remark \ref{rem:Kasteleyn}. In concrete cases this we can just define it and check the condition
(\ref{signrelation}). The {\it Kasteleyn operator} $\mathbb{K}:\mathbf{W}\to \mathbf{B}$ is the operator with kernel
\begin{equation}\label{Kasteleynoperator}
\mathbb{K}(b,w)=s(bw)\nu(bw).
\end{equation}
Given an enumeration of the vertices we get the {\it Kasteleyn matrix},
\begin{equation}\label{Kasteleynmatrix}
\mathbb{K}=(\mathbb{K}(b_i,w_j))_{1\le i,j\le n}.
\end{equation}
The role of the Kasteleyn signs and the Kasteleyn matrix $\mathbb{K}$ is to turn (\ref{Zpermanent}) into a formula involving the determinant of the matrix $\mathbb{K}$ instead. In fact we have
the following theorem, \cite{Kas}.
\begin{thma}\label{thm:Kasteleyn}
There is a complex number $S$ with $|S|=1$, independent of the choice of the weights $\{\nu(e)\}_{e\in E}$, so that
\begin{equation}\label{Kasteleyndeterminant}
\det \mathbb{K}=SZ.
\end{equation}
\end{thma}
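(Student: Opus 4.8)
The plan is to expand both quantities in (\ref{Kasteleyndeterminant}) over the symmetric group and to show that all the nonvanishing terms share a common phase. Using the enumerations of $\mathbf{B}$ and $\mathbf{W}$, the Leibniz formula together with (\ref{Kasteleynoperator}) gives $\det\mathbb{K}=\sum_{\sigma\in S_n}\sgn(\sigma)\prod_{i=1}^n s(b_iw_{\sigma(i)})\nu(b_iw_{\sigma(i)})$, while by (\ref{Zpermanent}) the partition function $Z$ is the same sum without the signs. In both sums a term vanishes unless every $b_iw_{\sigma(i)}$ is an edge, i.e. unless $\sigma$ corresponds to a dimer configuration $C(\sigma)$, so the two sums run over exactly the same set of permutations. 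For such a $\sigma$ the determinant term equals $S(\sigma)\nu(C(\sigma))$ with $S(\sigma):=\sgn(\sigma)\prod_{i=1}^n s(b_iw_{\sigma(i)})$. Since $\sgn(\sigma)=\pm1$ and $s$ takes values in $\mathbb{T}$, we have $|S(\sigma)|=1$, and $S(\sigma)$ does not involve the weights at all. Thus it suffices to prove that $S(\sigma)$ takes one and the same value $S$ for every $\sigma$ corresponding to a dimer configuration; then $\det\mathbb{K}=S\sum_{\sigma}\nu(C(\sigma))=SZ$, which is (\ref{Kasteleyndeterminant}).

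Next I would reduce this to a statement about flipping a single alternating cycle. Given two configurations $C_1=C(\sigma_1)$ and $C_2=C(\sigma_2)$, their symmetric difference $C_1\triangle C_2$ is a disjoint union of cycles that alternate between edges of $C_1$ and edges of $C_2$; flipping these cycles one at a time produces a chain of valid configurations interpolating between $C_1$ and $C_2$, so it is enough to compare $S$ across a single flip. If $\gamma$ is such a cycle, of length $2k$, then passing from $C_1$ to $C_2$ along $\gamma$ re-matches its $k$ black vertices by a single $k$-cycle, contributing a factor $(-1)^{k-1}$ to $\sgn$, while the product of edge signs changes by the ratio of the new edges to the old edges of $\gamma$. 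Writing the edges of $\gamma$ in cyclic order as $e_1,\dots,e_{2k}$ (odd indices in $C_1$, even in $C_2$) and setting $P(\gamma)=s(e_1)s(e_3)\cdots s(e_{2k-1})/\big(s(e_2)s(e_4)\cdots s(e_{2k})\big)$, the flip multiplies $S$ by $(-1)^{k-1}/P(\gamma)$. Hence invariance of $S$ is equivalent to $P(\gamma)=(-1)^{k+1}$ for every symmetric-difference cycle $\gamma$. Here planarity is crucial: no edge can join the inside of $\gamma$ to its outside, so $C_1$ restricts to a perfect matching of the vertices strictly enclosed by $\gamma$, forcing their number $V_{\mathrm{int}}$ to be even.

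It therefore remains to establish the purely combinatorial fact that for \emph{any} cycle $\gamma$ of length $2k$ enclosing $V_{\mathrm{int}}$ vertices one has $P(\gamma)=(-1)^{k+1+V_{\mathrm{int}}}$; combined with $V_{\mathrm{int}}$ even this yields $P(\gamma)=(-1)^{k+1}$. I would prove this identity by induction on the number of faces enclosed by $\gamma$, where $2$-connectedness (no cut points) guarantees that every face is bounded by a simple cycle and that each enclosed region is a union of faces. The base case of a single face is exactly the defining relation (\ref{signrelation}) with $V_{\mathrm{int}}=0$. For the inductive step I would peel off a face $f$ meeting the boundary along a path of $\ell$ edges, obtaining a smaller region with boundary $\gamma'$; gluing $f$ back changes the enclosed vertex count by $\ell-1$ and the half-length by $k_f-\ell$, and the shared path edges cancel between $P(\gamma')$ and $P(f)$. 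The main obstacle is precisely this last bookkeeping: one must fix a consistent orientation, or alternating two-colouring, of the boundary edges so that the shared edges enter $P(\gamma')$ and $P(f)$ with opposite exponents, and then check that the accumulated parities of length and interior vertices reproduce $(-1)^{k+1+V_{\mathrm{int}}}$. Once this cycle identity is in hand, the reduction above shows $S(\sigma)\equiv S$, so $\det\mathbb{K}=SZ$ with $|S|=1$ and $S$ independent of the weights, as claimed.
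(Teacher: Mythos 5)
Your proposal is correct and follows the same strategy as the paper: expand $\det\mathbb{K}$ over $S_n$, observe that the nonvanishing terms are indexed by dimer configurations, isolate the phase $S(\sigma)=\sgn(\sigma)\prod_i s(b_iw_{\sigma(i)})$, reduce its constancy to a single alternating-cycle flip via the symmetric difference decomposition, and conclude from the sign relation for a cycle enclosing $\ell$ vertices together with the planarity argument forcing $\ell$ to be even. The one place you diverge is the proof of the auxiliary identity $P(\gamma)=(-1)^{k+1+V_{\mathrm{int}}}$ (the paper's Lemma \ref{lemma:signs}): the paper inducts on the number of enclosed vertices by using $2$-connectedness to find an interior path splitting $\gamma$ into two smaller cycles, whereas you induct on the number of enclosed faces by peeling a boundary face. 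Your version goes through: with both boundary cycles oriented consistently, a shared edge is traversed in opposite directions and hence, by bipartiteness, lands in the numerator of one $P$ and the denominator of the other, so it cancels; and the parity bookkeeping closes, since with $k'=k-\ell+k_f$ and $V_{\mathrm{int}}'=V_{\mathrm{int}}-(2k_f-\ell-1)$ one gets $k'+k_f+V_{\mathrm{int}}'\equiv k+1+V_{\mathrm{int}}\pmod 2$, which is exactly what the inductive step requires. (Your stated vertex-count change $\ell-1$ differs from $2k_f-\ell-1$ by the even number $2k_f-2\ell$, so it is harmless.) The face-peeling route is the more standard ``product over enclosed faces'' argument and is arguably easier to orient correctly; the paper's path-splitting route avoids discussing edge orientations altogether but leans more directly on the no-cut-points hypothesis to produce the splitting path.
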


Before we prove the theorem we state and prove the following lemma on the Kasteleyn signs, \cite{Ken}.

\begin{lemma}\label{lemma:signs}
Consider a planar, bipartite graph $\mathcal{G}$ with no cut points. If a cycle $e_1,\dots,e_{2k}$ of length $2k$ encloses $\ell$ points in the graph, and $s(e)$ is a 
Kasteleyn sign on $\mathcal{G}$, then
\begin{equation}\label{extendedsignrelation}
\frac{s(e_1)\dots s(e_{2k-1})}{s(e_2)\dots s(e_{2k})}=(-1)^{k+\ell+1}.
\end{equation}
\end{lemma}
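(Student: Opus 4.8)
The plan is to reduce the statement for a general cycle to the face relation \eqref{signrelation} by multiplying that relation over all faces enclosed by the cycle and controlling the bookkeeping with Euler's formula. Let $H$ be the planar graph consisting of the cycle $C=e_1,\dots,e_{2k}$ together with all vertices of $\mathcal{G}$ strictly inside $C$ and all edges of $\mathcal{G}$ lying inside $C$. First I would record that $H$ is connected: since edges cannot cross the Jordan curve traced by $C$, any path in $\mathcal{G}$ from an interior vertex to the exterior must pass through a vertex of $C$ using only edges of $H$, so a component of $H$ disjoint from $C$ would be disconnected from the rest of $\mathcal{G}$, contradicting connectedness; since $C$ is itself connected it lies in a single component, hence $H$ is connected. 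I would also note that each bounded face of $H$ is an honest face of $\mathcal{G}$, so the Kasteleyn relation \eqref{signrelation} applies to it.

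For a cycle $\gamma$ in the bipartite graph, fix once and for all the orientation of every edge from its white endpoint to its black endpoint, and write $\sigma(\gamma)$ for the alternating product in \eqref{signrelation}, read as the product of $s(e)$ over forward-traversed edges divided by the product over backward-traversed edges. Because the graph is bipartite, consecutive edges along any cycle alternate direction relative to this fixed orientation, so $\sigma(\gamma)$ coincides with the ratio in the statement up to inversion; and by \eqref{signrelation} the relevant value is $\pm 1$, hence unaffected by that ambiguity. The central step is then to multiply \eqref{signrelation} over all bounded faces $f$ of $H$, each traversed in cyclic (counterclockwise) order: every interior edge borders exactly two such faces and is traversed once in each direction, so it contributes $s(e)$ once to a numerator and once to a denominator and cancels, whereas each boundary edge occurs in exactly one face, in the same position as in $\sigma(C)$. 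Thus the product of the left-hand sides telescopes to $\sigma(C)$, giving $\sigma(C)=\prod_f(-1)^{k_f+1}$, where $2k_f$ is the length of the face $f$.

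It remains to evaluate the exponent. Summing face lengths counts each interior edge twice and each boundary edge once, so $\sum_f 2k_f = 2E_{\mathrm{int}}+2k$, whence $\sum_f(k_f+1)=E_{\mathrm{int}}+k+F_b$, where $E_{\mathrm{int}}$ is the number of interior edges and $F_b$ the number of bounded faces. Euler's formula $V-E+F=2$ applied to the connected planar graph $H$, with $V=2k+\ell$, $E=2k+E_{\mathrm{int}}$ and $F=F_b+1$, gives $F_b=1-\ell+E_{\mathrm{int}}$. Substituting, the exponent equals $2E_{\mathrm{int}}+k+1-\ell\equiv k+\ell+1\pmod 2$, which yields \eqref{extendedsignrelation}.

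The step I expect to require the most care is the telescoping of the product of face relations: one must check that the bipartite orientation convention makes the alternating products multiply so that interior edges cancel in matching numerator/denominator positions (this is exactly the fact that two faces sharing an edge induce opposite traversals of it), and that the surviving boundary edges reassemble into $\sigma(C)$ with the correct positions. The remaining points, namely the connectedness of $H$ and the Euler bookkeeping, are routine once this is set up.
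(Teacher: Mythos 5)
Your proof is correct, but it takes a genuinely different route from the paper's. The paper argues by induction on the number $\ell$ of enclosed points: using connectedness and the absence of cut points it produces a path through the interior joining two vertices of the cycle, splits the cycle into two cycles each enclosing fewer points, applies the induction hypothesis to both, and multiplies the two relations (the shared path edges cancel because they appear in opposite positions); the parity bookkeeping $\ell=\ell_1+\ell_2+b-1$ then gives \eqref{extendedsignrelation}. You instead give a global argument: multiply \eqref{signrelation} over all faces enclosed by the cycle, cancel the interior edges via the opposite-traversal observation, and evaluate the resulting exponent with Euler's formula. Both are sound. Your version avoids the induction and the case split ($a$ even versus $a$ odd) in the paper's proof, at the price of having to fix the white-to-black orientation convention carefully so that interior edges land in matching numerator/denominator slots when the face relations are multiplied — the step you rightly identify as the delicate one, and which you handle correctly (the $\pm1$ value of each face relation absorbs the starting-position ambiguity of the alternating product). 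It is also worth noting that both proofs use the no-cut-point hypothesis, but in different places: the paper needs it to find the splitting path, whereas you need it (implicitly) to ensure every face of $\mathcal{G}$ is bounded by a simple cycle, so that \eqref{signrelation} applies unambiguously and no interior edge of your subgraph $H$ is a bridge bordering the same face twice.
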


\begin{proof}
We use induction on the number of enclosed points. If $\ell=0$, then (\ref{extendedsignrelation}) is just (\ref{signrelation}). Take a cycle with $\ell>0$ internal points. Assume
that the result is true if we have a smaller number of internal points. Since the graph is connected and has no cut points, there are two points $p,q$ in the cycle, and points
$r_1,\dots, r_{b-1}$ inside, such that $pr_1\dots r_{b-1}q$ is a path in the graph. Hence, we can split the given cycle $e_1,\dots,e_{2k}$ into two cycles
$e_1\dots e_af_1\dots f_b$ and $e_{a+1}\dots e_{2k}f_b\dots f_1$, where $f_1=pr_1$, $f_2=r_1r_2$, etc., perhaps after a cyclic renumbering of the edges in the original cycle,
which does not change (\ref{extendedsignrelation}). The two new cycles have a smaller number of internal points. If $a$ is even, then $b$ must be even, and by the induction assumption
\begin{equation}\label{signrelationproof1}
\frac{s(e_1)\dots s(e_{a-1})s(f_1)\dots s(f_{b-1})}{s(e_2)\dots s(e_{a})s(f_2)\dots s(f_b)}=(-1)^{(a+b)/2+\ell_1+1},
\end{equation}
and
\begin{equation}\label{signrelationproof2}
\frac{s(e_{a+1})\dots s(e_{2k-1})s(f_b)\dots s(f_{2})}{s(e_{a+2})\dots s(e_{2k})s(f_{b-1})\dots s(f_1)}=(-1)^{(2k-a+b)/2+\ell_2+1},
\end{equation}
where $\ell_i<\ell$, $i=1,2$ are the number of internal points in the two new cycles. Note that $\ell=\ell_1+\ell_2+b-1$. Taking the product of (\ref{signrelationproof1}) and
 (\ref{signrelationproof2}) gives
 \begin{equation*}
 \frac{s(e_1)\dots s(e_{2k-1})}{s(e_2)\dots s(e_{2k})}=(-1)^{k+b+\ell_1+\ell_2+2}=(-1)^{k+\ell+1}.
 \end{equation*}
 The case when $a$ is odd is completely analogous.
\end{proof}

We are now ready for the

\begin{proof} [Proof of theorem \ref{thm:Kasteleyn}]
We have that
\begin{equation}\label{determinant expansion}
\det \mathbb{K}=\sum_{\sigma\in S_n}\sgn{\sigma}\prod_{i=1}^n K(b_i,w_{\sigma(i)})
=\sum_{\sigma\in S_n}\sgn{\sigma}\prod_{i=1}^n s(b_iw_{\sigma(i)})\prod_{i=1}^n \nu(b_iw_{\sigma(i)}).
\end{equation}
Write
 \begin{equation*}
 S(\sigma)=\sgn{\sigma}\prod_{i=1}^n s(b_iw_{\sigma(i)}).
 \end{equation*}
 We want to prove that $S(\sigma_1)=S(\sigma_2)$ for all $\sigma_1,\sigma_2$ such that $C(\sigma_1),C(\sigma_2)\neq \emptyset$, since then (\ref{determinant expansion}) gives
 $SZ$, where $S$ is the common value of all such $S(\sigma)$.
 
 Draw the two dimer coverings corresponding to $\sigma_1$ and $\sigma_2$ simultaneously on the graph. This results in a set of double edges and loops. We can change one
 dimer covering into another by moving every second dimer in a loop into the next edge in the loop. By a succession of such operations, we can change $C(\sigma_1)$ into
 $C(\sigma_2)$. Hence, it is enough to show that $S(\sigma_1)=S(\sigma_2)$ if $C(\sigma_1)$ and $C(\sigma_2)$ differ by just one loop. Assume that
$C(\sigma_1)$ and $C(\sigma_2)$ differ by just one loop of length $2k$. Then $\sigma_1=\sigma_2\tau$, where the permutation $\tau$ equals the identity except for a $k$-cycle.
Let $e_1,\dots,e_{2k}$ be the edges in the loop with $e_2,\dots, e_{2k}$ covered in $C(\sigma_1)$, and $e_1,\dots, e_{2k-1}$ covered in $C(\sigma_2)$. Then,
\begin{align*}
\frac{S(\sigma_2)}{S(\sigma_1)}&=\frac{\sgn(\sigma_2)}{\sgn(\sigma_1)}\prod_{i=1}^n\frac{s(b_iw_{\sigma_2(i)})}{s(b_iw_{\sigma_1(i)})}=\sgn(\tau)\frac{s(e_1)\dots s(e_{2k-1})}{s(e_2)\dots s(e_{2k})}\\
&=(-1)^{k+1}(-1)^{k+\ell+1}=(-1)^{\ell},
\end{align*}
where $\ell$  is the number of internal points in the loop. Since all points inside the loop are covered by dimers, $\ell$ must be even and we are done.
\end{proof}

Consider the dimer model with the probability of a dimer covering $C$ given by (\ref{dimerprob}). We want to be able to compute the probability that a given set
of dimers is covered by dimers. Let 
\begin{equation*}
 \mathbb{I}_C(e)=\begin{cases} 1 & \text{if $e\in C$}\\ 0 &\text{if $e\notin C$} \end{cases}
 \end{equation*}
for all $e\in E$ and $C\in\mathcal{C}$. Then,
\begin{equation}\label{Zformula}
Z=\sum_{C\in\mathcal{C}}\prod_{e\in E}\nu(e)^{\mathbb{I}_C(e)},
\end{equation}
which we regard as a function of all the edge weights $(\nu(e))_{e\in E}$.

Let $e_1,\dots,e_r$ be a given set of edges in $E$ and let $P(e_1,\dots,e_r)$ be the probability that they are all present in a dimer cover. Then
\begin{align}\label{probderformula}
P(e_1,\dots,e_r)&=\frac 1Z\sum_{C\in\mathcal{C}}\prod_{i=1}^r\mathbb{I}_C(e_i)\prod_{e\in C}\nu(e)
=\frac 1Z\sum_{C\in\mathcal{C}}\prod_{i=1}^r\mathbb{I}_C(e_i)\nu(e_i)\prod_{\substack{e\in C:e\neq e_i\\1\le i\le r} }\nu(e)\\
&=\frac 1Z\prod_{i=1}^r\nu(e_i)\frac{\partial^r Z}{\partial \nu(e_1)\dots\partial\nu(e_r)},\notag
\end{align}
by (\ref{Zformula}). From theorem \ref{thm:Kasteleyn} we know that $Z=S^{-1}\det \mathbb{K}$, where $S$ is independent of $\{\nu(e)\}_{e\in E}$, and thus
\begin{equation}\label{probZcomp}
P(e_1,\dots,e_r)=\prod_{i=1}^r\nu(e_i)\frac 1{\det \mathbb{K}}\frac{\partial^r }{\partial \nu(e_1)\dots\partial\nu(e_r)}\det \mathbb{K}.
\end{equation}
Let $e=b_{i_k}w_{j_k}$, $1\le k\le r$, with some enumeration of the black and white vertices, and let $I=\{i_k\}_{k=1}^r$ and $J=\{j_k\}_{k=1}^r$. For a matrix
$A=(a_{ij})_{1\le i,j\le n}$ and subsets $I,J$ of $\{1,\dots, n\}$, we write $A(I,J)=(a_{ij})_{i\in I, j\in J}$. Also, we write $I^c=\{1,\dots, n\}\setminus I$. Then
\begin{equation}\label{partialdetK}
\frac{\partial^r }{\partial \nu(e_1)\dots\partial\nu(e_r)}\det \mathbb{K}=(-1)^{\sum_{i\in I}i+\sum_{j\in J} j} \det \mathbb{K}(I^c,J^c)\prod_{i=1}^rs(e_i),
\end{equation}
by the Laplace expansion, since $\nu(b_{i_k}w_{j_k})$ occurs as $s(b_{i_k}w_{j_k})\nu(b_{i_k}w_{j_k})$ precisely in row $i_k$ and column $j_k$. Combining 
(\ref{probZcomp}) and (\ref{partialdetK}), we obtain
\begin{align*}
P(e_1,\dots,e_r)&=\prod_{k=1}^r \mathbb{K}(b_{i_k}, w_{j_k})(-1)^{\sum_{i\in I}i+\sum_{j\in J} j} \,\frac{\det \mathbb{K}(I^c,J^c)}{\det \mathbb{K}}\\
&=\prod_{k=1}^r \mathbb{K}(b_{i_k}, w_{j_k})\det\big(\mathbb{K}^{-1}(J,I)\big)
\end{align*}
by the formula for the minors of the inverse matrix.

Thus we have proved, \cite{MPS}, \cite{Ken97},
\begin{thma}\label{thm:Kenyonformula}
In the setting defined above the probability that a given set of edges $e_i=b_iw_i$, $1\le i\le r$, are included in a dimer covering is given by
\begin{equation}\label{Kenyonformula}
P(e_1,\dots,e_r)=\prod_{i=1}^r \mathbb{K}(b_i,w_i)\det\big(\mathbb{K}^{-1}(w_i,b_j)\big)_{1\le i,j\le r}.
\end{equation}
Thus, the dimers form a determinantal point process with correlation kernel
\begin{equation}\label{Kastdetkernel}
L(e_i,e_j)=\mathbb{K}(b_i,w_i)\mathbb{K}^{-1}(w_i,b_j),
\end{equation}
if $e_i=b_iw_i$, $w_i\in \mathbf{W}$, $b_i\in \mathbf{B}$.
\end{thma}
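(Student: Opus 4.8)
The plan is to express the joint edge-occupation probabilities as weighted derivatives of the partition function and then exploit the fact that, by Theorem~\ref{thm:Kasteleyn}, $Z$ equals $S^{-1}\det\mathbb{K}$ with $S$ independent of the edge weights, so that $\det\mathbb{K}$ is a polynomial that is multilinear in those weights. First I would observe that, since each indicator $\mathbb{I}_C(e)$ takes values in $\{0,1\}$, the operator $\nu(e)\,\partial_{\nu(e)}$ applied to the summand $\prod_{e}\nu(e)^{\mathbb{I}_C(e)}$ in (\ref{Zformula}) merely multiplies that summand by $\mathbb{I}_C(e)$. Applying this operator for the distinct edges $e_1,\dots,e_r$ and dividing by $Z$ therefore reproduces exactly the probability that all of $e_1,\dots,e_r$ lie in the random configuration, which is the identity (\ref{probderformula}). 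Substituting $Z=S^{-1}\det\mathbb{K}$ and cancelling the constant $S^{-1}$ between numerator and denominator gives (\ref{probZcomp}), reducing the whole computation to a single mixed derivative of $\det\mathbb{K}$.

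For the derivative itself I would use that the weight $\nu(e_k)=\nu(b_{i_k}w_{j_k})$ appears in $\mathbb{K}$ only in the single entry at row $i_k$ and column $j_k$, namely as $s(b_{i_k}w_{j_k})\nu(b_{i_k}w_{j_k})$. Because $\det\mathbb{K}$ is linear in each entry, the iterated derivative with respect to these $r$ weights is evaluated by the generalized Laplace expansion: differentiating out the rows $I=\{i_k\}$ and columns $J=\{j_k\}$ leaves the complementary minor $\det\mathbb{K}(I^c,J^c)$, multiplied by the product of Kasteleyn signs $\prod_k s(e_k)$ and by the Laplace sign $(-1)^{\sum_{i\in I}i+\sum_{j\in J}j}$. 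This is the content of (\ref{partialdetK}).

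Assembling the pieces, I would combine the elementary relation $\nu(e_k)s(e_k)=\mathbb{K}(b_{i_k},w_{j_k})$ with (\ref{probZcomp}) and (\ref{partialdetK}) to obtain the signed ratio $(-1)^{\sum_{i\in I}i+\sum_{j\in J}j}\det\mathbb{K}(I^c,J^c)/\det\mathbb{K}$ multiplied by $\prod_k\mathbb{K}(b_{i_k},w_{j_k})$. The final step is to recognize this signed ratio, via Jacobi's formula for the minors of an inverse matrix, as precisely the minor $\det\big(\mathbb{K}^{-1}(J,I)\big)$; after relabelling the vertices so that $e_i=b_iw_i$ this yields (\ref{Kenyonformula}). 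The determinantal structure then follows by absorbing each scalar $\mathbb{K}(b_i,w_i)$ into the $i$-th row of the matrix $\big(\mathbb{K}^{-1}(w_i,b_j)\big)$, which turns the product-times-determinant into $\det\big(L(e_i,e_j)\big)$ with $L$ as in (\ref{Kastdetkernel}); comparing with Definition~\ref{def:determinantal} then identifies the edge process as determinantal. I expect the only delicate point to be the sign bookkeeping: the Laplace sign produced by the derivative must match exactly the sign appearing in Jacobi's minor-of-inverse identity, so that the two cancel and a clean determinant survives.
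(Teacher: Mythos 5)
Your proposal is correct and follows essentially the same route as the paper: differentiating $Z=S^{-1}\det\mathbb{K}$ with respect to the edge weights, evaluating the mixed derivative by the Laplace expansion, and converting the signed complementary minor into a minor of $\mathbb{K}^{-1}$ via Jacobi's identity. The only detail the paper leaves implicit, as you do, is the final absorption of the prefactors $\mathbb{K}(b_i,w_i)$ into the rows of the inverse-minor determinant, which is immediate.
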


Computing $\mathbb{K}^{-1}$ for large graphs is in general a very hard problem. Typically, we are interested in computing $\mathbb{K}^{-1}$ for a growing sequence of graphs and
we want to have a formula for $\mathbb{K}^{-1}$ that is suitable for asymptotic analysis. In some special models it is possible to write a double contour integral formula for the
inverse Kasteleyn matrix. We will not give such formulas in this paper, except for the two-periodic Aztec diamond, see (\ref{KinverseTPAz}). However, we will mention a few cases where 
such formulas are available and give references.
 If we have a dimer model on a torus then we can use Fourier analysis to compute $\mathbb{K}^{-1}$, but in this case theorem \ref{thm:Kasteleyn} is no
longer valid since we assumed that $\mathcal{G}$ is a planar graph. However, the theorem can be extended to the case of the torus and more generally to
graphs on higher genus surfaces, see \cite{CiRe}, \cite{Ken}.

\begin{remark} \label{rem:Kasteleyn}
{\rm The Kasteleyn method was originally developed for non-bipartite graphs, \cite{Kas}, \cite{CiRe}. The partition function is then given by the Pfaffian of an 
appropriately defined antisymmetric matrix, the {\it Kasteleyn matrix}. The elements are again weights of edges with a choice of signs. These signs are specified by choosing
an orientation of the edges in the graph with certain properties, a {\it Kasteleyn} or {\it Pfaffian orientation}, \cite{CiRe}, \cite{FeTi}, \cite{Kas}.}
\end{remark}

\subsection{The Aztec diamond} \label{sec:Aztec}

\begin{figure}
\begin{center}
\includegraphics[height=3cm,angle=90]{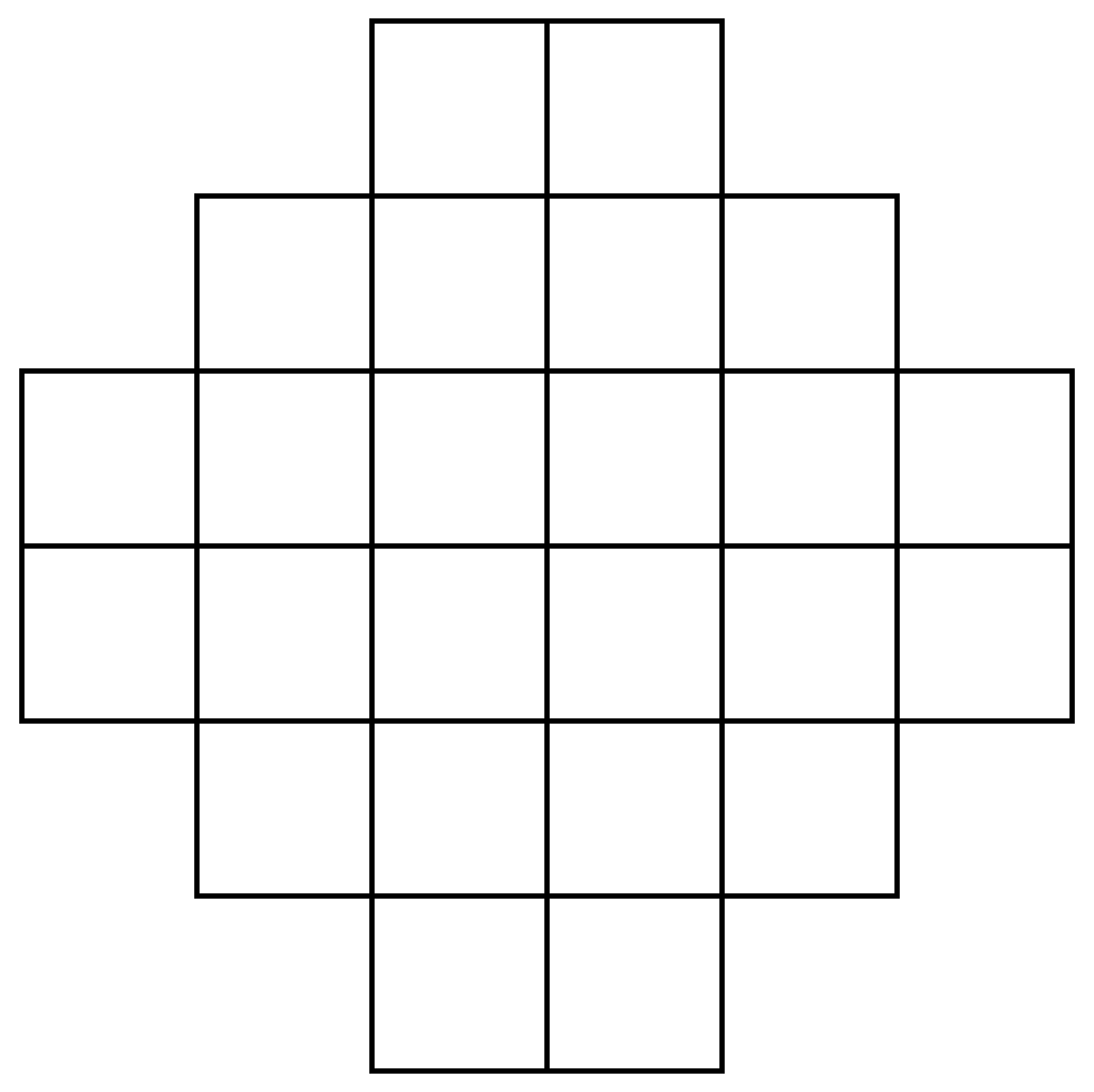} \hspace{10mm}
			\includegraphics[height=3cm,angle=90]{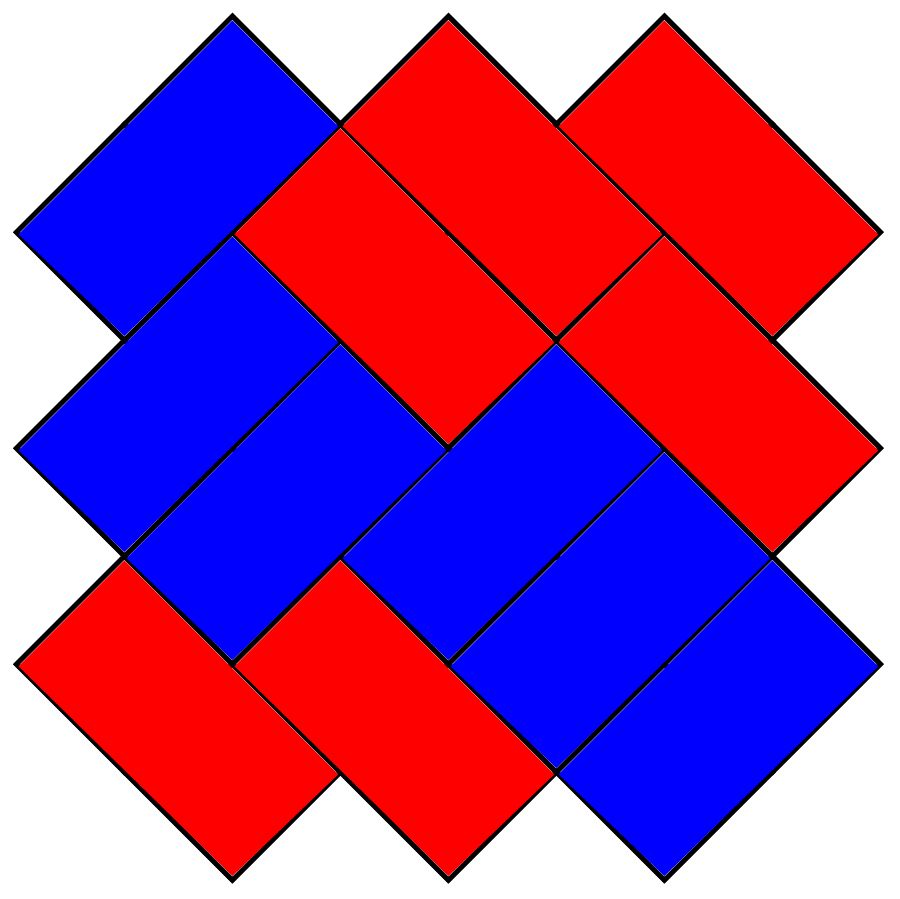} 
\caption{On the left we see a size 3 Aztec diamond shape. On the right we have a domino covering of this shape rotated by 45 degrees. This is a domino tiling of the Aztec diamond.}
\label{fig:Aztecdef}
\end{center}
\end{figure}

\begin{figure}
\begin{center}
\includegraphics[height=3cm,angle=90] {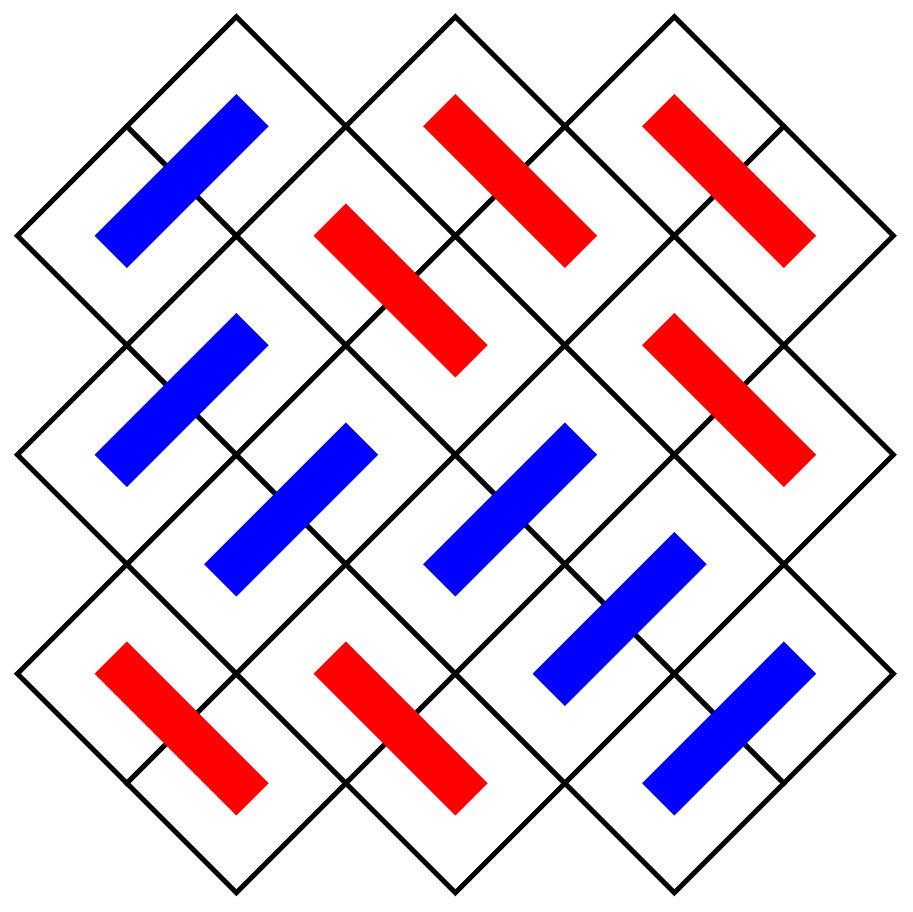}\hspace{10mm}
\includegraphics[height=3cm,angle=90]{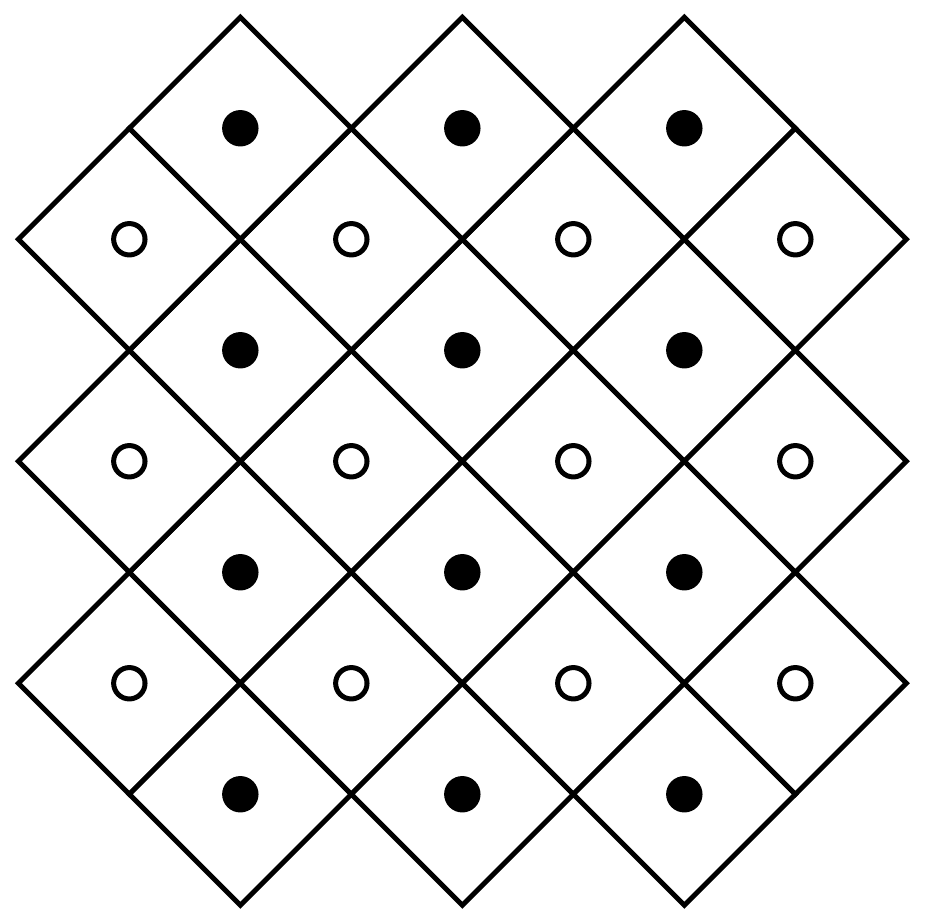}
\caption{To the left we have the dimer cover corresponding to the domino tiling in figure \ref{fig:Aztecdef}. To the right is the underlying Aztec diamond graph for the corresponding dimer model. }
\label{fig:Aztecdimer}
\end{center}
\end{figure}

The \emph{Aztec diamond shape} of size $n$ is the union of all unit lattice squares $[k,k+1]\times [\ell,\ell+1]$ inside the region $\{(x,y)\,;\,|x|+|y|\le n+1\}$. This region can be tiled by dominoes, i.e. size $1\times 2$ or $2\times 1$ rectangles, see figure \ref{fig:Aztecdef}. We are interested in random domino tilings of the Aztec diamond shape and we will refer to
this as the Aztec diamond (model). It was introduced studied systematically in \cite{EKLP1}, where it was shown that there are $2^{n(n+1)/2}$ possible domino tilings of an Aztec
diamond shape of size $n$. We can also think of this model as a dimer model on a certain graph that we call the \emph{Aztec diamond graph}, see figure \ref{fig:Aztecdimer}.
We can introduce weights by giving weight $1$ to horizontal dominoes, the red dominoes in figure \ref{fig:Aztecdef}, and weight $a$ to the vertical dominoes, the blue dominoes
in figure \ref{fig:Aztecdef}. This translates directly into a weighting of the Aztec diamond graph in the dimer model version. We can choose a Kasteleyn sign by putting the weight $1$ on horizontal edges, those in the direction southwest to northeast in figure \ref{fig:Aztecdimer}, and weight $ai$ on the vertical edges, those in the direction southeast to northwest
in figure \ref{fig:Aztecdimer}. There is a black and white checkerboard colouring of the unit squares in the Aztec diamond shape, which corresponds to the black and white colouring
of the vertices in the bipartite Aztec diamond graph, see the right picture in figure \ref{fig:Aztecdimer}. The four colours in random domino tilings that we see in figure \ref{fig:aztecsim}
correspond to the vertical and horizontal dominoes and the fact that a vertical (and horizontal) domino can cover in two different ways. A vertical domino can have black square at the top or at the bottom.

\subsection{The height function for the Aztec diamond} \label{sec:3.3}
To each tiling of the Aztec diamond we can associate a height function. The heights sit on the faces of the corresponding Aztec diamond graph. If we have a size $n$ Aztec diamond we consider the faces of the size $n+1$ Aztec diamond graph, see figure \ref{fig:Aztecheightdef}. We fix (arbitrarily) the height to be $0$ at the lower left face. The \emph{heights} are then fixed by the 
following rules for height differences between two adjacent faces. Recall that we have a black and white colouring of the vertices. The height difference is

\begin{enumerate}  
\item $+3\, (-3)$ if we cross a dimer with a white vertex to the right (left),
\item $+1\, (-1)$ if we do not cross a dimer and have a white vertex to the left (right).
\end{enumerate}

Note that the height on the outer faces is fixed, independent of the particular tiling. It is not hard to see that this definition is consistent if we go around a vertex. In fact, this definition
is a special case of a more general definition of a height function for dimer models on bipartite graphs, see \cite{FeTi}, \cite{Ken}.

In figure \ref{fig:Aztecheightsim} we see the height function of a random tiling of a uniform random tiling of the Aztec diamond. Note that the height above the frozen (solid) regions is completely flat, whereas it is rough above the liquid region. We can imagine from the picture that if we take a macroscopic limit we should have a limiting, non-random height function which is 
smooth except at the boundary between the liquid and frozen regions. This is indeed the case and can be proved in greater generality. This limiting function
is called the limit shape and is the solution of a certain variational problem, \cite{CKP}, \cite{KeOk1},\cite{OkLS}.

\begin{figure}
\begin{center}
\includegraphics[height=2.5in]{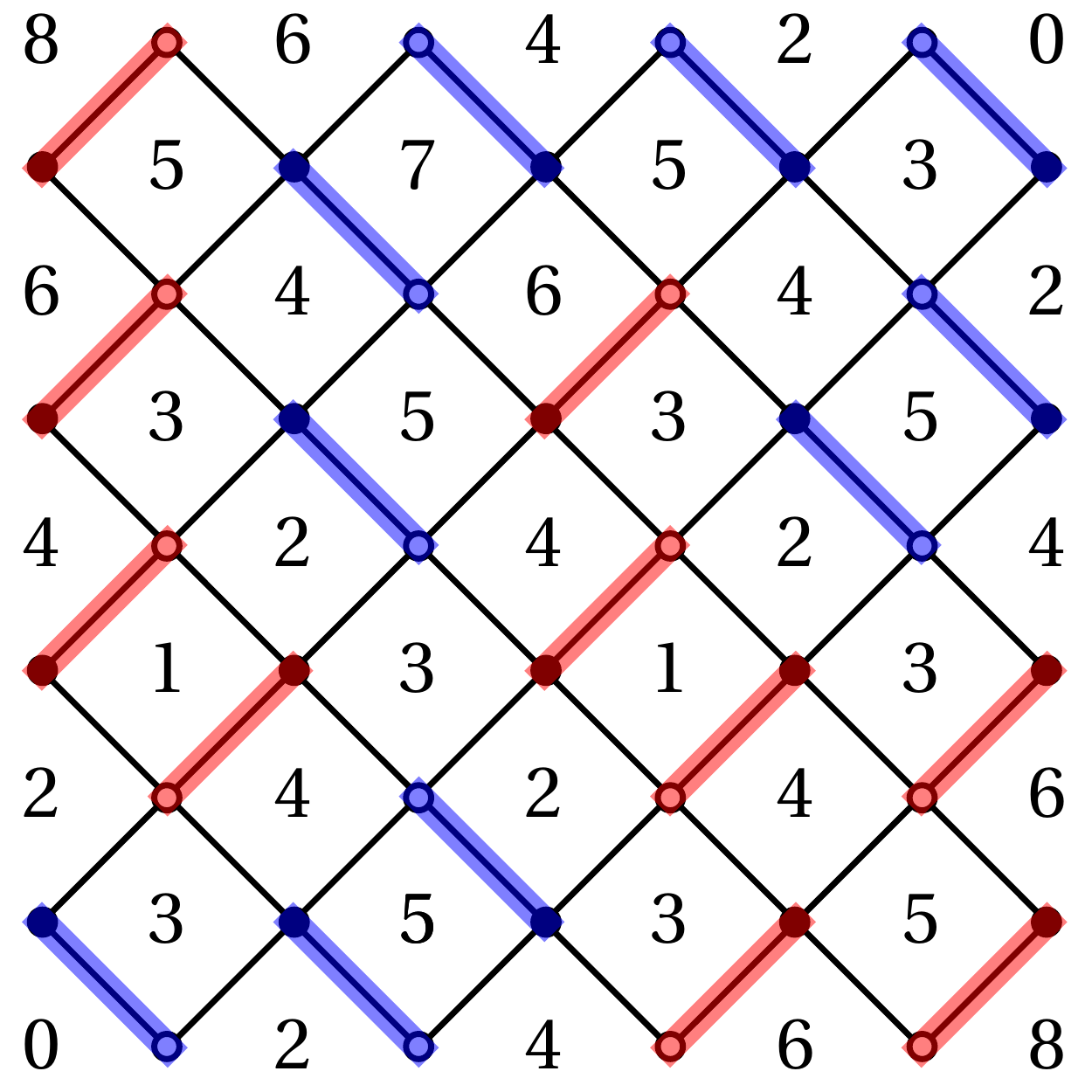}
\caption{The height function of a random tiling of an Aztec diamond.}
\label{fig:Aztecheightdef}
\end{center}
\end{figure}

\begin{figure}
\begin{center}
\includegraphics[height=3.5in]{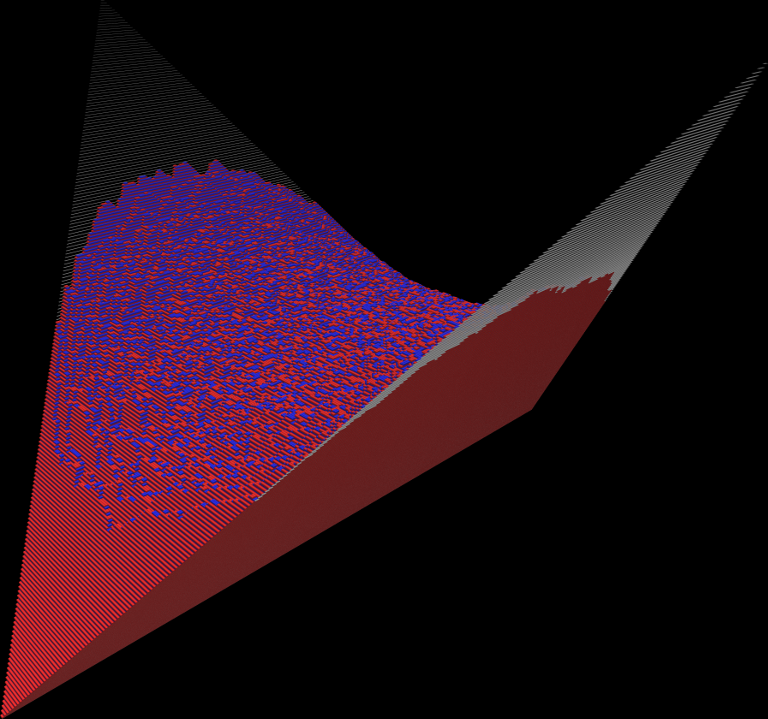}
\caption{The height function for a random tiling of a uniform Aztec diamond. (Picture by B. Young)}
\label{fig:Aztecheightsim}
\end{center}
\end{figure}

\section{Non-intersecting paths} \label{sec:4}

\subsection{The Lindstr\"om-Gessel-Viennot theorem} \label{sec:4.1}

We will consider probability measures associated with non-intersecting paths in a directed graph such as that in figure \ref{fig:Aztecdirectedgraph}, which is related to the
Aztec diamond.

Let $\mathcal{G}=(V,E)$ be a directed, acyclic graph with no multiple edges. For two vertices $u$ and $v$ the set of all directed paths $\pi$ in $\mathcal{G}$ from 
$u$ to $v$ is denoted by $\Pi(u,v)$. The set of all families of paths $(\pi_1,\dots,\pi_n)$, where $\pi_u$ goes from vertex $u_i$ to vertex $v_i$, $1\le i\le n$, is denoted by
$\Pi(\mathbf{u},\mathbf{v})$, with $\mathbf{u}=(u_1,\dots,u_n)$, $\mathbf{v}=(v_1,\dots,_n)$. We say that two directed paths {\it intersect} if they share a common vertex.
The families of paths in $\Pi(\mathbf{u},\mathbf{v})$ which do not have any intersections is denoted by $\Pi_{\text{n.i.}}(\mathbf{u},\mathbf{v})$.

A {\it weight function} $w$ is a map $w:E\to\mathbb{C}$. We extend the weight function multiplicatively to a weight function on paths by
\begin{equation*}
w(\pi)=\prod_{e\in\pi} w(e),
\end {equation*}
and to a family of paths $\pi_1,\dots,\pi_n$, by
\begin{equation*}
w(\pi_1,\dots,\pi_n)=\prod_{j=1}^n w(\pi_j).
\end {equation*}
Let $F$ be a set of families of paths. Then, the weight of $F$ is defined by
\begin{equation*}
W(F)=\sum_{(\pi_1,\dots,\pi_n)\in F}w(\pi_1,\dots,\pi_n).
\end {equation*}
For any $u,v\in V$, we set
\begin{equation}\label{niptransitionweight}
p(u,v)=W(\Pi(u,v))=\sum_{\pi\in\Pi(u,v)}w(\pi).
\end{equation}
We can think of $p(u,v)$ as the {\it transition weight} from vertex $u$ to vetex $v$.

We say that $\mathbf{u}$ and $\mathbf{v}$ are $\mathcal{G}$-{\it compatible} if whenever $\alpha<\alpha'$ and $\beta>\beta'$ every path from $u_{\alpha}$ to $v_{\beta}$ 
intersects every path from $u_{\alpha'}$ to $v_{\beta'}$. Typically the graph $\mathcal{G}$ is a plane graph and it is easy to check this condition.

\begin{thma}\label{thm:LGV} (Lindstr\"om-Gessel-Viennot). Let $\mathcal{G}$ be a directed, acyclic graph and $\mathbf{u}=(u_1,\dots,u_n)$, $\mathbf{v}=(v_1,\dots,_n)$
two $n$-tuples of verices which are $\mathcal{G}$-compatible. Then,
\begin{equation}\label{LGVdeterminant}
W(\Pi_{\text{n.i.}}(\mathbf{u},\mathbf{v}))=\det\big(p(u_i,v_j)\big)_{1\le i,j\le n}.
\end{equation}
\end{thma}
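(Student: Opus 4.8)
The plan is to prove the identity by expanding the right-hand determinant combinatorially and cancelling all intersecting contributions via a sign-reversing involution. First I would apply the Leibniz formula together with the definition (\ref{niptransitionweight}) of $p(u_i,v_j)$ as a weighted sum over paths. Writing $\mathbf{v}_\sigma=(v_{\sigma(1)},\dots,v_{\sigma(n)})$, this gives
\begin{equation*}
\det\big(p(u_i,v_j)\big)_{1\le i,j\le n}=\sum_{\sigma\in S_n}\sgn(\sigma)\prod_{i=1}^n\sum_{\pi_i\in\Pi(u_i,v_{\sigma(i)})}w(\pi_i)=\sum_{\sigma\in S_n}\sum_{(\pi_1,\dots,\pi_n)\in\Pi(\mathbf{u},\mathbf{v}_\sigma)}\sgn(\sigma)\,w(\pi_1,\dots,\pi_n),
\end{equation*}
so the determinant is a signed sum over all families of paths whose endpoints form some permutation of $\mathbf{v}$.

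Next I would split these families into non-intersecting and intersecting ones. For a non-intersecting family with permutation $\sigma$, I claim $\sigma=\mathrm{id}$. Indeed, any $\sigma\neq\mathrm{id}$ has an inversion $\alpha<\alpha'$ with $\sigma(\alpha)>\sigma(\alpha')$; setting $\beta=\sigma(\alpha)$, $\beta'=\sigma(\alpha')$, the $\mathcal{G}$-compatibility hypothesis forces $\pi_\alpha$ (from $u_\alpha$ to $v_\beta$) to intersect $\pi_{\alpha'}$ (from $u_{\alpha'}$ to $v_{\beta'}$), a contradiction. Since $\sgn(\mathrm{id})=1$, the non-intersecting families contribute exactly $W(\Pi_{\text{n.i.}}(\mathbf{u},\mathbf{v}))$, which is the desired left-hand side.

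It then remains to show the intersecting families cancel. I would define an involution on them as follows: given an intersecting family, let $i$ be the least index such that $\pi_i$ meets another path, let $x$ be the first vertex along $\pi_i$ lying on another path, and let $j$ be the least index $\neq i$ with $\pi_j\ni x$. Swap the portions of $\pi_i$ and $\pi_j$ after $x$ to produce $\pi_i',\pi_j'$. This swap preserves the total weight, since the combined multiset of edges of $\pi_i,\pi_j$ is unchanged, and it replaces $\sigma$ by $\sigma\circ(i\,j)$, reversing the sign. Hence the intersecting families pair up with opposite signs and equal weights, so their total contribution vanishes, leaving precisely $W(\Pi_{\text{n.i.}}(\mathbf{u},\mathbf{v}))$.

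The hard part will be verifying that this map is genuinely a fixed-point-free involution on the intersecting families. One must check that the canonical data $(i,x,j)$ is unchanged under the swap: since the swap alters only the tails strictly beyond $x$, the smallest offending index $i$, the first self-meeting vertex $x$ along $\pi_i$, and the partner index $j$ are all determined by the unaffected initial segments, so applying the construction twice returns the original family. Confirming this invariance carefully—and that the swapped family is still intersecting with the same marked vertex—is the only delicate point; everything else is bookkeeping.
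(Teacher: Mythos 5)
The paper does not include a proof of this theorem; it only refers the reader to \cite{JoHouch} and \cite{Ste}. Your argument is the standard Lindstr\"om--Gessel--Viennot proof given in those references --- Leibniz expansion of the determinant, identification of $\sigma=\mathrm{id}$ for non-intersecting families via $\mathcal{G}$-compatibility, and the weight-preserving, sign-reversing tail-swap involution on intersecting families --- and it is correct, including your correct identification of the invariance of the canonical triple $(i,x,j)$ under the swap as the one delicate point (note that acyclicity of $\mathcal{G}$ is what guarantees a directed path cannot revisit a vertex, which is needed to see that no vertex of $\pi_i$ before $x$ lands on the swapped tail).
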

For a proof see e.g. \cite{JoHouch} or \cite{Ste}.

\begin{remark} \label{rem:Dyson}
{\rm Instead of non-intersecting paths in a directed, acyclic graph we can consider non-colliding stochastic processes. This goes back to the work of Karlin and McGregor, \cite{KaMc}. Under suitable assumptions it is possible to prove the analogue of theorem \ref{thm:LGV} in this case and it is called the Karlin-McGregor formula. In
particular we can apply this formula to study \emph{non-colliding Brownian motions}. We start independent Brownian motions $B_j(t)$, $1\le j\le n$ at $x_1,\dots, x_n$ at time $t=0$,
and condition them never to collide for all $t>0$. This process has the same distribution as the eigenvalue process in \emph{Hermitian Dyson Brownian motion}, which is defined as follows. Let $H(t)$ be an $n\times n$ Hermitian matrix whose elements $H_{ij}(t)$, $1\le i\le j\le n$ are independent Brownian motions, real for $i=j$ and complex for $i<j$. If we take 
$H_{ij}(0)=x_j$ and project to the eigenvalue process we get a process with the same distribution as the non-colliding Brownian motions. In particular, if $x_1=\dots=x_n=0$, at each time the points are distributed as the eigenvalues of a rescaled $n\times n$ GUE matrix, see (\ref{GUE}). This indicates a strong connection between properties of non-intersecting paths and
random matrix theory. For example, the top path is analogous to the path of the largest eigenvalue.
}
\end{remark}

\subsection{Non-interesecting paths and the Aztec diamond} \label{sec4.2}
A random domino tiling of an Aztec diamond can be described in terms of non-intersecting paths and associated particles which form a determinantal point process. We describe this connection and how it leads to a formula for the correlation kernel. 

\subsubsection{Particle description of the Aztec diamond}\label{sec4.2.1}
When applying theorem \ref{thm:LGV} we often get probability measures given by products of determinants such as those discussed in section \ref{sec:2.2}. As an example,
we will consider the Aztec diamond of size $n$. The directed graph that we will use is as in figure \ref{fig:Aztecdirectedgraph}.

\begin{figure}
\begin{center}
\includegraphics[height=2.5in]{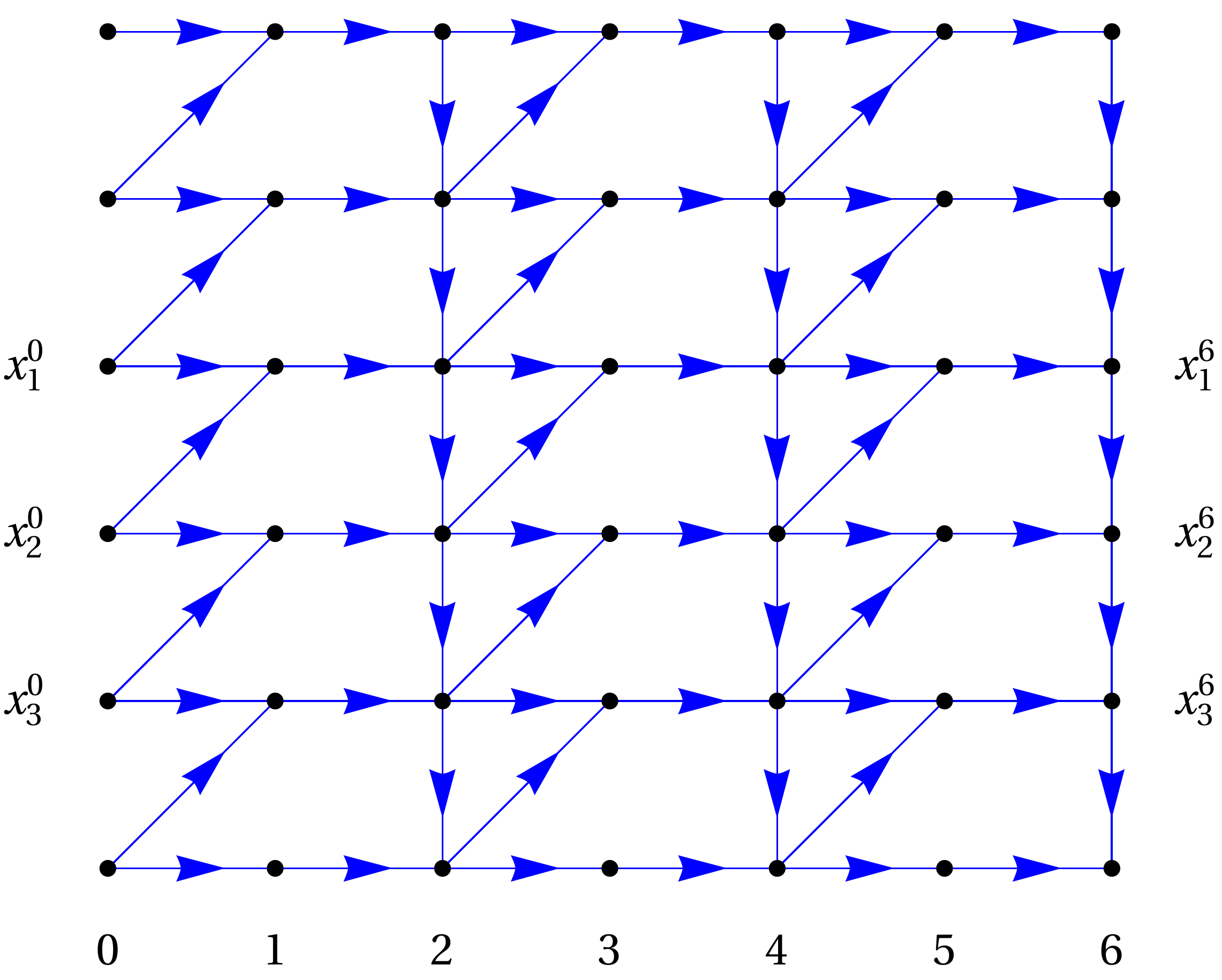}
\caption{The directed graph for the Aztec diamond}
\label{fig:Aztecdirectedgraph}
\end{center}
\end{figure}

The vertex set is $V=\{0,\dots,2n\}\times\mathbb{Z}$, and we have directed edges from $(2j, k)$ to $(2j+1,k)$, $(2j, k)$ to $(2j+1,k+1)$, $0\le j<n-1$, from 
$(2j, k)$ to $(2j,k-1)$, $0<j\le n$, and from $(2j, k)$ to $(2j-1,k)$, $1\le j \le n$. We have weight $a>0$ on edges from $(2j, k)$ to $(2j+1,k+1)$ and from
$(2j, k)$ to $(2j,k-1)$. All other edges have weight 1. Consider non-intersecting paths from $u_i=(0,x_i^0)$ to $v_i=x_i^{N}$, where $x_i^0=x_i^{N}=1-i$,
$1\le i\le M$, where $N=2n$ and $M\ge n$. A path from $u_i$ to $v_i$ intersects the line $\{r\}\times\mathbb{Z}$ at a point $(r,x_i^r)$. A family of non-intersecting paths 
$\pi_i$ from $u_i$ to $v_i$, $1\le i\le M$ has the probability
\begin{equation*}
\mathbb{P}[\Pi_{\text{n.i.}}(\mathbf{u},\mathbf{v})]=\frac 1{Z_{N,M}}w(\pi_1,\dots,\pi_M).
\end{equation*}
By theorem \ref{thm:LGV} the normalization constant is given by
\begin{equation*}
Z_{N,M}=\det\big(p(u_i,v_j)\big)_{1\le i,j\le M}.
\end{equation*}
Under this probability measure, the points $(r,x_i^r)$, $1\le r<N$, $1\le i\le M$, form a point process. We see that there is a bijective correspondence between the
non-intersecting paths and the particle configuration. The transition functions $p_{r,r+1}(x,y)$ from vertices on line $r$ to line $r+1$ are given by
\begin{align}\label{Aztectransition}
p_{2i,2i+1}(x,y)&=a\delta_{y-x,1}+\delta_{y-x,0}\\
p_{2i-1,2i}(x,y)&=a^{-(y-x)}\mathbb{I}_{y-x\le 0}.
\notag
\end{align}
It follows from theorem \ref{thm:LGV} that the probability of a particular configuration of the point process is given by
\begin{equation}\label{Aztecproductmeasure}
p_{N,M}(\mathbf{x})=\frac 1{Z_{N,M}}\prod_{r=0}^{N-1}\det\big(p_{r,r+1}(x_j^r,x_k^{r+1})\big)_{1\le j,k\le M},
\end{equation}
where $\mathbf{x}=(x^1,\dots,x^{N-1})$, $x^r=(x_1^r,\dots,x_M^r)$.

This model is in bijection with random tilings of the size $n$ Aztec diamond. We will not explain the mapping in detail but refer to figures \ref{fig:Aztecpathsparticles1} and
 \ref{fig:Aztecpathsparticles2}. See \cite{JoAz} for details.
The particles describing the non-intersecting paths can be thought of as particles sitting on the dominoes in the tiling, see the right figure in 
figure \ref{fig:Aztecpathsparticles2}.

\begin{figure}
\begin{center}
\includegraphics[height=2in]{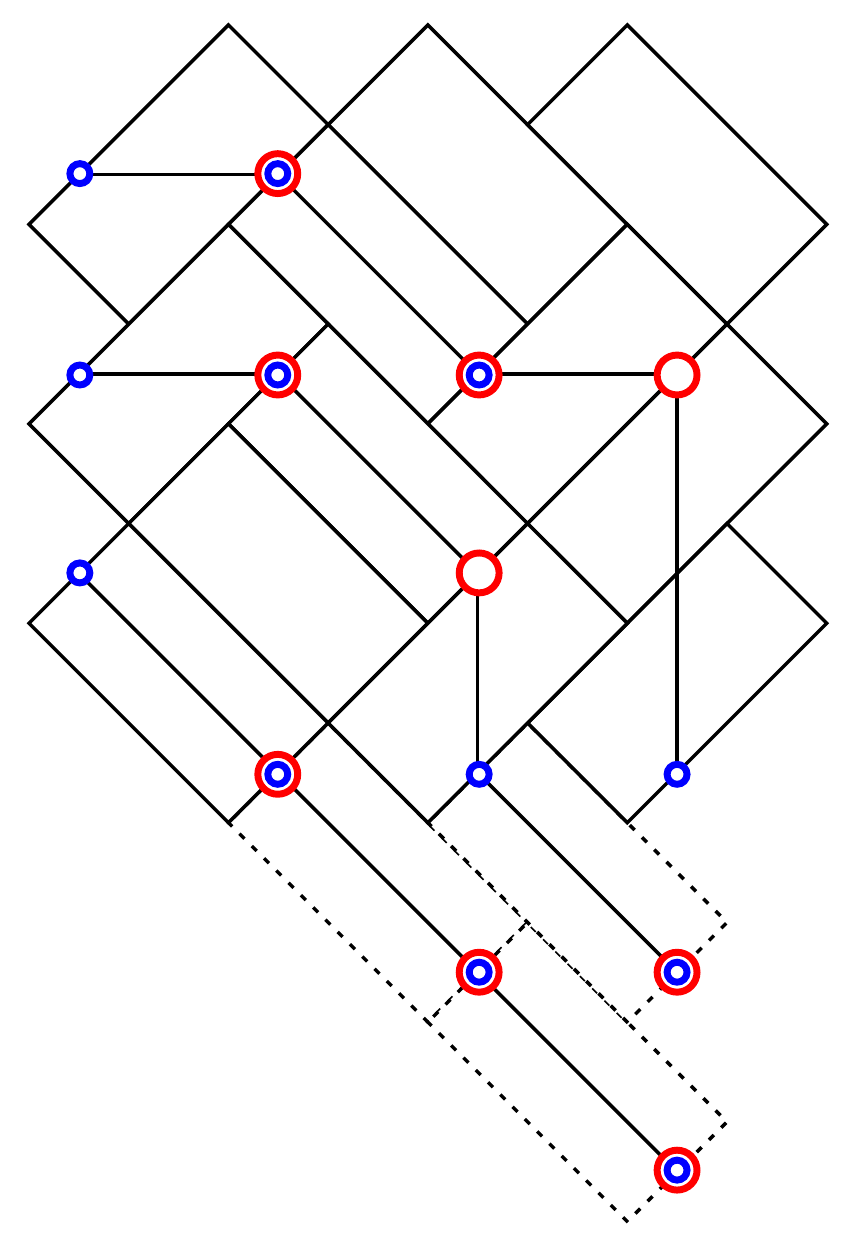}
\hspace{2mm}
\includegraphics[height=2in]{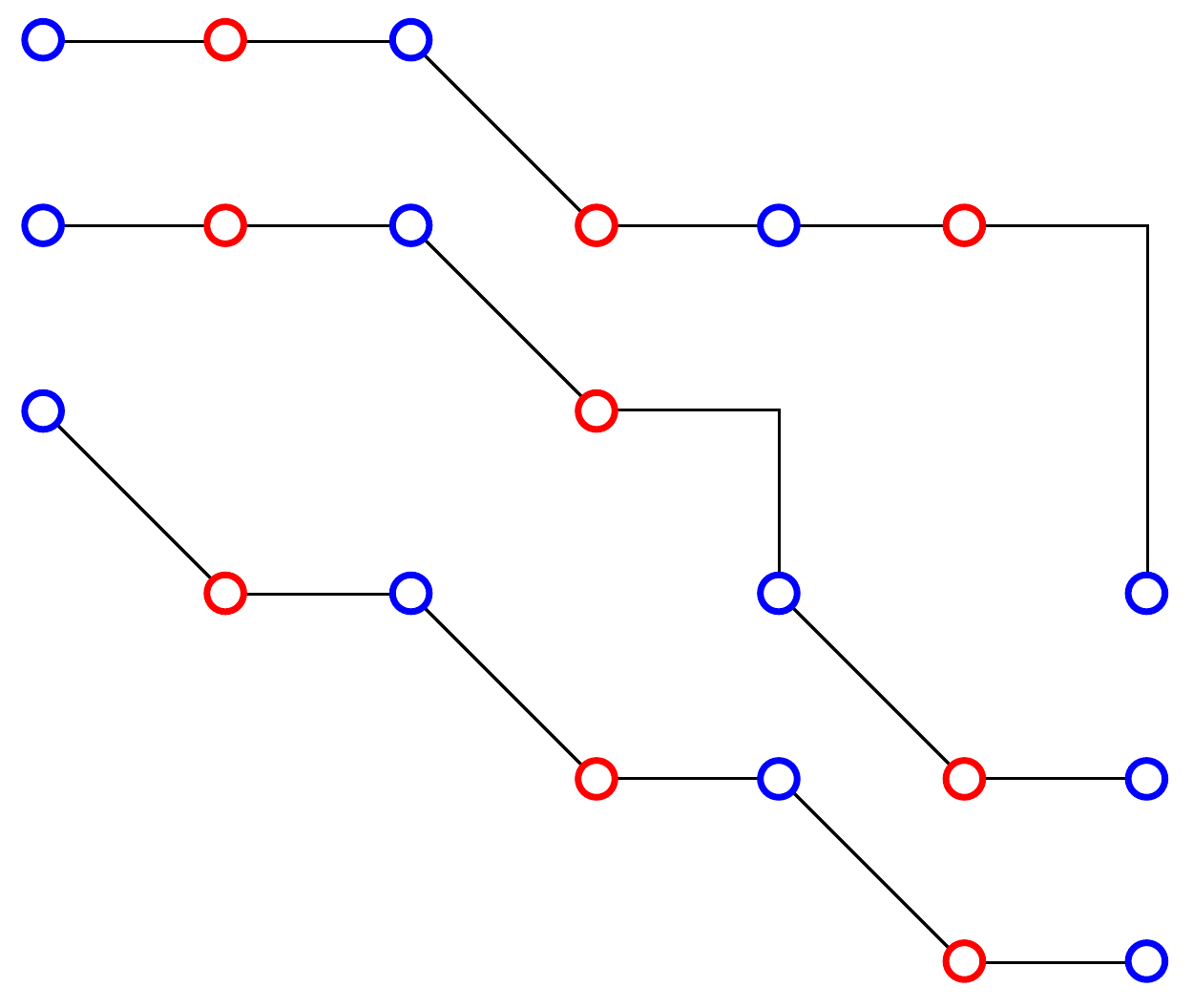}
\caption{To the left we see the non-intersecting paths describing a tiling of the Aztec diamond. We can think of the paths as going from a blue to a red point. These points can sit
on top of each other as in the picture. To the right we have rotated the picture by 45 degrees and separated the blue and red particles, which leads to a new set of non-intersecting paths. If a red particle sits on top of a blue particle the path from the red to the blue particle is a horizontal line segment.}
\label{fig:Aztecpathsparticles1}
\end{center}
\end{figure}

\begin{figure}
\begin{center}
\includegraphics[height=2in]{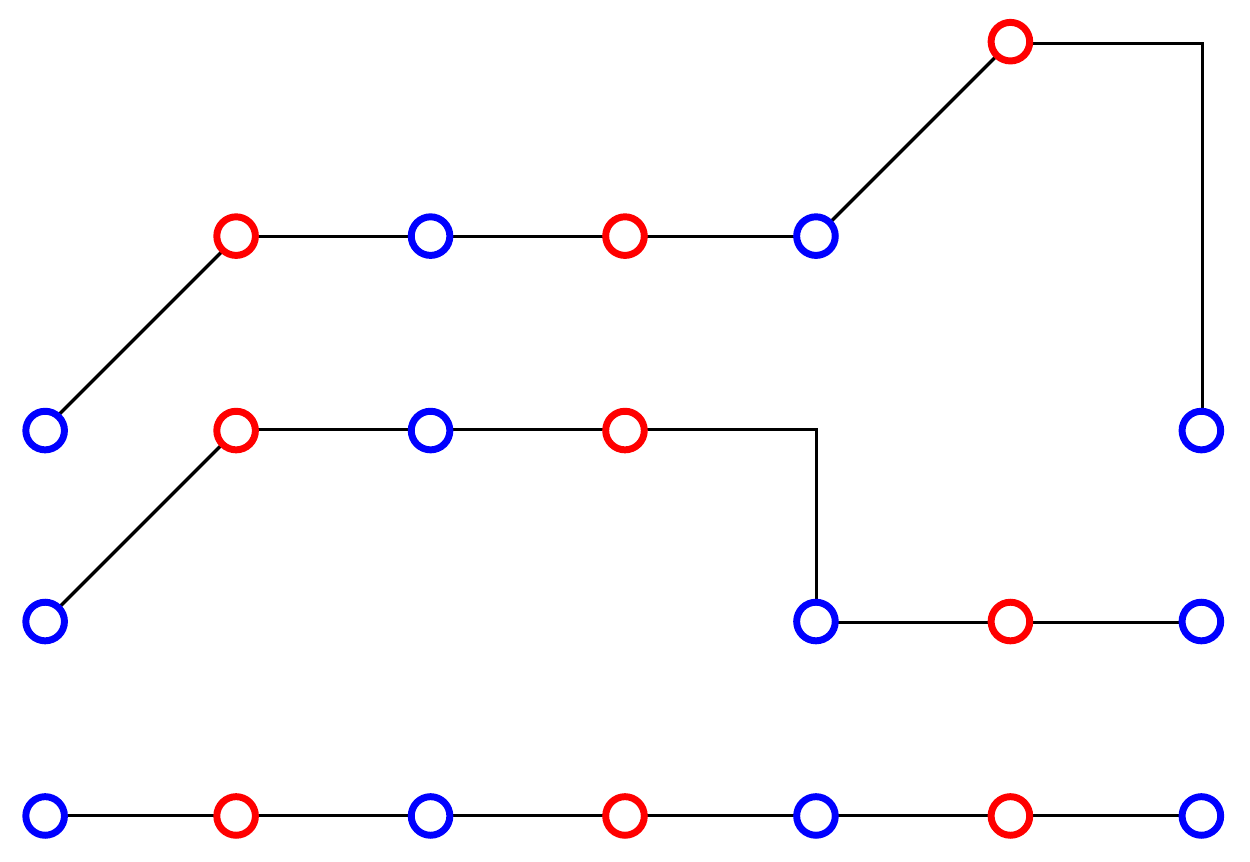}
\hspace{2mm}
\includegraphics[height=2in]{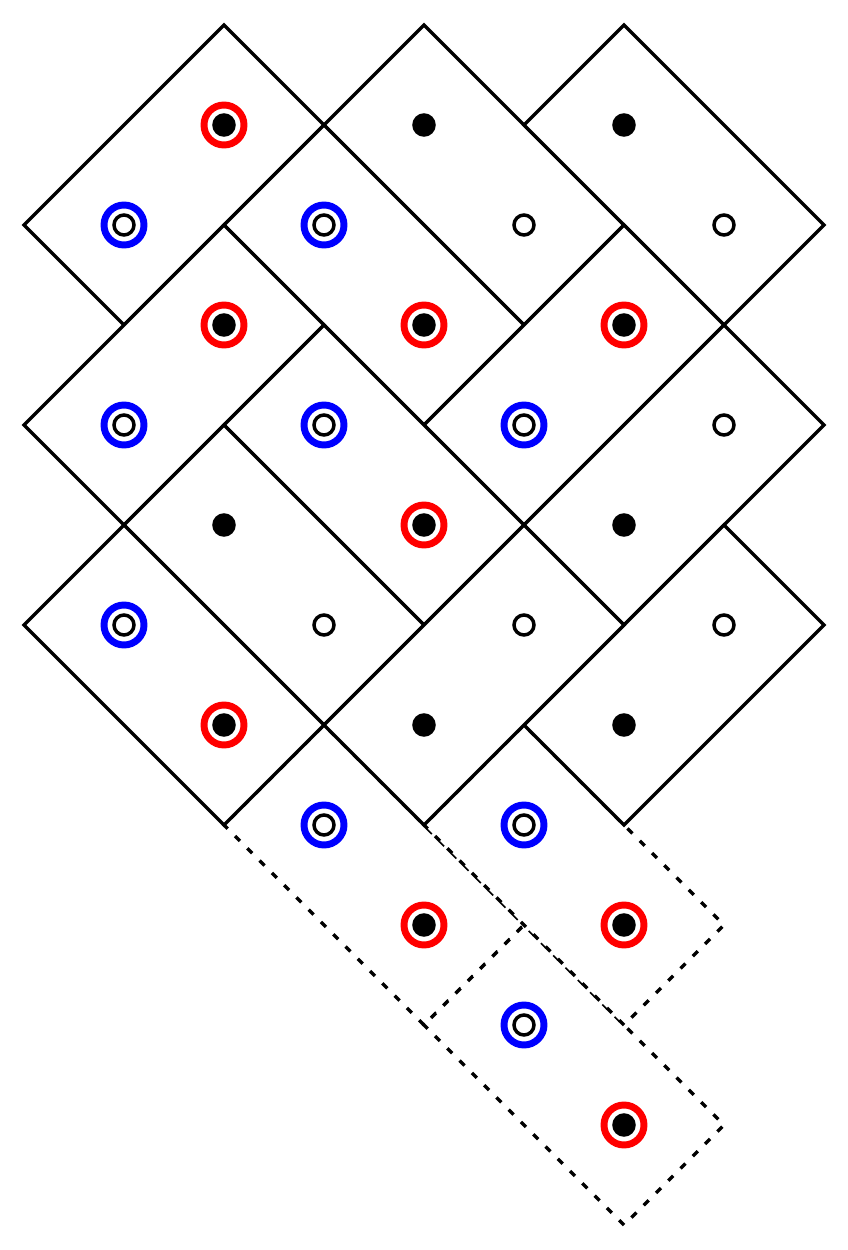}
\caption{To the left the paths to the right in figure \ref{fig:Aztecdirectedgraph} have been modified by shifting the steps between even and odd columns up by one. The first column
is column zero and the last column $2n$, where $n$ is the size of the Aztec diamond. In the right picture we have moved the red particles half a unit to the left, and the blue
particles half a unit to the right in the left figure in \ref{fig:Aztecdirectedgraph}. In this way the particles sit on the dominos, or on the vertices in the corresponding dimer model.}
\label{fig:Aztecpathsparticles2}
\end{center}
\end{figure}

This gives us a particle picture of a domino tiling of the Aztec diamond. It follows from theorem \ref{productmeasurethm} that this particle process is a determinantal point process with correlation kernel given by (\ref{generalK}). If we let
\begin{equation}\label{phiAztec}
\phi_{2i}(z)=az+1,\quad \phi_{2i-1}(z)=\frac 1{1-a/z},
\end{equation}
then $p_{r,r+1}(x,y)=\hat{\phi}_r(y-x)$. In this case, all paths from $(0,x_i^0)$ to $(N,x_i^N)$ for $i>n$ will be frozen, the same for all possible tilings, as can be seen by examining paths going from left to right in fig \ref{fig:Aztecdirectedgraph}. Thus, this example fits perfectly into the infinite Toeplitz case discussed in section \ref{sec:2.2.2}. It is straightforward to check
that
\begin{equation}\label{phiAztecWH}
\phi_{2r+\epsilon_1,2s+\epsilon_2}(z)=\phi_{2r+\epsilon_1,2s+\epsilon_2}^+(z)\phi_{2r+\epsilon_1,2s+\epsilon_2}^-(z),
\end{equation}
where
\begin{equation}\label{phiAzpm}
\phi_{2r+\epsilon_1,2s+\epsilon_2}^+(z)=(1+az)^{s-r+\epsilon_2-\epsilon_1}\,,\,\,
\phi_{2r+\epsilon_1,2s+\epsilon_2}^-(z)=\frac 1{(1-a/z)^{s-r}},
\end{equation}
for $\epsilon_1,\epsilon_2\in\{0,1\}$ and $2r+\epsilon_1<2s+\epsilon_2$. It follows from (\ref{KLRcontour}) that the particle process defined above has a correlation kernel
(note that $L=0,R=N=2n$),
\begin{align}\label{OneAzteckernel}
K_n^{\text{OneAz}}(2r+\epsilon_1,u;2s+\epsilon_2,v)&=-\frac{\mathbb{I}_{2r+\epsilon_1<2s+\epsilon_2}}{2\pi \mathrm{i}}\int_{\gamma_1}z^{u-v}
\frac{(1+az)^{s-r+\epsilon_2-\epsilon_1}}{(1-a/z)^{s-r}}\frac{dz}{z}\\
&+\frac 1{(2\pi \mathrm{i})^2}\int_{\gamma_{\rho_1}}dz\int_{\gamma_{\rho_2}}dw\frac{z^{u-1}}{w^v(w-z)}\frac{(1-a/w)^{n-s}(1+aw)^{s+\epsilon_2}}
{(1-a/z)^{n-r}(1+az)^{r+\epsilon_1}},\notag
\end{align}
where $a<\rho_1<\rho_2<1/a$. If we make the change of variables $z\to -1/z$, $w\to -1/w$ a computation gives the following alternative formula
\begin{align}\label{OneAzteckernel2}
(-1)^{u-v}K_n^{\text{OneAz}}(2r+\epsilon_1,u;2s+\epsilon_2,v)&=-\frac{\mathbb{I}_{2r+\epsilon_1<2s+\epsilon_2}}{2\pi \mathrm{i}}\int_{\gamma_1}z^{v-u}
\frac{(1+az)^{r-s}}{(1-a/z)^{r-s+\epsilon_1-\epsilon_2}}\frac{dz}{z}\\
&+\frac 1{(2\pi \mathrm{i})^2}\int_{\gamma_{\rho_1}}dz\int_{\gamma_{\rho_3}}dw\frac{w^{v-1}}{z^u(z-w)}\frac{(1+aw)^{n-s}(1-a/w)^{s+\epsilon_2}}
{(1+az)^{n-r}(1-a/z)^{r+\epsilon_1}},\notag
\end{align}
where $a<\rho_3<\rho_1<1/a$.

\begin{remark}{\rm It is possible to write a formula for the inverse Kasteleyn matrix for the Aztec diamond using $K_N^{\text{OneAz}}$. This was first done in the case $a=1$ in \cite{Helf}. The probability of seeing particles at certain position in the Aztec diamond, as in figure \ref{fig:Aztecpathsparticles2}, can be expressed both in terms of $K^{\text{OneAz}}_n$
and in terms of the inverse Kasteleyn matrix. Here one uses the fact that we have a particle at the white vertex $w$ if and only if the edge from $w$ to $w+(1,1)$ or $w+(1,-1)$ is covered by a dimer. Using this, it is possible to guess the inverse Kasteleyn matrix from $K^{\text{OneAz}}_n$, see \cite{CJY}. A more systematic approach is given in 
\cite{CY}. By associating a dimer model on so called rail-yard graphs to the Schur process, defined below, it is also possible to compute the inverse Kasteleyn matrix for the Aztec diamond, see \cite{BBCCR}.
}
\end{remark}
\begin{remark} {\rm The kernel (\ref{OneAzteckernel}) can also be expressed in terms of Krawtchouk polynomials, and a special case gives the Krawtchouk
kernel for the Krawtchouk ensemble a discrete orthogonal polynomial ensemble with binomial weight, see \cite{JoAz}.}
\end{remark}

\subsection{The Schur process} \label{sec4.3}

For basic facts about partitions and Schur polynomials see the Appendix.
It is not hard to see that the measure in (\ref{Aztecproductmeasure}) can be expressed in terms of Schur polynomials. Note that we can write the transition functions in
(\ref{Aztectransition})
\begin{equation*}
p_{2i,2i+1}(x,y)=e_{y-x}(a),\quad p_{2i-1,2i}(x,y)=h_{x-y}(a).
\end{equation*}
Thus , the measure $p_{N,M}$ in (\ref{Aztecproductmeasure}) becomes ($N=2n$)
\begin{equation}\label{AztecSchur}
p_{N,M}(\mathbf{x})=\frac 1{Z_{N,M}}\prod_{r=0}^{n-1}\det\big(e_{x_k^{2r+1}-x_j^{2r}}(a)\big)_{1\le j,k\le M}\det\big(h_{x_k^{2r+1}-x_j^{2r+2}}(a)\big)_{1\le j,k\le M},
\end{equation}
for any $M\ge n$. We can define partitions $\lambda_j^{(r)}=x_j^{2r-1}+j-1$, $\mu_j^{(r)}=x_j^{2r}+j-1$. From the nature of the paths it follows that 
$\lambda^{(r)}=(\lambda_1^{(r)},\lambda_2^{(r)},\dots )$ and $\mu{^(r)}=(\mu_1^{(r)},\mu_2^{(r)},\dots )$ are indeed partitions such that $\lambda^{(r)}_1\le n$, 
$\ell(\lambda^{(r)})\le n$, and the same for $\mu^{(r)}$. Also, the non-intersection condition gives the interlacing structure $\mu^{(r)}\prec \lambda^{(r+1)}$ and ${\mu^{(r)}}'\prec {\lambda^{(r+1)}}'$.
Thus, on the sequence of partitions satisfying
\begin{equation}\label{Aztecpartitionorder}
\emptyset=\mu^{(0)}\prec\lambda^{(1)}\succ\mu^{(1)}\prec\lambda^{(2)}\succ\dots\prec\lambda^{(n)}\succ\mu^{(n)}=\emptyset,
\end{equation}
by the Jacobi-Trudi identities (\ref{JacobiTrudi1}) and (\ref{JacobiTrudi2}), we obtain the probability measure
\begin{equation}\label{Aztecpartitionmeasure}
p(\lambda^{(1)}, \mu^{(1)},\dots,\mu^{(n-1)},\lambda^{(n)})=\frac 1{Z}\prod_{r=0}^{n-1}s_{{\lambda^{(r+1)}}'/{\mu^{(r)}}'}(a)s_{\lambda^{(r+1)}/\mu^{(r+1)}}(a).
\end{equation}
This is a special case of the Schur process that we next define more generally.

Let $\mathbf{a}^k=(a_1^k,\dots,a_{m}^k)$, $L\le k<R$, with $0<a_j^k<1$ be given together with two sequences
\begin{equation}\label{cdseq}
c_k\in\{h,e\},\quad d_k\in\{-1,1\}, \,\,L\le k<R.
\end{equation}
Introduce the notation
\begin{align}\label{Shefunctions}
S_{h,1}(\lambda,\mu;\mathbf{a})=s_{\mu/\lambda}(\mathbf{a})&,\quad S_{h,-1}(\lambda,\mu;\mathbf{a})=s_{\lambda/\mu}(\mathbf{a}),\\
S_{e,1}(\lambda,\mu;\mathbf{a})=s_{\mu'/\lambda'}(\mathbf{a})&,\quad S_{e,-1}(\lambda,\mu;\mathbf{a})=s_{\lambda'/\mu'}(\mathbf{a}).\notag
\end{align}
The {\it Schur process}, \cite{OkRe1}, \cite{BoGo}, is the probability measure on the sequence of partitions $\{\lambda^{(k)}\}_{L<k<R}$ defined by
\begin{equation}\label{Schurprocess}
p(\lambda^{(L+1)},\dots,\lambda^{(R-1)})=\frac 1Z\prod_{k=L}^{R-1}S_{c_k,d_k}(\lambda^{(k)},\lambda^{(k+1)};\mathbf{a}^k),
\end{equation}
where $\lambda^{(L)}=\lambda^{(R)}=\emptyset$.
We can think of $k$ in (\ref{Schurprocess}) as a "time parameter" and this leads to thinking of the probability measure as a process with certain transition
functions. Set $x_j^k=\lambda_j^{(k)}-j+d$, for some choice of $d\in\mathbb{Z}$, and define
\begin{equation}\label{phiSchur}
\phi_{h,\pm 1,\mathbf{a}}(z)=\prod_{i=1}^m\frac 1{1-a_iz^{\pm 1}},\quad \phi_{e,\pm 1,\mathbf{a}}(z)=\prod_{i=1}^m(1+a_iz^{\pm 1}),
\end{equation}
We see that 
\begin{equation*}
S_{c_k,d_k}(\lambda^{(k)},\lambda^{(k+1)};\mathbf{a}^k)=\det\big(\hat{\phi}_{c_k,d_k,\mathbf{a}^k}(x_h^{k+1}-x_j^k)\big)_{1\le i,j\le M},
\end{equation*}
if $M\ge \max(\ell(\lambda^{(k)}),\ell(\lambda^{(k+1)}),\lambda^{(k)}_1,\lambda^{(k+1)}_1)$ for all $k$.   Thus, the Schur process can be written
\begin{equation}\label{Schurprocessphi}
p(\lambda^{(L+1)},\dots,\lambda^{(R-1)})=\frac 1Z\prod_{k=L}^{R-1}\det\big(\hat{\phi}_{c_k,d_k,\mathbf{a}^k}(x_h^{k+1}-x_j^k)\big)_{1\le i,j\le M}
\end{equation}
as a measure on the particle configuration $x_j^k$ instead. Since we can let $M\to\infty$ we see that this fits into the framework of section \ref{sec:2.2.2} and hence
we get a determinantal point process with kernel given by (\ref{KLRcontour}) with
\begin{equation}\label{phipmSchur}
\phi_{r,s}^{+}(z)=\prod_{\{k\,;\,r\le k<s,d_k=+1\}}\phi_{c_k,+1,\mathbf{a}^k}(z),\quad
\phi_{r,s}^{-}(z)=\prod_{\{k\,;\,r\le k<s,d_k=-1\}}\phi_{c_k,-1,\mathbf{a}^k}(z).
\end{equation}
We see that the measure (\ref{AztecSchur}) that we got from the particle version of the Aztec diamond is a special case of the Schur process with
$L=0$, $R=2n$, $\mathbf{a}^k=a\in\mathbb{R}$ for all $k$, and with $c_{2r}=e$, $c_{2r-1}=h$, $d_{2r}=1$ and $d_{2r-1}=-1$.

It is an interesting and important fact for Schur processes that we can compute the partition function, or normalization constant, $Z$ in (\ref{Schurprocess}) as a function of the parameters in the model. By the arguments
in section (\ref{sec:2.2.2}) the partition function $Z$ is given by a Toeplitz determinant with symbol
\begin{equation*}
f(z)=\prod_{k=L}^{R-1}\phi_{c_k,d_k,\mathbf{a}^k}(z).
\end{equation*}
This is a Toeplitz determinant with a rational symbol and there is an exact formula for it if $M$ is large enough, or we can let $M\to\infty$ and apply the strong Szeg\H{o}
limit theorem. In symmetric function theory this relates to Cauchy type identities for Schur polynomials. Let
\begin{align*}
I_1=\{k\,;\,L\le k<R,(c_k,d_k)&=(h,1)\},\quad I_2=\{k\,;\,L\le k<R,(c_k,d_k)=(e,1)\}\\
I_3=\{k\,;\,L\le k<R,(c_k,d_k)&=(h,-1)\},\quad I_4=\{k\,;\,L\le k<R,(c_k,d_k)=(e,-1).
\end{align*}
A computation gives
\begin{equation}\label{partitionfunctionSchur}
Z=\prod_{k_1\in I_1}\prod_{k_2\in I_2}\prod_{k_3\in I_3}\prod_{k_4\in I_4}\prod_{1\le i,j\le m}
\frac{(1+a_i^{k_1}a_j^{k_4})(1+a_i^{k_2}a_j^{k_3})}{(1-a_i^{k_1}a_j^{k_3})(1-a_i^{k_2}a_j^{k_4})}.
\end{equation}

\begin{remark} {\rm The \emph{Schur measure} is a special case of the Schur process that was introduced earlier, \cite{Ok1}. It is a probability measure on partitions defined by
\begin{equation}\label{Schurmeasure}
p(\lambda)=\frac 1{Z}s_{\lambda}(a_1,\dots,a_m)s_{\lambda}(b_1,\dots,b_m),
\end{equation}
where  $a_i,b_i\in (0,1)$ are parameters.
This fits into the Schur process framework (\ref{Schurprocess}) above by taking $L=0$, $R=2$, $\lambda^{(0)}=\lambda^{(2)}=\emptyset$, $\lambda^{(1)}=\lambda$,
$c_0=c_1=h$, $d_0=1$, $d_1=-1$, $a^0=(a_1,\dots,a_m)$ and $a^1=(b_1,\dots,b_m)$. By the general formalism (\ref{KLRcontour}) becomes the correlation kernel
\begin{equation}\label{Schurmeasurekernel}
K^{\text{Schur}}(u,v)=\frac 1{(2\pi \mathrm{i})^2}\int_{\gamma_{\rho_1}}dz\int_{\gamma_{\rho_2}}dw\frac{z^{u+m-1}}{w^{v+m}(w-z)}\prod_{i=1}^m\frac{(1-a_iz)(w-b_i)}{(1-a_iw)(z-b_i)},
\end{equation}
where $\max(b_i)<\rho_1<\rho_2<\min(1/a_i)$. This is a correlation kernel
for the determinantal point process $\{x_j\}$, where $x_j=\lambda_j+1-j$, $j\ge 1$. The normalization constant $Z$ is
\begin{equation*}
Z=\prod_{1\le i,j\le m}\frac 1{1-a_ib_j},
\end{equation*}
by (\ref{partitionfunctionSchur}).}
\end{remark}

\begin{remark}\label{rem:Schur}
{\rm The Schur measure arises also in the corner growth model, last-passage percolation, and the totally asymmetric simple exclusion process (TASEP), see
\cite{JoHouch}, \cite{JoSh}, \cite{BDS}, \cite{Corw}. A special case gives the Meixner ensemble, a discrete orthogonal polynomial ensemble with the negative binomial weight.
The Airy process discussed in section \ref{subsec:Airypoint} is an important limit process in local random growth models, directed
random polymers and interacting particle systems. In this context there has been many important generalizations beyond the determinantal point process context starting with
\cite{TrWi} on the asymmetric simple exclusion process (ASEP) and \cite{Oc} on a positive temperature directed polymer. For example, the Schur process has been generalized to a much wider class of processes called Macdonald processes, \cite{BorCorw}, \cite{BoGo}. We will not attempt to summarize this here but instead refer to the review papers
\cite{BoGo}, \cite{BorPet}, \cite{Corw}, \cite{KrKr}, \cite{Quas} and references therein. See also the books \cite{BDS} and \cite{Rom} for the connection to the longest
increasing subsequence problem for random permutations.}

\end{remark}

\begin{remark} {\rm It is possible to introduce dynamics on Schur processes which leads to a picture that also includes various interacting particle systems, see \cite{BoFe}, \cite{Bor}. This
is also related to simulations of random tiling and dimer models, see e.g. \cite{EKLP2}, \cite{JPS}, \cite{Pr}, \cite{BBBCCV}.}
\end{remark}

Lozenge or rhombus tilings of certain regions, e.g. a hexagon can also be described by non-intersecting paths, see e.g. figure 1 in \cite{FFN}. These models do not seem to fit
immediately into the Schur process framework. In the next section we will consider the closely related random skew plane partitions which are given by a Schur process. In fact
this is the context in which Schur processes were first introduced, see \cite{OkRe1}. Random lozenge tilings of a hexagon can be described in terms of interlacing particle
systems which are given by a measure similar to Schur processes, see section \ref{sec:5}.

\subsection{Random skew plane partitions} \label{sec4.4}
Let $\mu\subseteq\langle a^b\rangle$ be a partition that is contained in the rectangular partition of size $a\times b$ viewed as a Young diagram. A {\it skew plane partition}
is a map
\begin{equation*}
\pi:\langle a^b\rangle\setminus\mu\ni (i,j)\mapsto\pi_{i,j}\in\{0,1,2,\dots\}
\end{equation*}
that is monotone
\begin{equation}\label{monotone}
\pi_{i,j}\ge\pi_{i+k,j+\ell}
\end{equation}
for $k,\ell\ge 0$. Placing $\pi_{i,j}$ cubes over the square $(i,j)$ in $\langle a^b\rangle\setminus\mu$ gives a 3-dimensional object, a {\it skew 3D partition} also denoted by
$\pi$. We can also think of this as a type of rhombus or lozenge tiling, see figure 1 in \cite{OkRe3}. Let
\begin{equation}\label{sizePP}
|\pi|=\sum_{(i,j)\in\langle a^b\rangle\setminus\mu}\pi_{i,j}
\end{equation}
denote the {\it volume} of the 3D partition. For $-a<k<b$ we define the partitions
\begin{equation}\label{partitionPP}
\lambda^{(k)}=(\pi_{i,k+i})_{i\ge 1},
\end{equation}
and also let $\lambda^{(-a)}=\lambda^{(b)}=\emptyset$. 

Write the partition $\mu$ as $\mu=\langle c_1^{\ell_1}c_2^{\ell_2}\dots c_m^{\ell_m}\rangle$, where $0=c_0<c_1<\dots<c_m$ and $\ell_i\ge 1$. Set
\begin{equation*}
L_r=\ell_r+\dots+\ell_m,\quad 1\le r\le m,
\end{equation*}
and $L_{k+1}=0$, so that $L_1=\ell(\mu)$. For the partition $\mu$ we have {\it inner corners} at $(L_j+1,c_{j-1}+1)$, which occur at the
{\it inner times} $t_j=c_{j-1}-L_j$, $1\le j\le m+1$. We also have {\it outer corners} at $(L_j+1,c_{j}+1)$, which occur at the
{\it outer times} $t_j=c_{j}-L_j$, $1\le j\le m$. Note that
\begin{equation}\label{timesequence}
-a=\tau_0<t_1<\tau_1<t_2<\dots<\tau_m<t_{m+1}=b.
\end{equation}
It follows from (\ref{monotone}) and (\ref{partitionPP}) that we have the interlacing structure
\begin{equation}\label{lambdaorder1}
\lambda^{(\tau_{r-1}+i)}\prec\lambda^{(\tau_{r-1}+i+1)}
\end{equation}
for $0\le i\le t_r-\tau_{r-1}-1$, and
\begin{equation}\label{lambdaorder2}
\lambda^{(t_{r}+i)}\succ\lambda^{(t_{r}+i+1)},
\end{equation}
for $0\le i\le \tau_{r}-t_r-1$, $1\le r\le m+1$. Define the two sets of increasing times $I$, and decreasing times $J$,
\begin{equation}\label{Idef}
I=\bigcup_{r=1}^{m+1}\{\tau_{r-1},\dots, t_r-1\},
\end{equation}
\begin{equation}\label{Jdef}
J=\bigcup_{r=1}^{m+1}\{t_r,\dots, \tau_r-1\}.
\end{equation}

Let $0<q_k<1$ be given parameters, and let $\lambda^{(k)}$ be the sequence (\ref{partitionPP}) coming from a skew plane partition. We can introduce a probability
measure on skew plane partitions by
\begin{equation}\label{PPmeasure}
\mathbb{P}\big[\{\lambda^{(k)}\}_{k=-a+1}^{b-1}\big]=\frac 1Z\prod_{k=-a+1}^{b-1}q_k^{|\lambda^{(k)}|}.
\end{equation}
We see from (\ref{lambdaorder1}) to (\ref{Jdef}) that we can write this probability measure as a measure on all sequences of partitions 
$\{\lambda^{(k)}\}_{k=-a+1}^{b-1}$ as
\begin{equation}\label{PPmeasure2}
\mathbb{P}\big[\{\lambda^{(k)}\}_{k=-a+1}^{b-1}\big]=\frac 1Z\left(\prod_{k=-a+1}^{b-1}q_k^{|\lambda^{(k)}|}\right)\prod_{k\in I}\mathbb{I}_{\lambda^{(k)}\prec\lambda^{(k+1)}}
\prod_{k\in J}\mathbb{I}_{\lambda^{(k)}\succ\lambda^{(k+1)}}.
\end{equation}
We want to fit this into a Schur process so that we can apply the general theory to get a determinantal point process and a formula for the correlation kernel. This leads to
the following theorem,\cite{OkRe3}.
\begin{thma}\label{thm:PP}
Consider the probability measure (\ref{PPmeasure2}) on sequences of partitions $\{\lambda^{(k)}\}_{k=-a+1}^{b-1}$ and set $x_j^k=\lambda^{(k)}_i-j+1$. If we think of 
$x_j^k$, $j\ge 1$, $-a<k<b$, as positions of particles we get a determinantal point process with correlation kernel given by
\begin{equation}\label{PPkernel}
K(r,u;s,v)=\frac 1{(2\pi \mathrm{i})^2}\int_{\gamma_{\rho_2}}dz\int_{\gamma_{\rho_2}}dw\frac{z^{u-1}}{w^v(w-z)}\frac{\prod_{k=-a}^{r-1}(1-a_k^+z)\prod_{k=s}^{b-1}(1-a_k^-/w)}
{\prod_{k=r}^{b-1}(1-a_k^-w)\prod_{k=a}^{s-1}(1-a_k^+/z)},
\end{equation}
where $q_{-a}\dots q_r<\rho_2<\rho_1<q_{-a}\dots q_{s-1}$ if $r\ge s$, and $\rho_2>q_{-a}\dots q_r$, $\rho_1<q_{-a}\dots q_{s-1}$ and $\rho_1<\rho_2$ if $r<s$.
Here
\begin{align*}
a_k^+&=(q_{-a}\dots q_k)^{-1}, \quad a_k^-=0, \,\, \text{if $k\in I$},\\
a_k^+&=0, \quad a_k^-=q_{-a}\dots q_k, \,\, \text{if $k\in J$}.
\end{align*}
\end{thma}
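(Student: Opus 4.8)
The plan is to identify the measure (\ref{PPmeasure2}) as a Schur process (\ref{Schurprocess}) in which every alphabet $\mathbf{a}^k$ is a single variable, and then to read off the kernel directly from Theorem \ref{InfiniteToeplitzcasekernel}. The only symmetric-function input I would need is that for a single variable $t$ one has $s_{\lambda/\mu}(t)=t^{|\lambda|-|\mu|}$ when $\mu\prec\lambda$ and $s_{\lambda/\mu}(t)=0$ otherwise, since a skew semistandard tableau in one letter exists exactly when $\lambda/\mu$ is a horizontal strip. In the notation (\ref{Shefunctions}) I would accordingly take $c_k=h$ for all $k$, set $d_k=+1$ on the increasing set $I$ of (\ref{Idef}) and $d_k=-1$ on the decreasing set $J$ of (\ref{Jdef}), and give the $k$-th transition the single variable $t_k$. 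Each factor then equals $t_k^{\,d_k(|\lambda^{(k+1)}|-|\lambda^{(k)}|)}$ times $\mathbb{I}_{\lambda^{(k)}\prec\lambda^{(k+1)}}$ (for $k\in I$) or $\mathbb{I}_{\lambda^{(k+1)}\prec\lambda^{(k)}}$ (for $k\in J$), so the product over $k$ reproduces exactly the interlacing indicators appearing in (\ref{PPmeasure2}).

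First I would pin down the $t_k$ by matching the volume weight $\prod_k q_k^{|\lambda^{(k)}|}$. Collecting, for each fixed time $j$, the contributions of $|\lambda^{(j)}|$ from the two neighbouring transitions, the factor of the product carrying $|\lambda^{(j)}|$ is $\big(t_{j-1}^{d_{j-1}}t_j^{-d_j}\big)^{|\lambda^{(j)}|}$, so one needs $t_{j-1}^{d_{j-1}}t_j^{-d_j}=q_j$ for $-a<j<b$. Putting $v_k=d_k\log t_k$ converts this into the telescoping relation $v_{j-1}-v_j=\log q_j$, solved by $v_k=-\sum_{i=-a}^{k}\log q_i$; unwinding gives $t_k=(q_{-a}\cdots q_k)^{-1}$ for $k\in I$ and $t_k=q_{-a}\cdots q_k$ for $k\in J$. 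Recording these as $a_k^+=t_k,\ a_k^-=0$ on $I$ and $a_k^+=0,\ a_k^-=t_k$ on $J$ is precisely the parameter list in the statement, and $0<q_k<1$ forces every specialization into $(0,1)$, so Condition \ref{WH} holds and Theorem \ref{InfiniteToeplitzcasekernel} applies.

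With the process identified I would feed it into (\ref{KLRcontour}) with $L=-a$, $R=b$ and $d=1$. The Wiener--Hopf blocks (\ref{phipmSchur}) are products of the single factors $\phi_{h,+1,a_k^+}(z)=1/(1-a_k^+z)$ for $k\in I$ and $\phi_{h,-1,a_k^-}(z)=1/(1-a_k^-/z)$ for $k\in J$ coming from (\ref{phiSchur}); these are already purely of $+$- and $-$-type, so no further factorization is needed. Substituting the resulting finite products and cancelling common factors collapses the ratio $\phi_{r,R}^-(z)\phi_{L,s}^+(w)\big/\big(\phi_{s,R}^-(w)\phi_{L,r}^+(z)\big)$ into the product of linear factors in (\ref{PPkernel}), the admissible radii being governed by the poles $z,w=1/a_k^{\pm}$, i.e. by the partial products $q_{-a}\cdots q_k$.

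The step I expect to be the real work is passing from the two-term shape of (\ref{KLRcontour}), a single integral $-\mathbb{I}_{r<s}\,p_{r,s}$ plus a double integral with $\rho_1<\rho_2$, to the single double integral of (\ref{PPkernel}). The mechanism is that deforming the $w$-contour across the $z$-contour changes the double integral by the residue at $w=z$, and a short computation shows this residue equals $\frac{1}{2\pi \mathrm{i}}\int_{\gamma_1}z^{u-v}\phi_{r,s}(z)\frac{dz}{z}$ when $r<s$ and the analogous expression involving $1/\phi_{s,r}$ from (\ref{qsr}) when $r\ge s$. Choosing the contour order $\rho_1<\rho_2$ for $r<s$ and $\rho_2<\rho_1$ for $r\ge s$, as prescribed in the statement, is exactly what absorbs the separate $\mathbb{I}_{r<s}$ term into the double integral and produces one uniform formula. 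The main obstacle is the careful bookkeeping here: tracking the signs of the residues, and checking that in each deformation only the pole at $w=z$ is crossed while the poles carried by the rational factors remain on the correct side of the contours $\gamma_{\rho_1},\gamma_{\rho_2}$.
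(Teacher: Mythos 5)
Your identification of (\ref{PPmeasure2}) as a single-variable Schur process, your telescoping determination of the specializations $t_k$, and your final contour-deformation step (absorbing the single integral $-\mathbb{I}_{r<s}p_{r,s}$ into the double integral via the residue at $w=z$) all match what the paper does. But there is a genuine gap in the middle: your claim that ``$0<q_k<1$ forces every specialization into $(0,1)$, so Condition \ref{WH} holds'' is false. For $k\in I$ the specialization is $t_k=(q_{-a}\cdots q_k)^{-1}$, and since each $q_i\in(0,1)$ the partial product lies in $(0,1)$, so $t_k>1$. The corresponding symbol $\phi_{h,+1,t_k}(z)=1/(1-t_kz)$ then has a pole at $z=1/t_k$ \emph{inside} the unit circle, so it is not analytic and non-zero in $|z|<1+\epsilon$; worse, on the unit circle its Fourier expansion is supported on negative indices, so it is not even the generating function of the transition weights $h_{y-x}(t_k)=t_k^{y-x}\mathbb{I}_{y\ge x}$ there. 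Condition \ref{WH} fails and Theorem \ref{InfiniteToeplitzcasekernel} cannot be invoked directly.

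This is precisely the point around which the paper's proof is organized: it introduces a regularizing parameter $0<\beta\le1$, replaces the increasing-step specializations by $b_k=(q_{-a}\cdots q_k)^{-1}\beta$ so that for $\beta$ small enough all $b_k<1$ and the Schur-process/infinite-Toeplitz machinery applies, derives the contour-integral kernel (\ref{PPkernelmod}) in that regime, and then uses analyticity in $\beta$ (via the explicit partition function (\ref{partitionfunctionSchur})) together with the contour conditions (\ref{radiirelation1})--(\ref{radiirelation2}) to pass to $\beta=1$. The somewhat elaborate radius conditions in the statement of the theorem, expressed through the partial products $q_{-a}\cdots q_r$ and $q_{-a}\cdots q_{s-1}$, are exactly the record of where the poles sit after this continuation. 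Without some version of this regularization-and-continuation argument (or an independent justification of the kernel formula for specializations exceeding $1$), your proof does not go through.
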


\begin{proof}
Let $0<\beta\le 1$ and consider the modified measure
\begin{equation}\label{PPmeasuremodified}
\frac 1{Z(\beta)}\left(\prod_{k=-a+1}^{b-1}q_k^{|\lambda^{(k)}|}\right)\prod_{k\in I}\beta^{|\lambda^{(k+1)}|-|\lambda^{(k)}|}
\prod_{k\in I}\mathbb{I}_{\lambda^{(k)}\prec\lambda^{(k+1)}}
\prod_{k\in J}\mathbb{I}_{\lambda^{(k)}\succ\lambda^{(k+1)}}.
\end{equation}
Let
\begin{align*}
b_k&=(q_{-a}\dots q_k)^{-1}\beta\quad\text{if $k\in I$},\\
b_k&=q_{-a}\dots q_k\,\,\, \quad\quad\quad\text{if $k\in J$}.
\end{align*}
Consider the Schur process
\begin{align}\label{PPSchurprocessmodified}
&\frac 1{Z(\beta)}\prod_{k\in I}s_{\lambda^{(k+1)}/\lambda^{(k)}}(b_k)\prod_{k\in J}s_{\lambda^{(k)}/\lambda^{(k+1)}}(b_k)\\
&=\frac 1{Z(\beta)}\prod_{k\in I}b_k^{|\lambda^{(k+1)}|-|\lambda^{(k)}|}\prod_{k\in J}b_k^{|\lambda^{(k)}|-|\lambda^{(k+1)}|}
\prod_{k\in I}\mathbb{I}_{\lambda^{(k)}\prec\lambda^{(k+1)}}
\prod_{k\in J}\mathbb{I}_{\lambda^{(k)}\succ\lambda^{(k+1)}},\notag
\end{align}
where we used (\ref{Schurinterlacing}).
Now, as is straightforward to check,
\begin{equation*}
\prod_{k\in I}b_k^{|\lambda^{(k+1)}|-|\lambda^{(k)}|}\prod_{k\in J}b_k^{|\lambda^{(k)}|-|\lambda^{(k+1)}|}
=\prod_{k=-a+1}^{b-1}q_k^{|\lambda^{(k)}|}\prod_{k\in I}\beta^{|\lambda^{(k+1)}|-|\lambda^{(k)}|},
\end{equation*}
(recall that $|\lambda^{(-a)}|=|\lambda^{(b)}|=0$). Thus, (\ref{PPmeasuremodified}) and (\ref{PPSchurprocessmodified}) are identical, and it follows that we get a
determinantal point process provided $\beta$ is so small that all $b_k$ are $<1$. This is the reason that we cannot take $\beta=1$ immediately. Note, however, that
the measure is an analytical function of $\beta$ for $0<\beta<1+\epsilon$ with $\epsilon$ small enough. Here we use the explicit form of $Z(\beta)$ in (\ref{partitionfunctionSchur}).
Hence, if we can write the formulas approproately, we can use analyticity to take the limit $\beta\to 1$.
Using the notation (\ref{phiSchur}), we see that (\ref{PPSchurprocessmodified}) can be written
\begin{equation*}
\frac 1{Z(\beta)}\prod_{k\in I}\det\big(\hat{\phi}_{h,+1,b_k}(x_i^{k+1}-x_j^k)\big)\prod_{k\in J}\det\big(\hat{\phi}_{h,-1,b_k}(x_i^{k+1}-x_j^k)\big),
\end{equation*}
where $\phi_{h,\pm 1,b_k}(z)=1/(1-b_k z^{\pm 1})$, so that
\begin{align*}
\phi^-_{h,1,b_k}(z)&=1,\quad \phi^+_{h,1,b_k}(z)=\frac 1{1-b_kz},\\
\phi^-_{h,-1,b_k}(z)&=\frac 1{1-b_k/z},\quad \phi^+_{h,-1,b_k}(z)=1.
\end{align*}
The correlation kernel is given by (\ref{KLRcontour}) and we see that 
\begin{align*}
\phi_{r,s}^+(z)&=\prod_{k\in I\cap [r,s-1]}\frac 1{1-b_kz}=\prod_{k=r}^{s-1}\frac 1{1-b_k^+z},\\
\phi_{r,s}^-(z)&=\prod_{k\in J\cap [r,s-1]}\frac 1{1-b_k/z}=\prod_{k=r}^{s-1}\frac 1{1-b_k^-/z},
\end{align*}
where $b_k^+=b_k$, $k\in I$, $b_k^+=0$, $k\in J$, and $b_k^-=b_k$, $k\in J$, $b_k^-=0$, $k\in I$. Thus, by (\ref{KLRcontour}), the correlation kernel is given by
\begin{align}\label{PPkernelmod}
K(r,u;s,v)&=-\frac{\mathbb{I}_{r<s}}{2\pi \mathrm{i}}\int_{\gamma_1}\frac{z^{u-v}}{\prod_{k=r}^{s-1}(1-b_k^+z)\prod_{k=r}^{s-1}(1-b_k^-/z)}\frac {dz}{z}\\
&+\frac 1{(2\pi \mathrm{i})^2}\int_{\gamma_{\rho_2}}\int_{\gamma_{\rho_3}}\frac{z^{u-1}}{w^v(w-v)}
\frac{\prod_{k=a}^{r-1}(1-b_k^+z)\prod_{k=s}^{b-1}(1-b_k^-/w)}{\prod_{k=a}^{s-1}(1-b_k^+w)\prod_{k=r}^{b-1}(1-b_k^-/z)},
\notag
\end{align}
where
\begin{equation*}
\max_{r\le k<b}(b_k^-)<\rho_2<\rho_3<\min_{a\le k<s}(1/b_k^+),
\end{equation*}
i.e.
\begin{equation}\label{radiirelation1}
q_{-a}\dots q_r<\rho_2<\rho_3<\frac 1{\beta}q_{-a}\dots q_{s-1}.
\end{equation}
For the first integral, we also require that
\begin{equation}\label{radiirelation2}
q_{-a}\dots q_r<1<\frac 1{\beta}q_{-a}\dots q_{s-1}
\end{equation}
for $r<s$.
Assume $r\le s$. Then $q_{-a}\dots q_r<q_{-a}\dots q_{s-1}$, and we can take the limit $\beta\to 1$in (\ref{PPkernelmod}) and obtain the result of the theorem. If $r<s$, then we
take $q_{-a}\dots q_r<\rho_1<\rho_2<\frac 1{\beta}q_{-a}\dots q_{s-1}$ and we move the $w$-contour $\gamma_{\rho_3}$ in (\ref{PPkernelmod}) to
$\gamma_{\rho_1}$, which gives
\begin{equation*}
K(r,u;s,v)=\frac 1{(2\pi \mathrm{i})^2}\int_{\gamma_{\rho_2}}dz\int_{\gamma_{\rho_2}}dw\frac{z^{u-1}}{w^v(w-z)}\frac{\prod_{k=-a}^{r-1}(1-b_k^+z)\prod_{k=s}^{b-1}(1-b_k^-/w)}
{\prod_{k=r}^{b-1}(1-b_k^-w)\prod_{k=a}^{s-1}(1-b_k^+/z)}.
\end{equation*}
The contribution from the pole at $w=z$ exactly cancels the single integral in the right side of (\ref{PPkernelmod}).
This holds provided $\rho_2>q_{-a}\dots q_r$, $\rho_1<q_{-a}\dots q_{s-1}$ and $\rho_1<\rho_2$. We can now let $\beta\to 1$ and we have proved the theorem.
\end{proof}

It follows from (\ref{sizePP}) and (\ref{partitionPP}) that if we take $q_k=q\in(0,1)$ for all $k$, then the probability measure (\ref{PPmeasure}) on partitions satisfying the ordering condition becomes
\begin{equation*}
\mathbb{P}\big[\{\lambda^{(k)}\}_{k=-a+1}^{b-1}\big]=\frac 1Z q^{|\pi|},
\end{equation*}
which is called the $q^{\text{vol}}-measure$ on skew 3D partitions.

\subsection{The Double Aztec diamond} \label{sec4.5}
Consider a random tiling of the double Aztec diamond as shown in figure \ref{fig:tpn200}. The Double Aztec diamond shape is shown in figure \ref{fig:DoubleAztecShape}.
A tiling of the Double Aztec diamond can be described by the same type of non-intersecting paths, here called the {\it outlier paths},
as in the ordinary Aztec diamond, see figure \ref{fig:Outlierpaths}. We call the associated particles the {\it outlier particles} and they are indicated in the figure. After the same mapping as for the standard Aztec diamond we obtain the paths in figure \ref{fig:InlierOutlierpaths}.

\begin{figure}
\begin{center}
\includegraphics[height=2.5in]{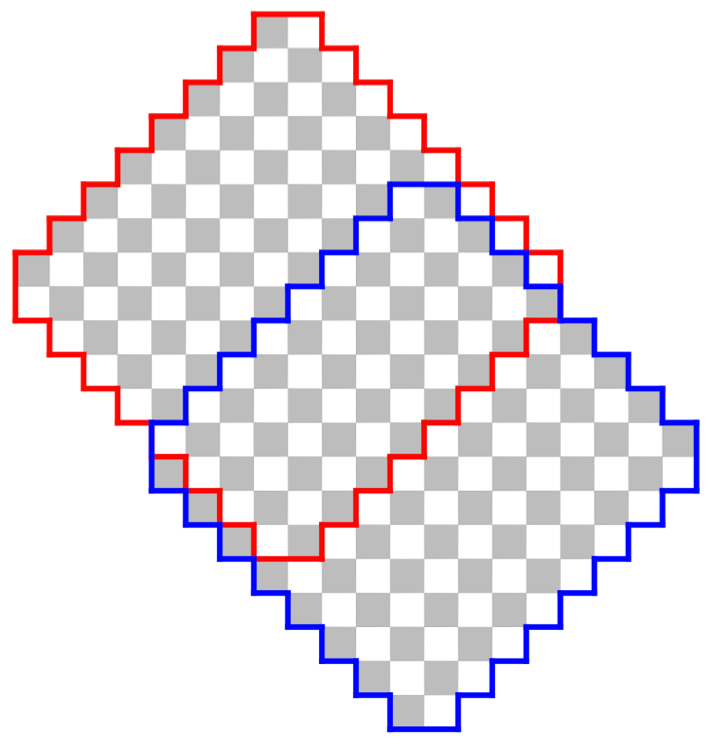}
\caption{The double Aztec shape.}
\label{fig:DoubleAztecShape}
\end{center}
\end{figure}

\begin{figure}
\begin{center}
\includegraphics[height=3.5in]{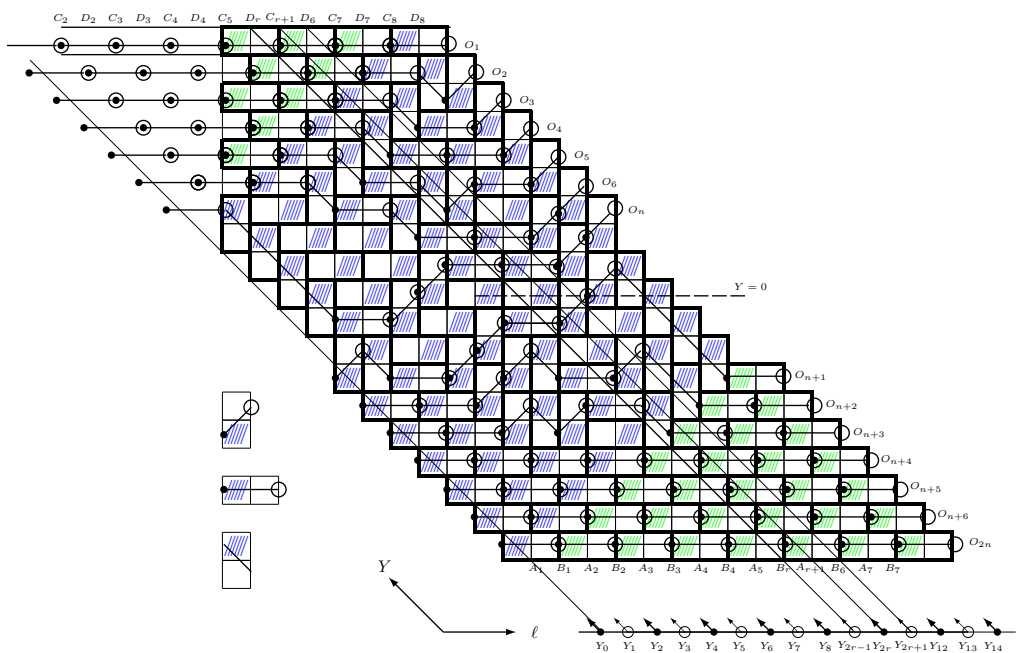}
\caption{Paths describing a random tiling of the double Aztec diamond together with the outlier particles.}
\label{fig:Outlierpaths}
\end{center}
\end{figure}

\begin{figure}
\begin{center}
\includegraphics[height=3in]{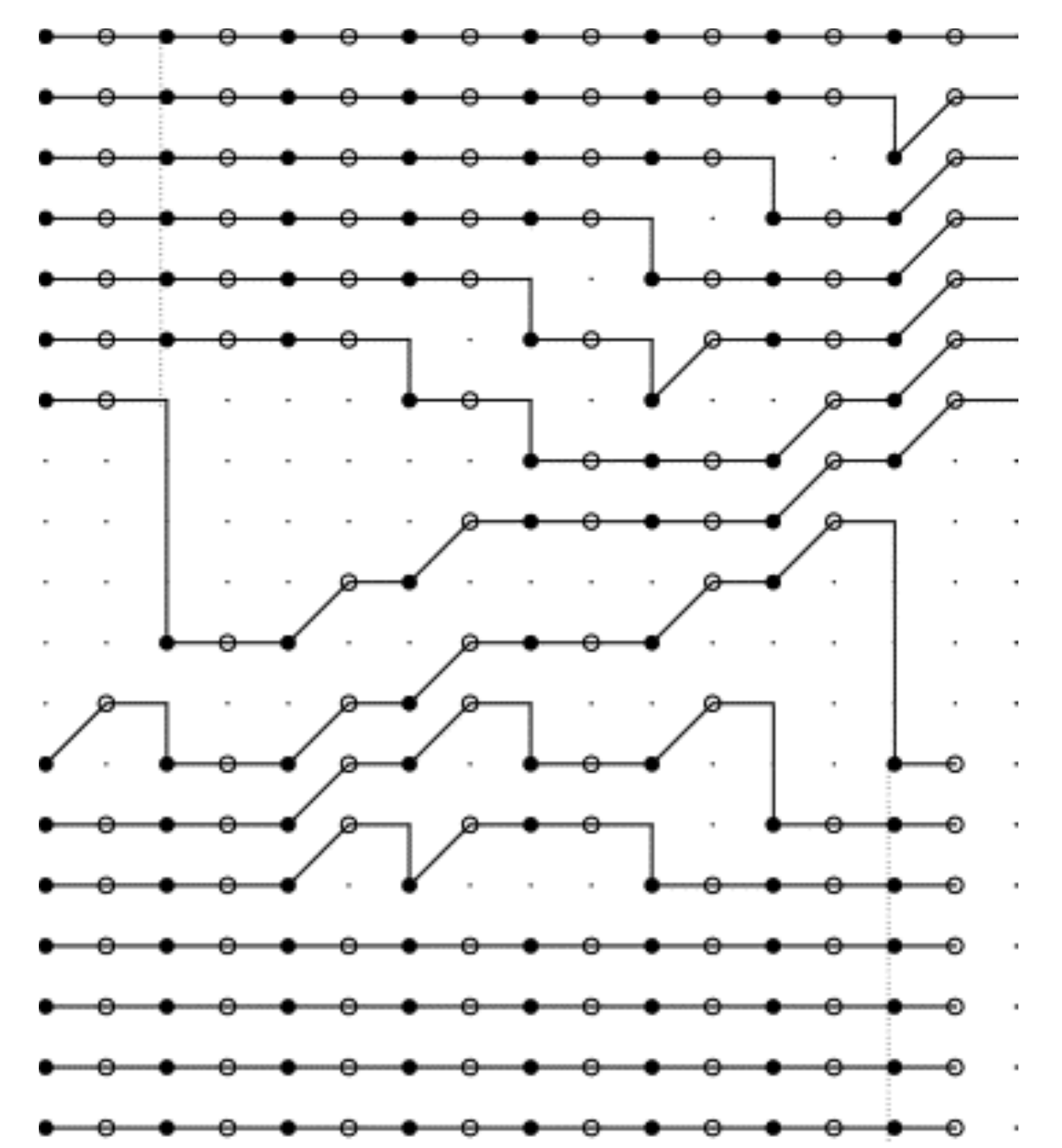}
\hspace{2mm}
\includegraphics[height=3in]{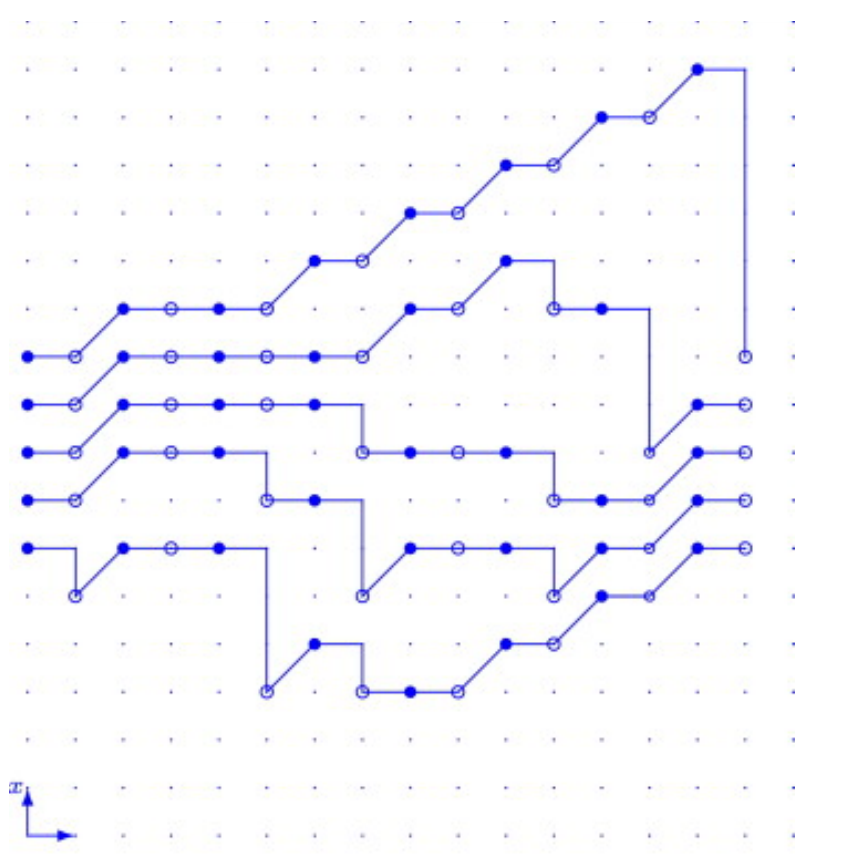}
\caption{To the left the modified outlier paths and particles in the double Aztec diamond, and to the right the inlier paths and particles. If we superimpose the two pictures we
see that the outlier particles are dual to the inlier particles.}
\label{fig:InlierOutlierpaths}
\end{center}
\end{figure}

This system of non-intersecting paths does not fit into the Schur process framework. However, by considering the corresponding dual particle system, called the {\it inlier particles} and
the corresponding non-intersecting paths, the {\it inlier paths}, see figure \ref{fig:InlierOutlierpaths}, we get a model that we can analyze using the results from section \ref{sec:2.2.3}.

Consider again the transition functions (\ref{Aztectransition}) and the measure (\ref{Aztecproductmeasure}) with $N=2n+1$, and $M=2m+1$ inlier particles on each vertical line.
Then, with $\epsilon_1,\epsilon_2\in\{0,1\}$,
\begin{equation}\label{DAtransition}
p_{2r+\epsilon_1,2s+\epsilon_2}(x,y)=\frac {\mathbb{I}_{2r+\epsilon_1<2s+\epsilon_2}}{2\pi \mathrm{i}}\int_{\gamma_1}\phi_{2r+\epsilon_1,2s+\epsilon_2}(z)\frac{dz}{z},
\end{equation}
where $\phi_{2r+\epsilon_1,2s+\epsilon_2}$ is given by (\ref{phiAztecWH}) and (\ref{phiAzpm}). We have initial and final points $x_j^0=x_j^N=m+1-j$, $1\le j\le 2m+1=M$. Thus,
we get a kernel of the form (\ref{finiteMkernel}) with $L=0, R=N=2n+1$. The correlation kernel for the outlier kernel is then given by formula (\ref{dualKLRMfinal}) in
theorem \ref{thm:KMfinite}. Note that the inlier particles are the particles in this theorem whereas the outlier particles are the dual particles. From (\ref{Kzero}), (\ref{abformulas})
and (\ref{phiAzpm}) we see that
\begin{equation}\label{DAKzero}
\mathcal{K}_0(j,k)=\frac 1{(2\pi \mathrm{i})^2}\int_{\gamma_{\sigma_1}}d\omega\int_{\gamma_{\sigma_2}}d\zeta \frac{\omega^k}{\zeta^{j+1}(\zeta-\omega)}
\frac{(1+a\omega)^n(1-a/\omega)^{n+1}}{(1+a\zeta)^n(1-a/\zeta)^{n+1}},
\end{equation}
and
\begin{align}\label{DAabformulas}
a_{2s+\epsilon_2,v}(j)&=\frac 1{(2\pi \mathrm{i})^2}\int_{\gamma_{\rho_2}}dw\int_{\gamma_{\sigma_2}}d\zeta \frac{w^{m-v}}{\zeta^{j+1}(\zeta-w)}
\frac{(1+aw)^s(1-a/w)^{n-s+1-\epsilon_2}}{(1+a\zeta)^n(1-a/\zeta)^{n+1}}\\
b_{r,u}(k)&=\frac 1{(2\pi \mathrm{i})^2}\int_{\gamma_{\rho_1}}dz\int_{\gamma_{\sigma_1}}d\omega \frac{\omega^k}{z^{m+1-u}(\omega-z)}
\frac{(1+a\omega)^n(1-a/\omega)^{n+1}}{(1+az)^r(1-a/z)^{n-r+1-\epsilon_1}},\notag
\end{align}
where
\begin{equation}\label{radii}
a<\rho_3<\rho_1<\sigma_1<\sigma_2<\rho_2<1/a.
\end{equation}
Furthermore from (\ref{Mstar}) we obtain
\begin{equation}\label{DAMstar}
M_{0,2n+1}^*(2r+\epsilon_1,u;2s+\epsilon_1,v)=\frac 1{(2\pi \mathrm{i})^2}\int_{\gamma_{\rho_1}}dz\int_{\gamma_{\rho_3}}dw \frac{z^{u-m-1}}{w^{v-m}(z-w)}
\frac{(1+aw)^s(1-a/w)^{n-s+1-\epsilon_2}}{(1+az)^r(1-a/z)^{n-r+1-\epsilon_1}},
\end{equation}
where $a<\rho_3<\rho_1<1/a$, and from (\ref{qsr}),
\begin{equation}\label{DAqsr}
q_{2r+\epsilon_1,2s+\epsilon_2}(u,v)=
\frac {\mathbb{I}_{2r+\epsilon_1<2s+\epsilon_2}}{2\pi \mathrm{i}}\int_{\gamma_1}\frac{(1+az)^{s-r}}{(1-a/z)^{s-r+\epsilon_2-\epsilon_1}}z^{u-v}\frac{dz}{z}.
\end{equation}
It follows from (\ref{OneAzteckernel2}) that
\begin{align}\label{MstaroneArel}
&-q_{2r+\epsilon_1,2s+\epsilon_2}(u,v)+M_{0,2n+1}^*(2r+\epsilon_1,u;2s+\epsilon_1,v)\\
&=(-1)^{u-v}K_{n+1}^{\text{OneAz}}\big(2(n-r+1)-\epsilon_1,m+1-u;2(n-s+1)-\epsilon_2,m+1-v\big)\notag
\end{align}
Combining this with (\ref{finiteMkernel}) we have established the following theorem.

\begin{thma}\label{thm:DA}
The outlier particles in the Double Aztec diamond form a determinantal point process with correlation kernel given by
\begin{align}\label{DAkernel}
&K_{n,m}^{\text{DoubleAz}}(2r+\epsilon_1,u;2s+\epsilon_2,v)\\
&=(-1)^{u-v}K_{n+1}^{\text{OneAz}}\big(2(n-r+1)-\epsilon_1,m+1-u;2(n-s+1)-\epsilon_2,m+1-v\big)\notag\\
&+\sum_{k=2m+1}^\infty\big((I-\mathcal{K}_0)^{-1}_{2m+1}a_{2s+\epsilon_2,v}\big)(k)b_{2r+\epsilon_1,u}(k),\notag
\end{align}
where $\mathcal{K}_0$, $a_{2s+\epsilon_2,v}$ and $b_{2r+\epsilon_1,u}$ are given by (\ref{DAKzero}) and (\ref{DAabformulas}), $K_{n+1}^{\text{OneAz}}$ by
(\ref{OneAzteckernel2}) or (\ref{OneAzteckernel}), and $(I-\mathcal{K}_0)^{-1}_{2m+1}$ is the inverse acting on the space $\ell^2(\{2m+1,2m+2,\dots\})$.
\end{thma}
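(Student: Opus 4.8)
The plan is to obtain the outlier kernel as the \emph{dual} of the inlier kernel and then invoke the dual-kernel formula (\ref{dualKLRMfinal}) of Theorem \ref{thm:KMfinite}. First I would observe that the inlier particles are precisely the point process attached to the product measure (\ref{Aztecproductmeasure}) with transition functions (\ref{Aztectransition}), taken with $N=2n+1$ and $M=2m+1$ and with coinciding boundary data $x_j^0=x_j^N=m+1-j$; by the discussion of section \ref{sec:2.2.3} this is the finite Toeplitz setting with $L=0$, $R=2n+1$ and Wiener--Hopf factors (\ref{phiAzpm}). Superimposing the two diagrams in figure \ref{fig:InlierOutlierpaths} shows that the outlier configuration is the complement of the inlier configuration, so the duality recorded at (\ref{holeprobFredholm}) applies and the outlier correlation kernel equals $K^*_{0,2n+1,2m+1}$, which Theorem \ref{thm:KMfinite} gives explicitly by (\ref{dualKLRMfinal}).

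The next step is computational: substitute the explicit factors (\ref{phiAzpm}) into the defining double integrals (\ref{Kzero}), (\ref{abformulas}), (\ref{Mstar}) and into the single integral (\ref{qsr}). This produces the concrete expressions (\ref{DAKzero}), (\ref{DAabformulas}), (\ref{DAMstar}) and (\ref{DAqsr}), with all contours placed in the annulus $a<\,\cdot\,<1/a$ dictated by the poles of (\ref{phiAzpm}); the ordering (\ref{radii}) is the specialization of the general requirement (\ref{radii1}). In particular the correction sum in (\ref{dualKLRMfinal}) is identified verbatim with the last line of (\ref{DAkernel}), so that term needs no further work.

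The heart of the argument is the identity (\ref{MstaroneArel}), which recasts the sum of the single-integral term and the double integral $M^*_{0,2n+1}$ of (\ref{dualKLRMfinal}) as a reflected One Aztec kernel. I would prove it by applying to (\ref{DAMstar}) the substitution $z\mapsto -1/z$, $w\mapsto -1/w$ --- the very change of variables that turned (\ref{OneAzteckernel}) into (\ref{OneAzteckernel2}). Under this map the integrand of (\ref{DAMstar}) is carried onto the double-integral part of (\ref{OneAzteckernel2}) evaluated at the reflected arguments $2r+\epsilon_1\mapsto 2(n-r+1)-\epsilon_1$ and $u\mapsto m+1-u$ (and likewise in the $s,v$ variables); since (\ref{OneAzteckernel2}) is a formula for $(-1)^{u-v}K^{\text{OneAz}}$ and $(-1)^{(m+1-u)-(m+1-v)}=(-1)^{u-v}$, the prefactor $(-1)^{u-v}$ of (\ref{MstaroneArel}) appears automatically. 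One then checks that the single integral (\ref{DAqsr}), with its indicator, reproduces exactly the indicator term of (\ref{OneAzteckernel2}) at the reflected arguments, so that the single plus double integral collapses to $(-1)^{u-v}K^{\text{OneAz}}_{n+1}$ with the shifted indices. Inserting (\ref{MstaroneArel}) into (\ref{dualKLRMfinal}) and keeping the correction sum then yields (\ref{DAkernel}).

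I expect the matching (\ref{MstaroneArel}) to be the main obstacle, since it is a delicate bookkeeping of exponents, reflected contour radii, and the two indicator conventions. One must in particular track how the roles of $r$ and $s$ interchange under reflection, so that the indicator of the single integral flips to the correct orientation, how the labels $\epsilon_1,\epsilon_2$ transform, and how the inner versus outer placement of the contours --- encoded in the $(z-w)$ rather than $(w-z)$ denominator of (\ref{DAMstar}) --- is what makes the two double integrals agree. Everything else in the proof is a direct specialization of Theorem \ref{thm:KMfinite} and the explicit One Aztec formulas.
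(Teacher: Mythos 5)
Your proposal follows essentially the same route as the paper: identify the inlier particles with the finite-$M$ Toeplitz setting of section \ref{sec:2.2.3} (with $L=0$, $R=2n+1$, $M=2m+1$ and the Wiener--Hopf factors (\ref{phiAzpm})), pass to the outlier particles as the dual process via (\ref{dualKLRMfinal}), specialize the integrals to obtain (\ref{DAKzero})--(\ref{DAqsr}), and establish the key identity (\ref{MstaroneArel}) by the same reflection $z\mapsto -1/z$, $w\mapsto -1/w$ that relates (\ref{OneAzteckernel}) to (\ref{OneAzteckernel2}). You also correctly single out (\ref{MstaroneArel}) --- the bookkeeping of indicators, reflected indices and contour placements --- as the only delicate point, which is exactly where the paper's derivation does its real work.
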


This theorem was established in \cite{AJvM}.

\begin{remark} {\rm There is a dimer model corresponding to the Double Aztec diamond analogously to the case of the Aztec diamond. It is possible to write a formula for this 
dimer model using $K_{n,m}^{\text{DoubleAz}}$, see \cite{ACJvM}.}
\end{remark}



\section{Interlacing particle systems} \label{sec:5}
Let $y^r=(y_1^r,\dots,y_r^r)\in\mathbb{Z}^r$, $1\le r\le n$. We say that $y^1,\dots, y^n$ is a (discrete) \emph{interlacing particle system} (or a \emph{Gelfand-Tsetlin pattern}) if
\begin{equation}\label{yinterlacing}
y_i^{r+1}\ge y_i^r>y_{i+1}^{r+1},
\end{equation}
for $1\le i<r$, $1\le r<n$. If we think of $y^r$ as the positions of $r$ particles on row $r$, where rows go upwards, then condition (\ref{yinterlacing}) says that
the particles in row $r$ interlace those of row $r+1$, see figure \ref{figTilHalfHexT}.  We will consider random uniform interlacing particle system with a fixed top row
\begin{equation}\label{yn}
y^n=(y_1^n,\dots,y_n^n)=(x_1,\dots, x_n),
\end{equation}
where $x_1,\dots,x_n$ are given. All configurations $y^1,\dots, y^n$ satistying (\ref{yinterlacing}) and (\ref{yn}) are equally probable. 
The lozenge tiling interpretation corresponding to figure \ref{figTilHalfHexT} is shown in figure \ref{figTilHalfHex}. They are related by a simple map.
In the random lozenge tiling interpretation this
corresponds to fixing lozenges in the top row and then considering uniform tilings within a region with these lozenges fixed. In this way we can for example get a uniform
random tiling of a hexagon, compare figure \ref{figEquivIntPart}.

\begin{figure}[H]
\centering
\begin{tikzpicture}[xscale=1/3,yscale=1/3]
\draw (1,0) \lozbt;
\draw (2,0) \lozbt;
\draw (3,0) \lozbt;
\draw (4,0) \lozyt;
\draw (4,0) \lozrt;
\draw (5,0) \lozrt;
\draw (6,0) \lozrt;
\draw (7,0) \lozrt;
\draw (8,0) \lozrt;
\draw (0,1) \lozbt;
\draw (1,1) \lozyt;
\draw (1,1) \lozrt;
\draw (2,1) \lozrt;
\draw (4,1) \lozbt;
\draw (5,1) \lozbt;
\draw (6,1) \lozyt;
\draw (6,1) \lozrt;
\draw (7,1) \lozrt;
\draw (8,1) \lozrt;

\draw (-1,2) \lozbt;
\draw (0,2) \lozyt;
\draw (1,2) \lozbt;
\draw (2,2) \lozbt;
\draw (3,2) \lozyt;
\draw (3,2) \lozrt;
\draw (4,2) \lozrt;
\draw (6,2) \lozbt;
\draw (7,2) \lozyt;
\draw (7,2) \lozrt;
\draw (8,2) \lozrt;
\draw (-2,3) \lozbt;
\draw (-1,3) \lozyt;
\draw (0,3) \lozbt;
\draw (1,3) \lozyt;
\draw (1,3) \lozrt;
\draw (3,3) \lozbt;
\draw (4,3) \lozbt;
\draw (5,3) \lozbt;
\draw (6,3) \lozyt;
\draw (7,3) \lozbt;
\draw (8,3) \lozyt;
\draw (8,3) \lozrt;
\draw (-3,4) \lozyt;
\draw (-3,4) \lozrt;
\draw (-1,4) \lozyt;
\draw (-1,4) \lozrt;
\draw (1,4) \lozbt;
\draw (2,4) \lozbt;
\draw (3,4) \lozyt;
\draw (3,4) \lozrt;
\draw (4,4) \lozrt;
\draw (6,4) \lozyt;
\draw (6,4) \lozrt;
\draw (8,4) \lozyt;
\draw (8,4) \lozrt;
\draw (-4,5) \lozyt;
\draw (-3,5) \lozbt;
\draw (-2,5) \lozyt;
\draw (-1,5) \lozbt;
\draw (0,5) \lozbt;
\draw (1,5) \lozyt;
\draw (1,5) \lozrt;
\draw (3,5) \lozbt;
\draw (4,5) \lozyt;
\draw (4,5) \lozrt;
\draw (6,5) \lozyt;
\draw (6,5) \lozrt;
\draw (8,5) \lozbt;
\draw (9,5) \lozyt;
\draw (-5,6) \lozyt;
\draw (-4,6) \lozyt;
\draw (-4,6) \lozrt;
\draw (-2,6) \lozyt;
\draw (-2,6) \lozrt;
\draw (-1,6) \lozrt;
\draw (1,6) \lozbt;
\draw (2,6) \lozyt;
\draw (2,6) \lozrt;
\draw (4,6) \lozyt;
\draw (4,6) \lozrt;
\draw (6,6) \lozbt;
\draw (7,6) \lozyt;
\draw (7,6) \lozrt;
\draw (9,6) \lozyt;
\draw (-6,7) \lozyt;
\draw (-5,7) \lozyt;
\draw (-4,7) \lozyt;
\draw (-4,7) \lozrt;
\draw (-2,7) \lozbt;
\draw (-1,7) \lozbt;
\draw (0,7) \lozyt;
\draw (0,7) \lozrt;
\draw (2,7) \lozbt;
\draw (3,7) \lozyt;
\draw (4,7) \lozyt;
\draw (4,7) \lozrt;
\draw (5,7) \lozrt;
\draw (7,7) \lozyt;
\draw (7,7) \lozrt;
\draw (9,7) \lozyt;

\filldraw (3.5,1)  circle (3pt);


\filldraw (0.5,2) circle (3pt);
\filldraw (5.5,2) circle (3pt);


\filldraw (-0.5,3) circle (3pt);

\filldraw (2.5,3) circle (3pt);

\filldraw (6.5,3) circle (3pt);

\filldraw (-1.5,4) circle (3pt);

\filldraw (0.5,4) circle (3pt);

\filldraw (5.5,4) circle (3pt);

\filldraw (7.5,4) circle (3pt);
\filldraw (-3.5,5) circle (3pt);

\filldraw (-1.5,5) circle (3pt);

\filldraw (2.5,5) circle (3pt);

\filldraw (5.5,5) circle (3pt);

\filldraw (7.5,5) circle (3pt);

\filldraw (-4.5,6) circle (3pt);

\filldraw (-2.5,6) circle (3pt);

\filldraw (0.5,6) circle (3pt);

\filldraw (3.5,6) circle (3pt);

\filldraw (5.5,6) circle (3pt);

\filldraw (8.5,6) circle (3pt);

\filldraw (-5.5,7) circle (3pt);

\filldraw (-4.5,7) circle (3pt);

\filldraw (-2.5,7) circle (3pt);

\filldraw (1.5,7) circle (3pt);

\filldraw (3.5,7) circle (3pt);

\filldraw (6.5,7) circle (3pt);

\filldraw (8.5,7) circle (3pt);
\draw (-6.5,8) circle (3pt);

\draw (-5.5,8) circle (3pt);

\draw (-4.5,8) circle (3pt);

\draw (-0.5,8) circle (3pt);

\draw (2.5,8) circle (3pt);

\draw (3.5,8) circle (3pt);

\draw (6.5,8) circle (3pt);

\draw (8.5,8) circle (3pt);

\end{tikzpicture}
\caption{\label{figTilHalfHexT}An example of an interlacing particle system and an associated tiling. The unfilled circles represent the
fixed particles.}
\end{figure}
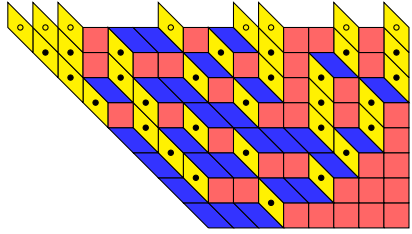

\begin{figure}[H]
\centering
\begin{tikzpicture}[xscale=1/3,yscale=1/3]
\draw (1,0) \lozb;
\draw (2,0) \lozb;
\draw (3,0) \lozb;
\draw (4,0) \loza;
\draw (4,0) \lozc;
\draw (5,0) \lozc;
\draw (6,0) \lozc;
\draw (7,0) \lozc;
\draw (8,0) \lozc;
\draw (.5,{sqrt(3)/2}) \lozb;
\draw (1.5,{sqrt(3)/2}) \loza;
\draw (1.5,{sqrt(3)/2}) \lozc;
\draw (2.5,{sqrt(3)/2}) \lozc;
\draw (4.5,{sqrt(3)/2}) \lozb;
\draw (5.5,{sqrt(3)/2}) \lozb;
\draw (6.5,{sqrt(3)/2}) \loza;
\draw (6.5,{sqrt(3)/2}) \lozc;
\draw (7.5,{sqrt(3)/2}) \lozc;
\draw (8.5,{sqrt(3)/2}) \lozc;
\draw (0,{sqrt(3)}) \lozb;
\draw (1,{sqrt(3)}) \loza;
\draw (2,{sqrt(3)}) \lozb;
\draw (3,{sqrt(3)}) \lozb;
\draw (4,{sqrt(3)}) \loza;
\draw (4,{sqrt(3)}) \lozc;
\draw (5,{sqrt(3)}) \lozc;
\draw (7,{sqrt(3)}) \lozb;
\draw (8,{sqrt(3)}) \loza;
\draw (8,{sqrt(3)}) \lozc;
\draw (9,{sqrt(3)}) \lozc;
\draw (-.5,{(1.5)*sqrt(3)}) \lozb;
\draw (.5,{(1.5)*sqrt(3)}) \loza;
\draw (1.5,{(1.5)*sqrt(3)}) \lozb;
\draw (2.5,{(1.5)*sqrt(3)}) \loza;
\draw (2.5,{(1.5)*sqrt(3)}) \lozc;
\draw (4.5,{(1.5)*sqrt(3)}) \lozb;
\draw (5.5,{(1.5)*sqrt(3)}) \lozb;
\draw (6.5,{(1.5)*sqrt(3)}) \lozb;
\draw (7.5,{(1.5)*sqrt(3)}) \loza;
\draw (8.5,{(1.5)*sqrt(3)}) \lozb;
\draw (9.5,{(1.5)*sqrt(3)}) \loza;
\draw (9.5,{(1.5)*sqrt(3)}) \lozc;
\draw (-1,{2*sqrt(3)}) \loza;
\draw (-1,{2*sqrt(3)}) \lozc;
\draw (1,{2*sqrt(3)}) \loza;
\draw (1,{2*sqrt(3)}) \lozc;
\draw (3,{2*sqrt(3)}) \lozb;
\draw (4,{2*sqrt(3)}) \lozb;
\draw (5,{2*sqrt(3)}) \loza;
\draw (5,{2*sqrt(3)}) \lozc;
\draw (6,{2*sqrt(3)}) \lozc;
\draw (8,{2*sqrt(3)}) \loza;
\draw (8,{2*sqrt(3)}) \lozc;
\draw (10,{2*sqrt(3)}) \loza;
\draw (10,{2*sqrt(3)}) \lozc;
\draw (-1.5,{(2.5)*sqrt(3)}) \loza;
\draw (-.5,{(2.5)*sqrt(3)}) \lozb;
\draw (.5,{(2.5)*sqrt(3)}) \loza;
\draw (1.5,{(2.5)*sqrt(3)}) \lozb;
\draw (2.5,{(2.5)*sqrt(3)}) \lozb;
\draw (3.5,{(2.5)*sqrt(3)}) \loza;
\draw (3.5,{(2.5)*sqrt(3)}) \lozc;
\draw (5.5,{(2.5)*sqrt(3)}) \lozb;
\draw (6.5,{(2.5)*sqrt(3)}) \loza;
\draw (6.5,{(2.5)*sqrt(3)}) \lozc;
\draw (8.5,{(2.5)*sqrt(3)}) \loza;
\draw (8.5,{(2.5)*sqrt(3)}) \lozc;
\draw (10.5,{(2.5)*sqrt(3)}) \lozb;
\draw (11.5,{(2.5)*sqrt(3)}) \loza;
\draw (-2,{3*sqrt(3)}) \loza;
\draw (-1,{3*sqrt(3)}) \loza;
\draw (-1,{3*sqrt(3)}) \lozc;
\draw (1,{3*sqrt(3)}) \loza;
\draw (1,{3*sqrt(3)}) \lozc;
\draw (2,{3*sqrt(3)}) \lozc;
\draw (4,{3*sqrt(3)}) \lozb;
\draw (5,{3*sqrt(3)}) \loza;
\draw (5,{3*sqrt(3)}) \lozc;
\draw (7,{3*sqrt(3)}) \loza;
\draw (7,{3*sqrt(3)}) \lozc;
\draw (9,{3*sqrt(3)}) \lozb;
\draw (10,{3*sqrt(3)}) \loza;
\draw (10,{3*sqrt(3)}) \lozc;
\draw (12,{3*sqrt(3)}) \loza;
\draw (-2.5,{(3.5)*sqrt(3)}) \loza;
\draw (-1.5,{(3.5)*sqrt(3)}) \loza;
\draw (-.5,{(3.5)*sqrt(3)}) \loza;
\draw (-.5,{(3.5)*sqrt(3)}) \lozc;
\draw (1.5,{(3.5)*sqrt(3)}) \lozb;
\draw (2.5,{(3.5)*sqrt(3)}) \lozb;
\draw (3.5,{(3.5)*sqrt(3)}) \loza;
\draw (3.5,{(3.5)*sqrt(3)}) \lozc;
\draw (5.5,{(3.5)*sqrt(3)}) \lozb;
\draw (6.5,{(3.5)*sqrt(3)}) \loza;
\draw (7.5,{(3.5)*sqrt(3)}) \loza;
\draw (7.5,{(3.5)*sqrt(3)}) \lozc;
\draw (8.5,{(3.5)*sqrt(3)}) \lozc;
\draw (10.5,{(3.5)*sqrt(3)}) \loza;
\draw (10.5,{(3.5)*sqrt(3)}) \lozc;
\draw (12.5,{(3.5)*sqrt(3)}) \loza;

\filldraw (4,{sqrt(3)/2}) circle (3pt);
\filldraw (1.5,{sqrt(3)}) circle (3pt);
\filldraw (6.5,{sqrt(3)}) circle (3pt);
\filldraw (1,{(1.5)*sqrt(3)}) circle (3pt);
\filldraw (4,{(1.5)*sqrt(3)}) circle (3pt);
\filldraw (8,{(1.5)*sqrt(3)}) circle (3pt);
\filldraw (.5,{2*sqrt(3)}) circle (3pt);
\filldraw (2.5,{2*sqrt(3)}) circle (3pt);
\filldraw (7.5,{2*sqrt(3)}) circle (3pt);
\filldraw (9.5,{2*sqrt(3)}) circle (3pt);
\filldraw (-1,{(2.5)*sqrt(3)}) circle (3pt);
\filldraw (1,{(2.5)*sqrt(3)}) circle (3pt);
\filldraw (5,{(2.5)*sqrt(3)}) circle (3pt);
\filldraw (8,{(2.5)*sqrt(3)}) circle (3pt);
\filldraw (10,{(2.5)*sqrt(3)}) circle (3pt);
\filldraw (-1.5,{3*sqrt(3)}) circle (3pt);
\filldraw (.5,{3*sqrt(3)}) circle (3pt);
\filldraw (3.5,{3*sqrt(3)}) circle (3pt);
\filldraw (6.5,{3*sqrt(3)}) circle (3pt);
\filldraw (8.5,{3*sqrt(3)}) circle (3pt);
\filldraw (11.5,{3*sqrt(3)}) circle (3pt);
\filldraw (-2,{(3.5)*sqrt(3)}) circle (3pt);
\filldraw (-1,{(3.5)*sqrt(3)}) circle (3pt);
\filldraw (1,{(3.5)*sqrt(3)}) circle (3pt);
\filldraw (5,{(3.5)*sqrt(3)}) circle (3pt);
\filldraw (7,{(3.5)*sqrt(3)}) circle (3pt);
\filldraw (10,{(3.5)*sqrt(3)}) circle (3pt);
\filldraw (12,{(3.5)*sqrt(3)}) circle (3pt);
\draw (-2.5,{4*sqrt(3)}) circle (3pt);
\draw (-1.5,{4*sqrt(3)}) circle (3pt);
\draw (-.5,{4*sqrt(3)}) circle (3pt);
\draw (3.5,{4*sqrt(3)}) circle (3pt);
\draw (6.5,{4*sqrt(3)}) circle (3pt);
\draw (7.5,{4*sqrt(3)}) circle (3pt);
\draw (10.5,{4*sqrt(3)}) circle (3pt);
\draw (12.5,{4*sqrt(3)}) circle (3pt);

\draw (4.5,-.4); 

\end{tikzpicture}
\caption{The lozenge tiling corresponding to figure \ref{figTilHalfHexT}. The lozenges with
the unfilled circles are
fixed and the rest are chosen uniformly at random.}
\label{figTilHalfHex}
\end{figure}
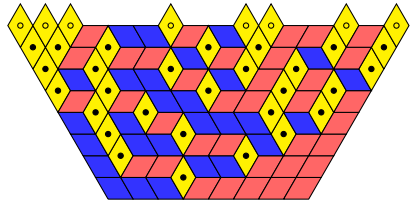

\begin{figure}[H]
\centering
\begin{tikzpicture}[xscale=1/2,yscale=1/2]

\draw (0,0) \lozb;
\draw (1,0) \lozb;
\draw (2,0) \loza;
\draw (2,0) \lozc;
\draw (3,0) \lozc;
\draw (-.5,{sqrt(3)/2}) \lozb;
\draw (.5,{sqrt(3)/2}) \lozb;
\draw (1.5,{sqrt(3)/2}) \loza;
\draw (2.5,{sqrt(3)/2}) \lozb;
\draw (3.5,{sqrt(3)/2}) \loza;
\draw (3.5,{sqrt(3)/2}) \lozc;
\draw (-1,{sqrt(3)}) \lozb;
\draw (0,{sqrt(3)}) \loza;
\draw (0,{sqrt(3)}) \lozc;
\draw (2,{sqrt(3)}) \lozb;
\draw (3,{sqrt(3)}) \loza;
\draw (4,{sqrt(3)}) \lozb;
\draw (5,{sqrt(3)}) \loza;
\draw (-1.5,{(1.5)*sqrt(3)}) \loza;
\draw (-1.5,{(1.5)*sqrt(3)}) \lozc;
\draw (.5,{(1.5)*sqrt(3)}) \loza;
\draw (.5,{(1.5)*sqrt(3)}) \lozc;
\draw (1.5,{(1.5)*sqrt(3)}) \lozc;
\draw (3.5,{(1.5)*sqrt(3)}) \loza;
\draw (3.5,{(1.5)*sqrt(3)}) \lozc;
\draw (5.5,{(1.5)*sqrt(3)}) \loza;
\draw (-1,{2*sqrt(3)}) \lozb;
\draw (0,{2*sqrt(3)}) \loza;
\draw (1,{2*sqrt(3)}) \lozb;
\draw (2,{2*sqrt(3)}) \loza;
\draw (2,{2*sqrt(3)}) \lozc;
\draw (4,{2*sqrt(3)}) \loza;
\draw (4,{2*sqrt(3)}) \lozc;
\draw (-1.5,{(2.5)*sqrt(3)}) \lozc;
\draw (.5,{(2.5)*sqrt(3)}) \loza;
\draw (.5,{(2.5)*sqrt(3)}) \lozc;
\draw (2.5,{(2.5)*sqrt(3)}) \loza;
\draw (2.5,{(2.5)*sqrt(3)}) \lozc;
\draw (4.5,{(2.5)*sqrt(3)}) \lozb;
\draw (-1,{3*sqrt(3)}) \lozc;
\draw (1,{3*sqrt(3)}) \lozb;
\draw (2,{3*sqrt(3)}) \loza;
\draw (3,{3*sqrt(3)}) \lozb;
\draw (4,{3*sqrt(3)}) \lozb;
\draw (-.5,{(3.5)*sqrt(3)}) \lozc;
\draw (.5,{(3.5)*sqrt(3)}) \lozc;
\draw (2.5,{(3.5)*sqrt(3)}) \lozb;
\draw (3.5,{(3.5)*sqrt(3)}) \lozb;

\filldraw (2,{(.5)*sqrt(3)}) circle (3pt);
\filldraw (1.5,{sqrt(3)}) circle (3pt);
\filldraw (3.5,{sqrt(3)}) circle (3pt);
\filldraw (0,{(1.5)*sqrt(3)}) circle (3pt);
\filldraw (3,{(1.5)*sqrt(3)}) circle (3pt);
\filldraw (5,{(1.5)*sqrt(3)}) circle (3pt);
\filldraw (-1.5,{2*sqrt(3)}) circle (3pt);
\filldraw (.5,{2*sqrt(3)}) circle (3pt);
\filldraw (3.5,{2*sqrt(3)}) circle (3pt);
\filldraw (5.5,{2*sqrt(3)}) circle (3pt);
\filldraw (0,{(2.5)*sqrt(3)}) circle (3pt);
\filldraw (2,{(2.5)*sqrt(3)}) circle (3pt);
\filldraw (4,{(2.5)*sqrt(3)}) circle (3pt);
\filldraw (0.5,{3*sqrt(3)}) circle (3pt);
\filldraw (2.5,{3*sqrt(3)}) circle (3pt);
\filldraw (2,{(3.5)*sqrt(3)}) circle (3pt);

\draw [dotted] (5,{(0.5)*sqrt(3)}) --++ (3.8,0);
\draw (10,{(0.55)*sqrt(3)}) node {row $1$};
\draw [dotted] (11.2,{(0.5)*sqrt(3)}) --++ (3.8,0);
\draw [dotted] (5.5,{sqrt(3)}) --++ (3.3,0);
\draw (10,{(1.05)*sqrt(3)}) node {row $2$};
\draw [dotted] (11.2,{sqrt(3)}) --++ (3.3,0);
\draw [dotted] (6,{(1.5)*sqrt(3)}) --++ (2.8,0);
\draw (10,{(1.55)*sqrt(3)}) node {row $3$};
\draw [dotted] (11.2,{(1.5)*sqrt(3)}) --++ (2.8,0);
\draw [dotted] (6.5,{2*sqrt(3)}) --++ (2.3,0);
\draw (10,{(2.05)*sqrt(3)}) node {row $4$};
\draw [dotted] (11.2,{2*sqrt(3)}) --++ (2.3,0);
\draw [dotted] (6,{(2.5)*sqrt(3)}) --++ (2.3,0);
\draw (9.5,{(2.55)*sqrt(3)}) node {row $5$};
\draw [dotted] (10.7,{(2.5)*sqrt(3)}) --++ (2.3,0);
\draw [dotted] (5.5,{3*sqrt(3)}) --++ (2.3,0);
\draw (9,{(3.05)*sqrt(3)}) node {row $6$};
\draw [dotted] (10.2,{3*sqrt(3)}) --++ (2.3,0);
\draw [dotted] (5,{(3.5)*sqrt(3)}) --++ (2.3,0);
\draw (8.5,{(3.55)*sqrt(3)}) node {row $7$};
\draw [dotted] (9.7,{(3.5)*sqrt(3)}) --++ (2.3,0);
\draw [dotted] (4.5,{4*sqrt(3)}) --++ (2.3,0);
\draw (8,{(4.05)*sqrt(3)}) node {row $8$};
\draw [dotted] (9.2,{4*sqrt(3)}) --++ (2.3,0);

\draw (16,0) \lozb;
\draw (17,0) \lozb;
\draw (18,0) \loza;
\draw (18,0) \lozc;
\draw (19,0) \lozc;
\draw (15.5,{sqrt(3)/2}) \lozb;
\draw (16.5,{sqrt(3)/2}) \lozb;
\draw (17.5,{sqrt(3)/2}) \loza;
\draw (18.5,{sqrt(3)/2}) \lozb;
\draw (19.5,{sqrt(3)/2}) \loza;
\draw (19.5,{sqrt(3)/2}) \lozc;
\draw (15,{sqrt(3)}) \lozb;
\draw (16,{sqrt(3)}) \loza;
\draw (16,{sqrt(3)}) \lozc;
\draw (18,{sqrt(3)}) \lozb;
\draw (19,{sqrt(3)}) \loza;
\draw (20,{sqrt(3)}) \lozb;
\draw (21,{sqrt(3)}) \loza;
\draw (14.5,{(1.5)*sqrt(3)}) \loza;
\draw (14.5,{(1.5)*sqrt(3)}) \lozc;
\draw (16.5,{(1.5)*sqrt(3)}) \loza;
\draw (16.5,{(1.5)*sqrt(3)}) \lozc;
\draw (17.5,{(1.5)*sqrt(3)}) \lozc;
\draw (17.5,{(1.5)*sqrt(3)}) \lozc;
\draw (19.5,{(1.5)*sqrt(3)}) \loza;
\draw (19.5,{(1.5)*sqrt(3)}) \lozc;
\draw (21.5,{(1.5)*sqrt(3)}) \loza;
\draw (14,{2*sqrt(3)}) \loza;
\draw (15,{2*sqrt(3)}) \lozb;
\draw (16,{2*sqrt(3)}) \loza;
\draw (17,{2*sqrt(3)}) \lozb;
\draw (18,{2*sqrt(3)}) \loza;
\draw (18,{2*sqrt(3)}) \lozc;
\draw (20,{2*sqrt(3)}) \loza;
\draw (20,{2*sqrt(3)}) \lozc;
\draw (22,{2*sqrt(3)}) \loza;
\draw (13.5,{(2.5)*sqrt(3)}) \loza;
\draw (14.5,{(2.5)*sqrt(3)}) \loza;
\draw (14.5,{(2.5)*sqrt(3)}) \lozc;
\draw (16.5,{(2.5)*sqrt(3)}) \loza;
\draw (16.5,{(2.5)*sqrt(3)}) \lozc;
\draw (18.5,{(2.5)*sqrt(3)}) \loza;
\draw (18.5,{(2.5)*sqrt(3)}) \lozc;
\draw (20.5,{(2.5)*sqrt(3)}) \lozb;
\draw (21.5,{(2.5)*sqrt(3)}) \loza;
\draw (22.5,{(2.5)*sqrt(3)}) \loza;
\draw (13,{3*sqrt(3)}) \loza;
\draw (14,{3*sqrt(3)}) \loza;
\draw (15,{3*sqrt(3)}) \loza;
\draw (15,{3*sqrt(3)}) \lozc;
\draw (17,{3*sqrt(3)}) \lozb;
\draw (18,{3*sqrt(3)}) \loza;
\draw (19,{3*sqrt(3)}) \lozb;
\draw (20,{3*sqrt(3)}) \lozb;
\draw (21,{3*sqrt(3)}) \loza;
\draw (22,{3*sqrt(3)}) \loza;
\draw (23,{3*sqrt(3)}) \loza;
\draw (12.5,{(3.5)*sqrt(3)}) \loza;
\draw (13.5,{(3.5)*sqrt(3)}) \loza;
\draw (14.5,{(3.5)*sqrt(3)}) \loza;
\draw (15.5,{(3.5)*sqrt(3)}) \loza;
\draw (15.5,{(3.5)*sqrt(3)}) \lozc;
\draw (16.5,{(3.5)*sqrt(3)}) \lozc;
\draw (18.5,{(3.5)*sqrt(3)}) \lozb;
\draw (19.5,{(3.5)*sqrt(3)}) \lozb;
\draw (20.5,{(3.5)*sqrt(3)}) \loza;
\draw (21.5,{(3.5)*sqrt(3)}) \loza;
\draw (22.5,{(3.5)*sqrt(3)}) \loza;
\draw (23.5,{(3.5)*sqrt(3)}) \loza;

\filldraw (18,{(.5)*sqrt(3)}) circle (3pt);
\filldraw (17.5,{sqrt(3)}) circle (3pt);
\filldraw (19.5,{sqrt(3)}) circle (3pt);
\filldraw (16,{(1.5)*sqrt(3)}) circle (3pt);
\filldraw (19,{(1.5)*sqrt(3)}) circle (3pt);
\filldraw (21,{(1.5)*sqrt(3)}) circle (3pt);
\filldraw (14.5,{2*sqrt(3)}) circle (3pt);
\filldraw (16.5,{2*sqrt(3)}) circle (3pt);
\filldraw (19.5,{2*sqrt(3)}) circle (3pt);
\filldraw (21.5,{2*sqrt(3)}) circle (3pt);
\draw (14,{(2.5)*sqrt(3)}) circle (3pt);
\filldraw (16,{(2.5)*sqrt(3)}) circle (3pt);
\filldraw (18,{(2.5)*sqrt(3)}) circle (3pt);
\filldraw (20,{(2.5)*sqrt(3)}) circle (3pt);
\draw (22,{(2.5)*sqrt(3)}) circle (3pt);
\draw (13.5,{3*sqrt(3)}) circle (3pt);
\draw (14.5,{3*sqrt(3)}) circle (3pt);
\filldraw (16.5,{3*sqrt(3)}) circle (3pt);
\filldraw (18.5,{3*sqrt(3)}) circle (3pt);
\draw (21.5,{3*sqrt(3)}) circle (3pt);
\draw (22.5,{3*sqrt(3)}) circle (3pt);
\draw (13,{(3.5)*sqrt(3)}) circle (3pt);
\draw (14,{(3.5)*sqrt(3)}) circle (3pt);
\draw (15,{(3.5)*sqrt(3)}) circle (3pt);
\filldraw (18,{(3.5)*sqrt(3)}) circle (3pt);
\draw (21,{(3.5)*sqrt(3)}) circle (3pt);
\draw (22,{(3.5)*sqrt(3)}) circle (3pt);
\draw (23,{(3.5)*sqrt(3)}) circle (3pt);
\draw (12.5,{4*sqrt(3)}) circle (3pt);
\draw (13.5,{4*sqrt(3)}) circle (3pt);
\draw (14.5,{4*sqrt(3)}) circle (3pt);
\draw (15.5,{4*sqrt(3)}) circle (3pt);
\draw (20.5,{4*sqrt(3)}) circle (3pt);
\draw (21.5,{4*sqrt(3)}) circle (3pt);
\draw (22.5,{4*sqrt(3)}) circle (3pt);
\draw (23.5,{4*sqrt(3)}) circle (3pt);

\end{tikzpicture}
\caption{Left: Equivalent interlaced particle configuration of a tiling of regular hexagon of size 4.
Right: Equivalent interlaced particle configuration with added fixed
lozenges/particles. The unfilled circles represent the fixed
particles, and this choice gives a uniform lozenge tiling of the regular hexagon.}
\label{figEquivIntPart}
\end{figure}
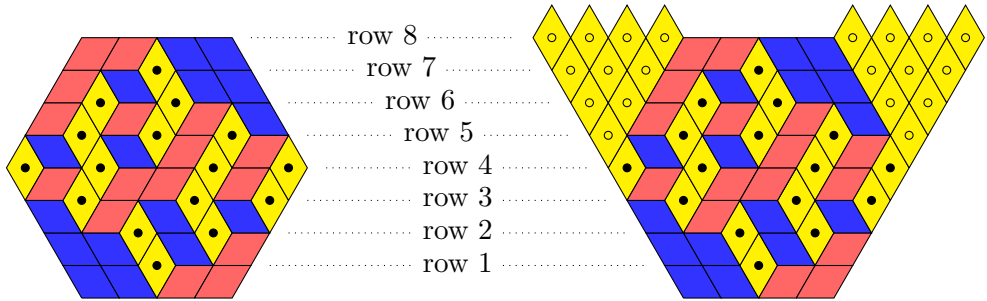

This uniform interlacing particle system is a determinantal point process with a kernel that we can compute. We have the following theorem.

\begin{thma}\label{thm:interlacing}
Consider a system of interlacing particles $y^1,\dots, y^n$ satisfying (\ref{yinterlacing}) and (\ref{yn}) with $x_1,\dots,x_n$ fixed, and choose such an interlacing particle configuration 
uniformly. The resulting particle system forms a determinantal point process. Let $\Gamma_u$ be a contour that contains $\{x_j\,;\,x_j\ge u\}$, but no other $x_j$, and let 
$\gamma$ be a contour that contains $\Gamma_u$ and $\{v-n+s,v-n+s+s,\dots,v\}$. Then a correlation kernel for the process is given by
\begin{align}\label{interlacingkernelfinal}
&K_n^{\text{Int}}(r,u;s,v)=-\mathbb{I}_{r<s}\mathbb{I}_{u\le v}\frac{(v-u+1)_{s-r-1}}{(s-r-1)!}\\
&+\frac{(n-s)!}{(n-r-1)!}\frac 1{(2\pi \mathrm{i})^2}\int_{\gamma}dw\int_{\Gamma_u}dz\frac{\prod_{k=u+r-n+1}^{u-1}(z-k)}{\prod_{k=v+s-n}^{v}(w-k)}\prod_{i=1}^n\frac{w-x_i}{z-x_i}\frac 1{w-z}.
\end{align}
\end{thma}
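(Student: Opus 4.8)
The plan is to realize the uniform interlacing measure as a measure of the product-of-determinants type (\ref{productmeasure}) and then to extract the kernel in closed form by the Cramer's rule method of Section \ref{sec:2.2.1}. First I would encode the constraints (\ref{yinterlacing}) as transition functions. Going up one level the $i$-th coordinate weakly increases, $y_i^{r+1}\ge y_i^r$, while the inequality $y_i^r>y_{i+1}^{r+1}$ supplies the non-collision; by the Lindstr\"om-Gessel-Viennot theorem (Theorem \ref{thm:LGV}) the indicator $\mathbb{I}_{y^r\prec y^{r+1}}$ of a single interlacing step equals a determinant $\det(p_{r,r+1}(x_j^r,x_k^{r+1}))$ once each level is padded with frozen virtual particles up to the common size $n$, with one-step transition $p_{r,r+1}(x,y)=\mathbb{I}_{x\le y}$ and with the frozen bottom playing the role of the left boundary $x^L$ and the fixed top row that of $x^R=(x_1,\dots,x_n)$. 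The uniform measure then has the form (\ref{productmeasure}), so Theorem \ref{productmeasurethm} immediately gives that the particles form a determinantal point process with kernel (\ref{generalK}); only the explicit evaluation remains.

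Next I would compute the composite transitions and deal with the inverse matrix. Composing the step function $s-r$ times gives $p_{r,s}(u,v)=\mathbb{I}_{u\le v}(v-u+1)_{s-r-1}/(s-r-1)!$, which is exactly the first term $-p_{r,s}(u,v)$ in (\ref{interlacingkernelfinal}). For the remaining piece $\tilde K$ I would apply (\ref{CramersKtilde}): I identify a difference operator $T_v$ and a shift $g(s,v)$ expressing $p_{L,s}(x_i^L,v)$ through $p_{L,R}(x_i^L,g(s,v))$, so that $\tilde K=T_v\sum_j p_{r,R}(u,x_j^R)\det A[j,g(s,v)]/\det A$. Because the boundary data is the fixed $n$-tuple $x_1,\dots,x_n$, the matrix $A$ of (\ref{matrixA}) and its column-replaced variants $A[j,\cdot]$ have polynomial entries whose determinants factor through Vandermonde determinants $\Delta_n$, so the ratio $\det A[j,g(s,v)]/\det A$ evaluates explicitly.

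The decisive step is to turn this ratio of Vandermonde-type determinants into the double contour integral. I would represent the sum over $j$ and the column replacement by residue calculus: the poles at $z=x_i$ coming from $1/(z-x_i)$ reproduce the sum over the top-row particles, and the monotone-path counting responsible for the Pochhammer factors becomes the products $\prod_{k=u+r-n+1}^{u-1}(z-k)$ and $\prod_{k=v+s-n}^{v}(w-k)$, while the prefactor $(n-s)!/(n-r-1)!$ tracks the padding by $n-r$ and $n-s$ virtual particles. Together these yield the integrand $\prod_{i=1}^n(w-x_i)/(z-x_i)$ times $1/(w-z)$. Finally I would read off the contours: the inner contour $\Gamma_u$ must enclose exactly the poles $x_j\ge u$ (the others cancelled by the zeros in $z$ of the numerator product), and the outer contour $\gamma$ must enclose $\Gamma_u$ together with the zeros $\{v-n+s,\dots,v\}$ of the denominator product in $w$, precisely as stated.

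The hard part will be this last conversion: carrying out the determinant evaluation and the residue bookkeeping so that the Vandermonde ratio collapses to $\prod_i(w-x_i)/(z-x_i)$ with the correct polynomial and factorial prefactors, while simultaneously pinning down the contours so that the spurious poles drop out and the single-integral term $-p_{r,s}$ emerges with the right sign. Setting up the virtual-particle padding consistently across levels of different sizes is the other delicate point, since an inconsistent choice would destroy the product-of-determinants structure required by Theorem \ref{productmeasurethm}.
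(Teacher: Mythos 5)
Your proposal follows essentially the same route as the paper: pad with frozen virtual particles so the uniform interlacing measure becomes a product of determinants with one-step transition $\mathbb{I}_{u\le v}=h_{v-u}(1)$, apply Theorem \ref{productmeasurethm}, use the Cramer's rule device of Section \ref{sec:2.2.1} with a finite-difference operator in $v$, evaluate the determinant ratio by a Vandermonde/Schur computation to get $\prod_{i\neq j}(v+s-n-x_i)/(x_j-x_i)$, and convert the resulting double sum to the double contour integral by residues. The only cosmetic difference is that you invoke Lindstr\"om-Gessel-Viennot for the single-step determinant where the paper uses the Jacobi-Trudi identity for $s_{\lambda^{(r+1)}/\lambda^{(r)}}(1)$; the substance and the identified difficulties match the paper's argument.
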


\begin{proof}
To the discrete interlacing particle system we can associate a sequence of partitions by putting
\begin{equation}\label{lambday}
\lambda_i^{(r)}=
\begin{cases}
y_i^r-x_n+i-n, &\text{for $1\le i\le r$,}\\
0 &\text{for $r< i\le n$,}
\end{cases}
\end{equation}
$1\le r\le n$. We can think of $y_i^r=x_n+n-i$, $r<i\le n$, corresponding to $\lambda_i^{(r)}=0$, as fixed "virtual particles" that have been added. It is convenient to let
$\lambda^{(0)}=\emptyset$ and correspondingly
\begin{equation}\label{yzero}
y_i^0=x_n+n-i,\quad 1\le i\le n.
\end{equation}
Note that
\begin{equation}\label{lambdan}
\lambda_i^{(n)}=x_i-x_n+i-n.
\end{equation}
By (\ref{Partitioninterlacing}), the interlacing condition translates into
\begin{equation}\label{lambdainterlacing}
\lambda^{(n)}\succ\lambda^{(n-1)}\succ\dots\succ\lambda^{(1)}\succ\lambda^{(0)}=\emptyset.
\end{equation}
Using (\ref{Schurinterlacing}),
we see that the uniform probability measure we are considering with a fixed top row (i.e. fixed $\lambda^{(n)}$) can be written as
\begin{equation}\label{interlacingmeasure}
\frac 1Z\prod_{r=0}^{n-1}s_{\lambda^{(r+1)}/\lambda^{(r)}}(1).
\end{equation}
Since we do not both start and end with an empty partition, this is not quite a Schur measure and we cannot apply the infinite Toeplitz matrix formalism. Since
$\ell(\lambda)=n$, the Jacob-Trudi identity (\ref{JacobiTrudi1}) shows that (\ref{interlacingmeasure}) can be written
\begin{equation}\label{interlacingPmeasure}
\frac 1Z\prod_{r=0}^{n-1}\det\big(h_{\lambda_i^{(r+1)}-\lambda_j^{(r)}-i+j}(1)\big)_{1\le i,j\le n}=
\frac 1Z\prod_{r=0}^{n-1}\det\big(p_{r,r+1}(y_i^r,y_j^{r+1})\big)_{1\le i,j\le n},
\end{equation}
where
\begin{equation}\label{prhvu}
p_{r,r+1}(u,v)=h_{v-u}(1).
\end{equation}
It follows from the general formalism in section \ref{sec:2.2} that the interlacing particle system is a determinantal point process with a correlation kernel given by
(\ref{generalK}). In order to givea more useful expression for this kernel we will use the Cramer's rule idea in section \ref{sec:2.2.1}. From (\ref{prhvu}) we get
\begin{equation}\label{prshvu}
p_{r,s}(u,v)=\mathbb{I}_{r<s}h_{v-u}(1^{s-r}).
\end{equation}
Given a function $f:\mathbb{Z}\mapsto\mathbb{C}$ let $\Delta_xf(x)=f(x+1)-f(x)$ be the finite difference operator with respect to the variable $x$.
\begin{lemma}\label{lem:Deltaformula}
We have that
\begin{equation}\label{Deltaformula}
\Delta_r^sh_r(1^N)=h_{r+s}(1^{N-s}).
\end{equation}
\end{lemma}
\begin{proof}
A simple computation shows that
\begin{equation*}
\sum_{r\in\mathbb{Z}}\Delta_rh_r(1^N)z^r=\sum_{r\in\mathbb{Z}}\Delta_rh_{r+1}(1^{N-1})z^r,
\end{equation*}
and repeated use gives (\ref{Deltaformula}).
\end{proof}

Let $A$ be the matrix
\begin{equation}\label{interlacingA}
A=\big(p_{0,n}(y_i^0,y_j^n)\big)_{1\le i,j\le n}=\big(h_{y_j^n-y_i^0}(1^n)\big)_{1\le i,j\le n},
\end{equation}
and let $A[j,s,v]$ be the same matrix but with column $j$ replaced by
\begin{equation*}
\left(\begin{matrix} h_{v+s-n-y_1^0}(1^n)\\
\vdots\\
h_{v+s-n-y_n}^0(1^n)
\end{matrix}\right).
\end{equation*}
By lemma \ref{Deltaformula},
\begin{equation*}
\Delta_v^{n-s}h_{v+s-n-y_i^0}(1^n)=h_{v-y_iô}(1^s),
\end{equation*}
and the argument in section \ref{sec:2.2.2} using Cramer's rule shows that the correlation kernel is given by
\begin{equation}\label{interlacingkernel}
K_n^{\text{Int}}(r,u;s,v)=-\mathbb{I}_{r<s}h_{v-u}(1^{s-r})+\tilde{K}_n(r,u;s,v)
\end{equation}
with
\begin{equation}\label{interlacingKtilde}
\tilde{K}_n(r,u;s,v)=\Delta_v^{n-s}\sum_{j=1}^nh_{y_j^n-u}(1^{n-r})\frac{\det A[j,s,v]}{\det a}.
\end{equation}
Now, by (\ref{yn}), (\ref{yzero}) and (\ref{ClassicalSchur}),
\begin{align*}
\det A=\det\big(h_{x_j-i-x_n-n}(1^n)\big)_{1\le i,j\le n}&=
\lim_{q\to 1}\det\big(h_{x_j-i-x_n-n}(1,q,\dots,q^{n-1})\big)_{1\le i,j\le n}\\
&=\lim_{q\to 1}\frac{\prod_{1\le i,j\le n}\bigg(q^{x_j-x_n}-q^{x_i-x_n}\bigg)}{\det\big(q^{(i-1)(n-j)}\big)_{1\le i,j\le n}}.
\end{align*}
It follows that
\begin{equation*}
\frac{\det A[j,s,v]}{\det a}=\prod_{\substack{i=1\\i\neq j}}^n\frac{v+s-n-x_i}{x_j-x_i},
\end{equation*}
and from (\ref{interlacingKtilde}) we obtain
\begin{equation}\label{interlacingKtilde2}
\tilde{K}_n(r,u;s,v)=\Delta_v^{n-s}\sum_{j=1}^nh_{y_j^n-u}(1^{n-r})\prod_{\substack{i=1\\i\neq j}}^n\frac{v+s-n-x_i}{x_j-x_i}.
\end{equation}
The formula $h_k(1^n)=\mathbb{I}_{k\ge 0}(k+1)_{n-1}/(n-1)!$ gives
\begin{equation}\label{hformula}
h_{x_j-u}(1^{n-r})=\mathbb{I}_{x_j\ge u}\frac 1{(n-r-1)!}\prod_{k=u+r-n+1}^{u-1}(x_j-k).
\end{equation}
For any function $f:\mathbb{Z}\mapsto\mathbb{C}$,
\begin{equation}\label{Deltaf}
(\Delta_v^{n-s}f)(v+s-n)=(n-s)!\sum_{\ell=v-n+s}^{u-1}\frac{f(\ell)}{\prod_{\substack{j=v-n+s\\ j \neq \ell}}^v(\ell-j)}.
\end{equation}
Combining (\ref{interlacingKtilde2}) to (\ref{Deltaf}), we find
\begin{equation}\label{interlacingKtilde3}
\tilde{K}_n(r,u;s,v)=\frac{(n-s)!}{(n-r-1)!}\sum_{j=1}^n\sum_{\ell=v-n+s}^{u-1}\mathbb{I}_{x_j\ge u}\frac{\prod_{k=u+r-n+1}^{u-1}(x_j-k)}
{\prod_{\substack{k=v-n+s\\ j \neq \ell}}^v(\ell-k)}
\prod_{\substack{i=1\\i\neq j}}^n\frac{\ell-x_i}{x_j-x_i}.
\end{equation}
It follows from (\ref{interlacingKtilde3}) and the residue theorem that
\begin{equation*}
\tilde{K}_n(r,u;s,v)=\frac{(n-s)!}{(n-r-1)!}\frac 1{(2\pi \mathrm{i})^2}\int_{\gamma}dw\int_{\Gamma_u}dz\frac{\prod_{k=u+r-n+1}^{u-1}(z-k)}
{\prod_{k=v+s-n}^{v}(w-k)}\prod_{i=1}^n\frac{w-x_i}{z-x_i}\frac 1{w-z}.
\end{equation*}
Also,
\begin{equation*}
p_{r,s}(u,v)=\mathbb{I}_{r<s}h_{v-u}(1^{s-r})=\mathbb{I}_{r<s}\mathbb{I}_{u\le v}\frac{(v-u+1)_{s-r-1}}{(s-r-1)!},
\end{equation*}
which completes the proof of the theorem by (\ref{interlacingkernel}).
\end{proof}

This theorem was first proved in \cite{Pe1} using a somewhat different approach. The proof given here is that in \cite{DuMeI}. A corresponding result for continuous
interlacing particle systems was studied in \cite{Me}, see also \cite{Def}.

\begin{remark} {\rm As shown in figure \ref{figTilHalfHex} the discrete interlacing particle process can be translated into a random tiling model. This model then
corresponds to a dimer model on a hexagonal graph, and it is possible to express the inverse Kasteleyn matrix for this dimer model using the kernel
$K_n^{\text{Int}}$, see \cite{Pe1}.}
\end{remark}




\section{Scaling limits} \label{sec:7}
Let $K_n(r_1,u_1;r_2,u_2)$ be a correlation kernel for some discrete model which is a determinantal point process like the ones that have been discussed in the previous
sections. Before taking a scaling limit it is often necessary to consider a conjugate kernel
\begin{equation}\label{conjukernel}
K^\ast_n(r_1,u_1;r_2,u_2)=\frac{g_n(r_1,u_1)}{g_n(r_2,u_2)}K_n(r_1,u_1;r_2,u_2),
\end{equation}
with some non-zero function $g_n(r,u)$, compare (\ref{conjugatekernel}). 

There are two types of scaling limits that are natural to consider in random tiling (and related) models. 

1) \emph{Continuous scaling limits}. We let
\begin{equation}\label{scaling}
r_i=[cn+\alpha n^{\gamma}\rho_i],\quad u_i=[dn+\beta n^\delta\xi_i],
\end{equation}
$i=1,2$, where $c,d,\alpha,\beta$ are constants, $\gamma,\delta$ are \emph{scaling exponents} and $\rho_i,\xi_i\in\mathbb{R}$ the new variables in the scaling limit, the
\emph{scaling variables}.
The constants $c,d$ determine the asymptotic point around which we take the limit, i.e. the point we zoom in around. The scaling exponents determine the scales in which we zoom in. Depending on the geometry of the asymptotic shape around the point where we zoom in, we can expect different scaling limits. The limiting kernel, if the limit exists, is then
given by
\begin{equation}\label{scalinglimit}
\mathcal{K}(\rho_1,\xi_1;\rho_2,\xi_2)=\lim_{n\to\infty}K^\ast_n(r_1,u_1;r_2,u_2),
\end{equation}
with $r_i,u_i$ given by (\ref{scaling}).

2) \emph{Discrete/continuous scaling limits}. In some geometric situations the natural scaling limit is to keep one variable discrete and just rescale the other variable. We then get
what we can call a discrete/continuous scaling limit. We let
\begin{equation}\label{scaling2}
u_i=[dn+\beta n^\delta\xi_i],
\end{equation}
and consider the limit
\begin{equation}\label{scalinglimit2}
\mathcal{K}(r_1,\xi_1;r_2,\xi_2)=\lim_{n\to\infty}K^\ast_n(r_1,u_1;r_2,u_2),
\end{equation}
with $u_i$ given by (\ref{scaling2}).

\begin{remark} {\rm Note that a scaling limit of a determinantal point process is not necessarily a determinantal point process. In \cite{CJY}, theorem 2.7, a scaling of a domino
process is considered where the limit is what is called a \emph{thickened determinantal point process} in the paper. It may be more appropriate to call it a \emph{compound determinantal
point process}.}
\end{remark}

We will now consider several examples of natural scaling limits that can be obtained in various geometric situations in tiling models. We expect these limits to be universal scaling
limits. They define natural limiting point processes.

\subsection{Continuous scaling limits} \label{sec:7.1}

\subsubsection{The Airy point process and the Airy process}\label{subsec:Airypoint}

The typical edge scaling limit at a boundary between a liquid and a solid phase is the \emph{extended Airy point process}. The limiting kernel is the \emph{extended Airy kernel},
 \begin{equation}\label{extAirykernel}
K_{\text{extAi}}(\rho_1,\xi_1;\rho_2,\xi_2)=-\phi_{\rho_1,\rho_2}(\xi_1,\xi_2)+\tilde{K}_{\text{extAi}}(\rho_1,\xi_1;\rho_2,\xi_2),
\end{equation}
where
 \begin{equation*}
\phi_{\rho_1,\rho_2}(\xi_1,\xi_2)=\frac{\mathbb{I}_{\rho_1<\rho_2}}{\sqrt{4\pi (\rho_2-\rho_1)}}\exp\bigg(-\frac{(\xi_1-\xi_2)^2}{4(\rho_2-\rho_1)}-\frac 12(\rho_2-\rho_1)
(\xi_1+\xi_2)+\frac 1{12}(\rho_2-\rho_1)^3\bigg),
\end{equation*}
and
\begin{equation}\label{KtildeextAiry}
\tilde{K}_{\text{extAi}}(\rho_1,\xi_1;\rho_2,\xi_2)=\int_0^\infty e^{-\lambda(\rho_1-\rho_2)}\Ai(\xi_1+\lambda)\Ai(\xi_2+\lambda)\,d\lambda.
\end{equation}
This kernel is also given by a double contour integral formula. Let $\mathcal{C}_1$ be the contour consisting of lines from $\infty e^{-\pi i/3}$ to $0$ and from
$0$ to $\infty e^{\pi i/3}$, and let $\mathcal{C}_2$ be the contour consisting of lines from $\infty e^{-2\pi i/3}$ to $0$ and from
$0$ to $\infty e^{2\pi i/3}$. Then,
\begin{equation*}
\tilde{K}_{\text{extAi}}(\rho_1,\xi_1;\rho_2,\xi_2)=\frac {e^{\frac 13(\rho_2^3-\rho_1^3)-\xi_2\rho_2+\xi_1\rho_1}}{(2\pi \mathrm{i})^2}\int_{\mathcal{C}_1}dz\int_{\mathcal{C}_2}\frac {dw}{z-w}e^{\frac 13(z^3-w^3)-\rho_1z^2+\rho_2 w^2-(\xi-\rho_1^2)z^2+(\xi_2-\rho_2^2)w}.
\end{equation*}
This can be seen by inserting the integral formula for the Airy function in (\ref{KtildeextAiry}). A special case of the extended Airy kernel is the \emph{Airy kernel},
\begin{equation}\label{Airykernel}
K_{\Ai}(\xi_1,\xi_2)=K_{\text{extAi}}(\rho_1,\xi_1;\rho_1,\xi_2)=\int_0^\infty \Ai(\xi_1+\lambda)\Ai(\xi_2+\lambda)\,d\lambda.
\end{equation}
Let $\tau_1<\dots<\tau_m$. The kernel $K_{\text{extAi}}$ is the correlation kernel for a determinantal point process on $\{\tau_1,\dots,\tau_m\}\times\mathbb{R}$.
The point process on $\{\tau_i\}\times\mathbb{R}$ has a last particle with the distribution
\begin{equation}\label{TWdistribution}
F_2(s)=\det(I-K_{\Ai})_{L^2(s,\infty)},
\end{equation}
the \emph{Tracy-Widom distribution}.

The extended Airy kernel appears, for example, in an appropriate scaling limit at typical points at the liquid-solid boundary in the Aztec diamond, i.e. at the boundary between the disordered and the ordered regions. From lemma 3.1 in \cite{JoAz} we have the following result.

\begin{thma}\label{thm:Azteckernelasymptotics}
Consider the correlation kernel for the Aztec diamond particle process as given by (\ref{OneAzteckernel}) or (\ref{OneAzteckernel2}). Take $c=1+1/\sqrt{2}$,
$d=1/\sqrt{2}$, $\alpha=2^{-1/6}$, $\beta=-2^{-5/6}$, $\gamma=2/3$ and $\delta=1/3$ in (\ref{scaling}). Then,
\begin{equation}\label{extAirylimit}
\lim_{n\to\infty}(\sqrt{2}-1)^{u_1-u_2+r_2-r_1}e^{\xi_1\rho_1-\xi_2\rho_2-\rho_1^3/3+\rho_2^3/3 }K^{\text{OneAz}}_n(r_1,u_1;r_2,u_2)=
K_{\text{extAi}}(\rho_1,\xi_1;\rho_2,\xi_2).
\end{equation}
\end{thma}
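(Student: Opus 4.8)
The plan is to evaluate the double contour integral in (\ref{OneAzteckernel}) by the steepest descent method around a coalescing (double) saddle point, and to treat the single integral in (\ref{OneAzteckernel}) separately by a one-dimensional Laplace/central-limit argument that produces the Gaussian propagator $\phi_{\rho_1,\rho_2}$. Specializing to the uniform case $a=1$, write the first coordinate as $\mathfrak r_i=2r_i+\epsilon_i$, so that under (\ref{scaling}) with the stated constants $\mathfrak r_i/n\to c=1+1/\sqrt2$, the half-index obeys $r_i\approx \tilde c\,n$ with $\tilde c=c/2=\tfrac12+\tfrac1{2\sqrt2}$, and $u_i/n\to d=1/\sqrt2$. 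I factor the $z$-part of the integrand as $e^{nF(z)+o(n)}$ and the $w$-part as $e^{-nF(w)+o(n)}$, where
\[ F(z)=d\log z-(1-\tilde c)\log(1-1/z)-\tilde c\log(1+z). \]
A direct differentiation gives
\[ F'(z)=\frac{(d-\tilde c)z^{2}+(2\tilde c-1)z+(\tilde c-d-1)}{z(z^{2}-1)}=\frac{(d-\tilde c)(z-z_c)^{2}}{z(z^{2}-1)}, \]
the second equality holding because the numerator's discriminant $(2\tilde c-1)^2-4(d-\tilde c)(\tilde c-d-1)=\tfrac12-\tfrac12=0$ vanishes for exactly these $c,d$. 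Hence $F$ has a \emph{double saddle} at $z_c=1+\sqrt2$, with $F'(z_c)=F''(z_c)=0$ and $F'''(z_c)\neq0$. Crucially $1/z_c=\sqrt2-1$ is the base of the discrete conjugation factor in (\ref{extAirylimit}), the first confirmation that the constants are tuned correctly.

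Next I would deform the circles $\gamma_{\rho_1}$ (for $z$) and $\gamma_{\rho_2}$ (for $w$) into steepest descent/ascent contours through $z_c$, so that locally $F(z)-F(z_c)\approx\tfrac16F'''(z_c)(z-z_c)^3$. The three descent rays for the $z$-variable and the complementary three ascent rays for the $w$-variable, forced by the opposite signs of the exponents and by the factor $1/(w-z)$, are precisely those of the Airy contours $\mathcal C_1$ and $\mathcal C_2$ appearing in the double contour integral formula for $\tilde K_{\text{extAi}}$ stated above. Setting $z-z_c\sim\kappa\,n^{-1/3}\zeta$ (and similarly for $w$), with $\kappa$ fixed by $F'''(z_c)$, turns the cubic into $\tfrac13\zeta^3$; the deviations $\alpha n^{2/3}\rho_i$ in $\mathfrak r_i$ and $\beta n^{1/3}\xi_i$ in $u_i$ enter through the linear response of the exponent in $r$ and $u$ and generate exactly the quadratic terms $\mp\rho_i(\cdot)^2$ and linear terms $\mp\xi_i(\cdot)$ of that representation; this is what forces $\gamma=2/3$, $\delta=1/3$ and the precise $\alpha,\beta$. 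The two Jacobians $n^{-1/3}$ combine with $1/(w-z)$ to give the $n$-independent normalization, while the discrete prefactor $(\sqrt2-1)^{u_1-u_2+r_2-r_1}=z_c^{-(u_1-u_2)}\cdot(\cdots)$ together with the cocycle $e^{\xi_1\rho_1-\xi_2\rho_2-\rho_1^3/3+\rho_2^3/3}=g(\rho_1,\xi_1)/g(\rho_2,\xi_2)$, $g(\rho,\xi)=e^{\xi\rho-\rho^3/3}$, absorb the pure-power saddle value $z_c^{u_1-u_2}$ and the residual $r$-dependent constants, and supply exactly the prefactor $e^{\frac13(\rho_2^3-\rho_1^3)-\xi_2\rho_2+\xi_1\rho_1}$ sitting in front of $\tilde K_{\text{extAi}}$.

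For the single integral in (\ref{OneAzteckernel}), present only when $\mathfrak r_1<\mathfrak r_2$ (equivalently $\rho_1<\rho_2$ in the limit, matching the indicator in $\phi_{\rho_1,\rho_2}$), the exponent $\tfrac1{1+z}+\tfrac1z-\tfrac1{z-1}$ of the one-dimensional phase $[(1+z)\,z/(z-1)]^{s-r}$ also vanishes at $z_c=1+\sqrt2$, so the relevant critical point is the same edge point. Since $\mathfrak r_2-\mathfrak r_1$ is of order $n^{2/3}$ while $u_1-u_2$ is of order $n^{1/3}$, expanding about this (simple, for the single integral) critical point and performing a Laplace/local-central-limit estimate turns it into a Gaussian integral that, after the same conjugation, yields exactly $-\phi_{\rho_1,\rho_2}(\xi_1,\xi_2)$. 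To complete the argument I would prove that on the deformed contours $\mathrm{Re}\,(F(z)-F(z_c))$ is bounded away from $0$ outside a shrinking neighbourhood of $z_c$, making those portions exponentially negligible, and that the rescaled integrands are dominated by integrable functions along $\mathcal C_1,\mathcal C_2$, so that dominated convergence justifies the passage to the limit.

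The main obstacle is the global steepest-descent geometry at a coalescing saddle together with the pole bookkeeping: one must produce explicit admissible deformations of the two distinct circles $\gamma_{\rho_1},\gamma_{\rho_2}$ that both pass through the single point $z_c$ with the correct opposite roles, are genuine descent/ascent curves for $\mathrm{Re}\,F$ all the way to infinity (not merely locally), and avoid or correctly account for the singularities at $z=\pm1$ and the pole at $w=z$ crossed during deformation — the latter interacting with the $-p_{r,s}$ term; at $a=1$ this is delicate since the naive contour inequality $a<\rho_1<\rho_2<1/a$ degenerates, so the circles must be moved through $z_c=1+\sqrt2$ with residue corrections tracked explicitly. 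Equally delicate, and really the crux, is verifying that all subleading corrections assemble into precisely the exponent of the target double contour formula for $\tilde K_{\text{extAi}}$ and that the discrete and continuous conjugation factors cancel every spurious prefactor exactly.
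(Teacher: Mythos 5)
Your proposal is correct and follows essentially the same route as the paper, which establishes this result (citing Lemma 3.1 of \cite{JoAz}) by exactly the saddle point argument you describe: a steepest descent analysis of the double contour integral at the coalescing saddle $z_c=1+\sqrt{2}$ (your vanishing-discriminant computation and the identification of $1/z_c=\sqrt{2}-1$ with the conjugation factor both check out), together with a separate Gaussian/Laplace treatment of the single integral, whose phase is indeed also critical at $z_c$. The items you flag as remaining — explicit global descent/ascent contours, the bookkeeping of the $w=z$ pole during deformation, and uniform tail bounds — are precisely the technical content carried out in that reference and conceal no further idea.
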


This is proved using a saddle point argument in the contour integral formula for $K^{\text{OneAz}}_n$. The exponents $\gamma=2/3$ and $\delta=1/3$ are called the
\emph{KPZ-scaling exponents}, since they also appear in random growth models and a basic model for this type of local random growth is the
Kardar-Parisi-Zhang stochastic partial differential equation, see remark \ref{rem:Schur} for references.

We can use theorem \ref{thm:Azteckernelasymptotics} to get a limit of the random boundary curve itself, see figure \ref{fig:Aztecboundaryprocess}.

\begin{figure}
\begin{center}
\includegraphics[height=2in]{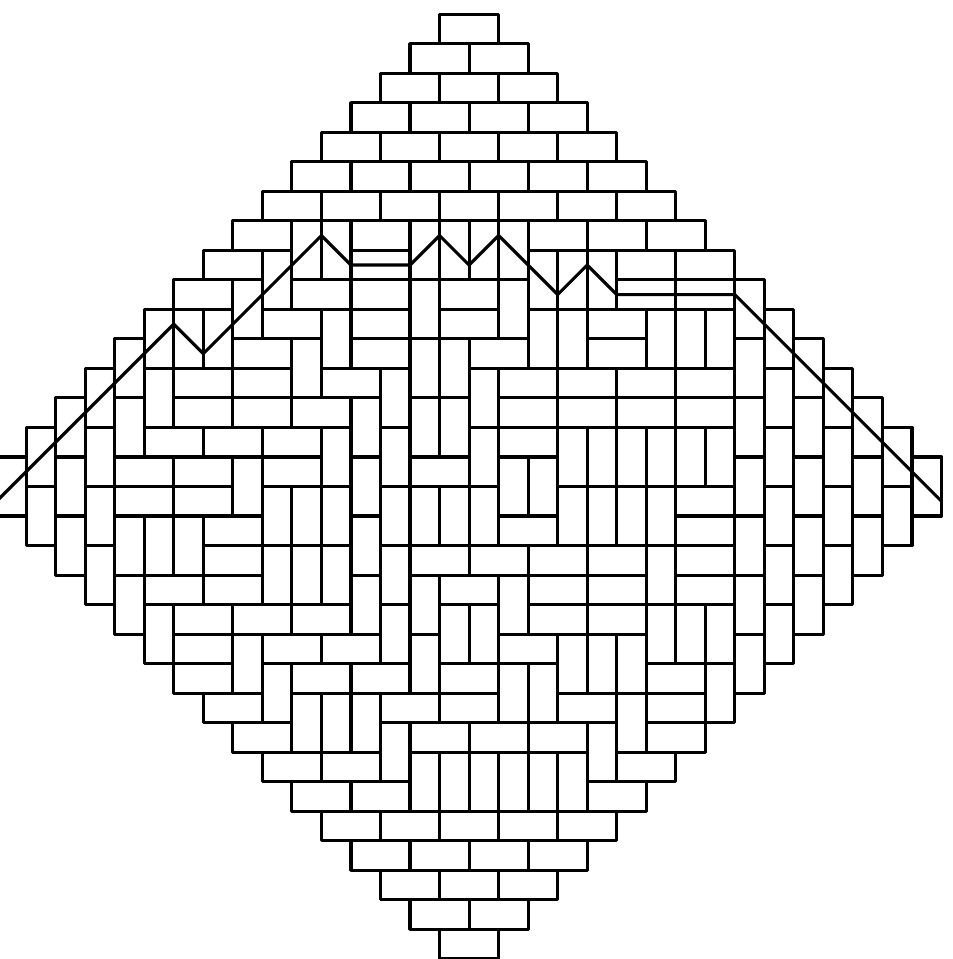}
\caption{Boundary process in the Aztec diamond}
\label{fig:Aztecboundaryprocess}
\end{center}
\end{figure}

There is a stationary stochastic process $\tau\mapsto\mathcal{A}(\tau)$, the \emph{Airy process}, such that for any $m\ge 1$ and all $s_j\in\mathbb{R}$
\begin{align*}
&\mathbb{P}\big[\mathcal{A}(\tau_1)\le s_1,\dots,\mathcal{A}(\tau_m)\le s_m\big]\\
&=\mathbb{P}\big[\text{the particles on $\{\tau_j\}\times\mathbb{R}$
are below $s_j$, $1\le j\le m$}\big]\\
&=\det(I-L)_{L^2(\{\tau_1,\dots,\tau_m\}\times\mathbb{R})},
\end{align*}
where
\begin{equation*}
L(\tau_i,\xi_i;\tau_j,\xi_j)=\mathbb{I}_{(s_i,\infty)}(\xi_i)K_{\text{extAi}}(\tau_i,\xi_i;\tau_j,\xi_j)\mathbb{I}_{(s_i,\infty)}(\xi_i).
\end{equation*}
From (\ref{TWdistribution}) we see that the one-point distribution is the Tracy-Widom distribution. The Airy process has continuous sample paths and locally looks like Brownian 
motion, see \cite{Hagg}, \cite{CoHa}, but it is not a Markov process. For uniform random tilings of the Aztec diamond we have the following theorem,\cite{JoAz}.

\begin{thma}\label{thm:Azasymptotics}
Consider an $n\times n$-Aztec diamond. Let $X_n(t)$ be the boundary process as in figure \ref{fig:Aztecboundaryprocess}, where we use the coordinate system in which the
Axtec diamond shape is given by the unit integer squares in $\{(x,y)\,;\,|x|+|y|\le n+1\}$. Then,
\begin{equation}\label{Airyprocesslimit}
\frac{X_n(2^{-1/6}n^{2/3}\tau)-n/\sqrt{2}}{2^{-5/6}n^{1/3}}\to\mathcal{A}(\tau)-\tau^2,
\end{equation}
as $n\to\infty$, in the sense of convergence of finite dimensional distributions.
\end{thma}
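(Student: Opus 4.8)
The plan is to express the finite-dimensional distributions of the boundary process as gap (hole) probabilities of the underlying determinantal point process, and then pass to the limit inside a Fredholm determinant using the kernel asymptotics already recorded in Theorem \ref{thm:Azteckernelasymptotics}. First I would identify the boundary curve $X_n(t)$ with the trajectory of the extremal (topmost) particle in the particle description of section \ref{sec4.2}: the uppermost non-intersecting path traces exactly the interface between the frozen and liquid regions, so the height of the boundary above a given horizontal position equals the position of the extremal particle on the corresponding vertical line $r$. Under the coordinate change to $\{|x|+|y|\le n+1\}$, a time $t=2^{-1/6}n^{2/3}\tau$ corresponds, via (\ref{scaling}) with the constants $c=1+1/\sqrt2$, $\alpha=2^{-1/6}$, $\gamma=2/3$ of Theorem \ref{thm:Azteckernelasymptotics}, to a line index $r_j$, and a threshold on $X_n$ corresponds to a level $u_j$ with scaling variable $\xi_j$ fixed by $\beta=-2^{-5/6}$, $\delta=1/3$.

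With this dictionary the event $\{(X_n(t_j)-n/\sqrt2)/(2^{-5/6}n^{1/3})\le s_j,\ 1\le j\le m\}$ becomes the event that on each line $r_j$ there are no particles beyond the level $u_j$, i.e.\ a gap event on a set $B_n=\bigcup_{j=1}^m\{r_j\}\times\{u_j+1,u_j+2,\dots\}$. By the determinantal structure (Theorem \ref{productmeasurethm}) and the hole-probability formula (\ref{Laplacetrans}), its probability equals $\det(I-K_n^{\text{OneAz}})_{L^2(B_n)}$. Since conjugating the kernel by the nonzero factor $(\sqrt2-1)^{u_1-u_2+r_2-r_1}e^{\xi_1\rho_1-\xi_2\rho_2-\rho_1^3/3+\rho_2^3/3}$ appearing in (\ref{extAirylimit}) leaves every such Fredholm determinant unchanged (compare (\ref{conjugatekernel})), I may work with the conjugated kernel throughout.

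The core step is the convergence of the Fredholm determinants. Under the scaling (\ref{scaling}) the discrete domain $B_n$ rescales to a union of half-lines which, after matching the orientation convention built into $K_{\text{extAi}}$ (this is where the sign $\beta<0$ must be tracked), become $\bigcup_{j=1}^m\{\tau_j\}\times(s_j+\tau_j^2,\infty)$; the parabolic shift $\tau_j^2$ is produced by the quadratic term in the saddle-point expansion of the arctic (here circular) curve about the tangency point where we zoom in, which is precisely the mechanism coupling the $\gamma=2/3$ time scaling to the $\delta=1/3$ spatial scaling. Theorem \ref{thm:Azteckernelasymptotics} gives pointwise convergence of the conjugated kernel to $K_{\text{extAi}}$, and to promote this to convergence of the determinants I would establish uniform bounds on $K_n^{\text{OneAz}}$ that decay super-exponentially in $\xi$, dominating the Airy-type tail of the intervals $(s_j+\tau_j^2,\infty)$; such bounds come from the same saddle-point analysis of the double contour integral in (\ref{OneAzteckernel}) that underlies Theorem \ref{thm:Azteckernelasymptotics}, by deforming contours through the critical point. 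Dominated convergence term by term in the Fredholm expansion then yields
\[
\det(I-K_n^{\text{OneAz}})_{L^2(B_n)}\longrightarrow \det\bigl(I-K_{\text{extAi}}\bigr)_{L^2\bigl(\cup_j\{\tau_j\}\times(s_j+\tau_j^2,\infty)\bigr)},
\]
and by the defining property of the Airy process stated just above the theorem the right-hand side equals $\mathbb{P}[\mathcal{A}(\tau_j)-\tau_j^2\le s_j,\ 1\le j\le m]$, which is the claimed finite-dimensional distribution.

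The main obstacle I anticipate is the uniform tail control in this core step: upgrading the pointwise kernel asymptotics of Theorem \ref{thm:Azteckernelasymptotics} to dominated-convergence (or trace-class) control over the unbounded domain, so that the limit may legitimately be taken inside the Fredholm determinant. This requires quantitative, $\xi$-uniform Airy-type decay estimates on $K_n^{\text{OneAz}}$ rather than convergence at fixed points, and is where the bulk of the saddle-point work sits. A secondary, more bookkeeping-level point is to verify carefully that the extremal particle indeed traces the macroscopic interface, and that the orientation and curvature of the arctic curve deliver the shift with the correct sign and constant, namely $-\tau^2$.
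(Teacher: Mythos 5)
Your proposal follows essentially the same route as the paper's (cited) argument from \cite{JoAz}: identify $X_n$ with the extremal particle of the determinantal process, write the finite-dimensional distributions as Fredholm determinants over half-lines, and pass to the limit using the conjugated-kernel convergence of theorem \ref{thm:Azteckernelasymptotics} together with uniform Airy-type decay estimates from the saddle-point analysis, the parabola $-\tau^2$ coming from the curvature of the arctic circle at the tangency point. You also correctly locate the main technical burden in upgrading pointwise kernel convergence to convergence of the Fredholm determinants.
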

Thus, in an appropriate scaling limit, the boundary between the liquid and the solid regions in the Aztec diamond is described by the Airy process. This is what we expect
to be the universal edge behaviour at typical points of the liquid-solid boundary in random tiling models.

The extended Airy kernel point process and the Airy process can also be obtained as limits of Dyson's Brownian motion, see remark \ref{rem:Dyson}, so here we see a connection to random matrix theory.
The Tracy-Widom distribution is then the asymptotic largest eigenvalue distribution for large Hermitian random matrices.

\subsubsection{The Pearcey process}\label{subsec:Pearcey}

The boundary to a liquid region can have cusp points and if we zoom in at such a point we get a new limit. The Pearcey process is expected to appear when we have a cusp
whose symmetry line is not parallel to a side of the region and where we have only one type of tiles inside the cusp. There is also a special type of cusp points, see
section \ref{subsec:CuspAiry}. Let $\mathcal{C}$ be the contour consisting of the straight lines from $\infty e^{\pi i/4}$ to $0$, $0$ to $\infty e^{3\pi i/4}$,
$\infty e^{-3\pi i/4}$ to $0$ and $0$ to $\infty e^{-\pi i/4}$. Then, the \emph{Pearcey kernel} is
\begin{align}\label{Pearceykernel}
K_{\text{Pearcey}}(\rho_1,\xi_1;\rho_2,\xi_2)&=-\frac{\mathbb{I}_{\rho_1<\rho_2}}{\sqrt{2(\rho_2-\rho_1)}}e^{-\frac{(\xi_1-\xi_2)^2}{2(\rho_2-\rho_1)}}\notag\\
&+\frac 1{(2\pi \mathrm{i})^2}\int_\mathcal{C} dz\int_{-i\infty}^{i\infty}dw\, e^{\frac 14(z^4-w^4)+\frac12(\rho_2 w^2-\rho_1 z^2)+\xi_1z-\xi_2w}\frac 1{w-z}.
\end{align}
The Pearcey particle process is a determinantal point process with kernel (\ref{Pearceykernel}) on a set of lines $\{\tau_1,\dots,\tau_m\}\times\mathbb{R}$. This type of limit can 
be obtained for example in skew plane partitions that we discussed above in section \ref{sec4.4}, see figure 2 in \cite{OkRe3}. We will not present an actual limit theorem, see \cite{OkRe3}.
In this case the scaling exponents are $\gamma=1/2$ and $\delta=1/4$.

\subsubsection{The Tacnode process}\label{subsec:Tacnode}
A special type of geometric situation occurs when the boundary of the liquid region has a tacnode, i.e. we have a quadratic tangency at a point of a line from both sides,
see figure \ref{fig:tpn200}. If we have the same types of tiles in the openings as in figure  \ref{fig:tpn200}, we expect a limiting process called the \emph{tacnode process}.
The correlation kernel defining the tacnode process is defined as follows. Let $\lambda>0$ and $\sigma\in\mathbb{R}$ be two parameters. The parameter $\lambda$ is
the ratio of the curvatures of the two asymptotic boundary curves that are tangent at the point we are looking at, and $\sigma$ is a kind of "pressure" between the two sides
of the tangent line. The value $\lambda=1$ corresponds top the symmetric case. Set
\begin{align*}
\Ai^{(s)}(x)&=e^{\frac 23s^3+sx}\Ai(s^2+x),\\
K_{\Ai}^{(\alpha,\beta)}(x,y)&=\int_0^\infty\Ai^{(\alpha)}(x+t)\Ai^{(\beta)}(y+t)\,dt,\\
\mathcal{B}_{\tau,\xi}^{\lambda}(x)&=\int_0^\infty\Ai^{(\tau)}(\xi+(1+1/\sqrt{\lambda})^{1/3}t)\Ai(x+t)\,dt,\\
b_{\tau,\xi}^{\lambda}(x)&=\lambda^{1/6}\Ai^{(\lambda^{1/3}\tau)}(-\lambda^{1/6}+(1+\sqrt{\lambda})^{1/3}x).
\end{align*}
Write $\tilde{\sigma}=\lambda^{1/6}(1+\sqrt{\lambda})^{2/3}\sigma$ and define
\begin{align*}
L_{\text{tac}}^{\lambda,\sigma}(\rho_1,\xi_1;\rho_2,\xi_2)&=K_{\Ai}^{(-\tau_1,\tau_2)}(\sigma+\xi_1,\sigma+\xi_2)\\
&+(1+1/\sqrt{\lambda})^{1/3}\langle \mathcal{B}_{\rho_2,\sigma+\xi_2}^{\lambda}-b_{\rho_2,\sigma+\xi_2}^{\lambda},
(I-\chi_{\tilde{\sigma}}K_\Ai\chi_{\tilde{\sigma}})^{-1}\mathcal{B}_{-\rho_1,\sigma+\xi_1}^{\lambda}\rangle_{L^2(\tilde{\sigma},\infty)},
\end{align*}
where $\chi_{\tilde{\sigma}}(x)=\mathbb{I}_{x>\tilde{\sigma}}$ is an indicator function. The \emph{tacnode kernel} is then given by
\begin{align}\label{tacnodekernel}
K_{\text{tacnode}}^{\lambda,\sigma}(\rho_1,\xi_1;\rho_2,\xi_2)&=-\frac{\mathbb{I}_{\rho_1<\rho_2}}{\sqrt{4(\rho_2-\rho_1)}}e^{-\frac{(\xi_1-\xi_2)^2}{4(\rho_2-\rho_1)}}\\
&+L_{\text{tac}}^{\lambda,\sigma}(\rho_1,\xi_1;\rho_2,\xi_2)+
\lambda^{1/6}L_{\text{tac}}^{\lambda^{-1},\lambda^{2/3}\sigma}(\lambda^{1/3}\rho_1,-\lambda^{1/6}\xi_1;\lambda^{1/3}\rho_2,-\lambda^{1/6}\xi_2).
\end{align}
The tacnode process is again a determinantal point process on a set of lines $\{\tau_1,\dots,\tau_m\}\times\mathbb{R}$. Using the formula in theorem \ref{thm:DA} this limit can be proved in the double Aztec diamond model, \cite{AJvM}. We will not go into the details. The scaling exponents are the KPZ-scaling exponents in this case.

\begin{remark} {\rm The tacnode kernel has appeared in several different forms and it is not trivial that they agree with each other. The tacnode process was first analysed in \cite{DKZ} for non-colliding Brownian motions using a $4\times 4$ matrix Riemann-Hilbert approach, and
in \cite{AFvM} starting from a non-intersecting path ensemble. The form of the limit is completely different in the two approaches. In \cite{JoNIBM} the tacnode situation
for non-colliding Brownian motions was analyzed using an approach not based on the Riemann-Hilbert formulation, and the kernel was given essentially in the form given above.
The form given above comes from \cite{FeVe} who generalized \cite{JoNIBM} to the non-symmetric situation. The double Aztec diamond was analyzed in \cite{AJvM} and it follows 
from that paper that the form of the tacnode kernel given in \cite{AFvM} agrees with the one above. The agreement with the Riemann-Hilbert form of the kernel was shown in \cite{Del}.}
\end{remark}

\begin{remark} {\rm As discussed in section \ref{sec:4} many of the random tiling models can be thought of as ensembles of non-intersecting paths. If we keep the picture of random paths we should in the limit get an infinite ensemble of non-colliding paths called a \emph{line ensemble}. For the so called \emph{Airy line ensemble}, see \cite{CoHa}.
This is the line ensemble corresponding to the extended Airy kernel point process and it has a top path which is an Airy process.}
\end{remark}

\subsection{Discrete/continuous scaling limits} \label{sec:7.2}

In this section we consider various possible scaling limits in random tiling models of the form (\ref{scaling2}). For rhombus tilings the particles are the rhombi, whereas for domino
tilings we consider the particles as defined in section \ref{sec4.2.1}. In all the cases of interest for the discrete/continuous scaling limits there is an interlacing particle
structure which also survives in the limit. We will not discuss the actual scaling limits or the necessary asymptotic analysis which in some cases is rather involved.

\begin{figure}
\begin{center}
\includegraphics[height=2in]{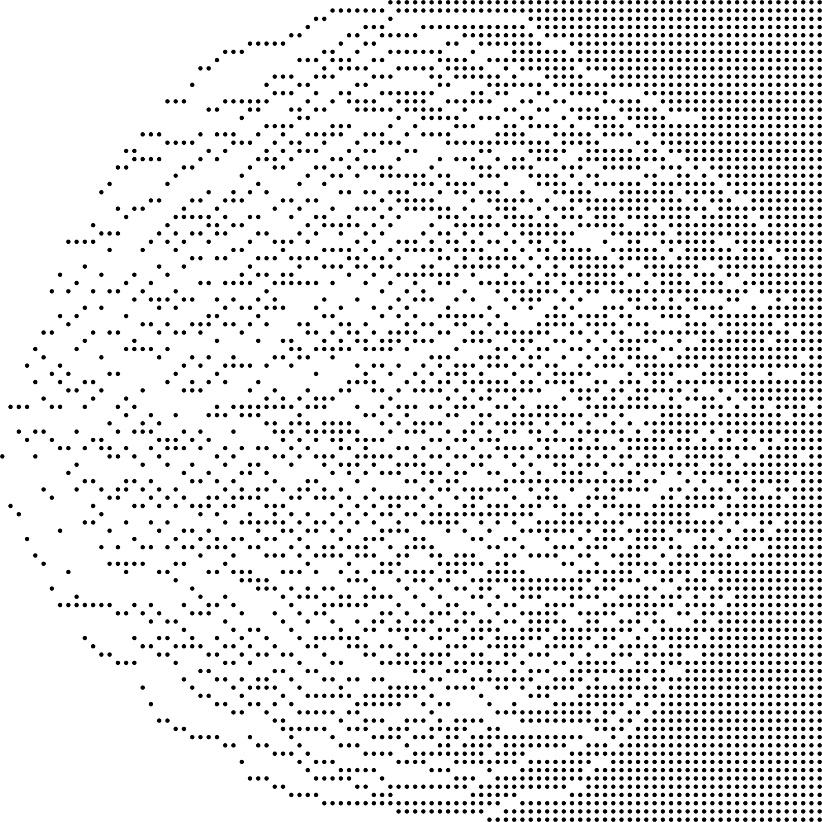}
\caption{Interlacing particles at a tangency point in the Aztec diamond. The GUE-minor process is the limit process if we consider the interlacing particles on succesive vertical lines in the left side of the picture. We get the same result for the dual particle system in the right side of the picture.}
\label{fig:GUEminor}
\end{center}
\end{figure}

\subsubsection{The GUE-minor or -corner process}\label{subsec:GUEcorner}

When the liquid region is tangent to the boundary of the tiling region and we have different type of frozen tiles on the two sides, we expect to get a scaling limit called the \emph{GUE-minor process} or \emph{GUE-corner process} in a suitable discrete/continuous scaling limit. Consider the particle picture of the Aztec diamond and remove one
type of particles, e.g. keep just the blue particles, see figure \ref{fig:GUEminor}. In the vicinity of the tangent point to the boundary we then get an interlacing particle process which is determinantal. In a scaling limit like
(\ref{scalinglimit2}) with $\delta=1/2$ in (\ref{scaling2}), we get an interlacing particle process on $\mathbb{Z}^+\times\mathbb{R}$. We keep the discrete
structure in the direction orthogonal to the boundary and the rescaling is done in the direction parallel to the boundary. Let $r_1,r_2\in\mathbb{Z}^+$ and
$\xi_1,\xi_2\in\mathbb{R}$.
The correlation kernel for the limiting determinantal point process is the \emph{GUE-minor kernel},
\begin{align}\label{GUEminorkernel}
K_{\text{GUEminor}}(r_1,\xi_1;r_2,\xi_2)&=-\mathbb{I}_{r_1>r_2}\mathbb{I}_{\xi_1\ge\xi_2}2^{r_1-r_2}\frac{(\xi_1-\xi_2)^{r_1-r_2-1}}{(r_1-r_2-1)!}\notag\\
&+\frac{2}{(2\pi \mathrm{i})^2}\int_{\gamma_r}dz\int_{L}\frac {dw}{w-z}\frac{w^{r_2}}{z^{r_1}}e^{w^2-z^2+2\xi_1z-2\xi_2w}.
\end{align}
Here $L$ is a contour $t\mapsto s+it$, $t\in \mathbb{R}$, where $s>r$.
It is also possible to express the kernel in terms of Hermite polynomials, see \cite{JoNo}.

The GUE-minor process also appears in random matrix theory. Consider an $N\times N$ GUE matrix, i.e. an Hermitian matrix $H=(H_{ij})$ taken randomly according
to the probability measure
\begin{equation}\label{GUE}
\frac 1{Z_N}e^{-\Tr H^2}dH,
\end{equation}
where $dH=\prod_{1\le i<j\le N} d(\Re H_{ij})d(\Im H_{ij})\prod_{i=1}^NdH_{ii}$. If we consider the eigenvalues of the principal submatrices $(H_{ij})_{1\le o,j\le n}$, $1\le n\le N$, they will interlace and form a determinantal point process on $\{1,\dots, N\}\times\mathbb{R}$ with correlation kernel given by (\ref{GUEminorkernel}). This is another case
where we see a close relation to random matrix theory. Note that this means in particular that the single particle on the first line has Gaussian fluctuations.

\subsubsection{The Cusp-Airy process}\label{subsec:CuspAiry}

The boundary curve of the liquid region can have a cusp with a symmetry line parallel to the sides of the region and such that inside the cusp there are two types
of tiles, one type on each side of the symmetry line, see figure 1 in \cite{KeOk2}.

In this case when we zoom in at the cusp and take a scaling limit like (\ref{scalinglimit2}) with
$\delta=1/3$ we get a limiting correlation kernel called the \emph{Cusp-Airy kernel} defined as follows. Let $\mathcal{L}_L$ be a contour consisting of lines from
$\infty e^{-2\pi i/3}$ to $y$ and $y$ to $\infty e^{2\pi i/3}$, and $\mathcal{L}_R$ be a contour consisting of lines from
$\infty e^{-\pi i/3}$ to $x$ and $x$ to $\infty e^{\pi i/3}$. Also, let $0<\rho_1<\rho_2$, $\rho_2<x$ and $\rho_2<-y$.
For
$r_1,r_2\in\mathbb{Z}$ and $\xi_1,\xi_2\in\mathbb{R}$,
\begin{align}\label{CuspAirykernel}
K_{\text{CuspAiry}}(r_1,\xi_1;r_2,\xi_2)&=-\mathbb{I}_{r_1<r_2}\mathbb{I}_{\xi_1\le\xi_2}\frac{(\xi_2-\xi_1)^{r_2-r_1-1}}{(r_2-r_1-1)!}\notag\\
&+\frac{1}{(2\pi \mathrm{i})^2}\int_{\mathcal{L}_L+\gamma_{\rho_2}}dz\int_{\mathcal{L}_R+\gamma_{\rho_1}}\frac {dw}{w-z}\frac{w^{r_1}}{z^{r_2}}e^{\frac 13(w^3-z^3)-\xi_1w+\xi_2z}.
\end{align}
This kernel can be established as a limit of a model of interlacing particles, or rhombus tilings, as discussed in section \ref{sec:5} as was done in \cite{DJMe}.
This type of situation was first studied in \cite{OkRe2} under the name cuspidal turning point in skew plane partitions but the kernel given there has the wrong contours and no proof was given.

\subsubsection{The Discrete Tacnode process}\label{subsec:DiscreteTacnode}

\begin{figure}
\begin{center}
\includegraphics[height=2in]{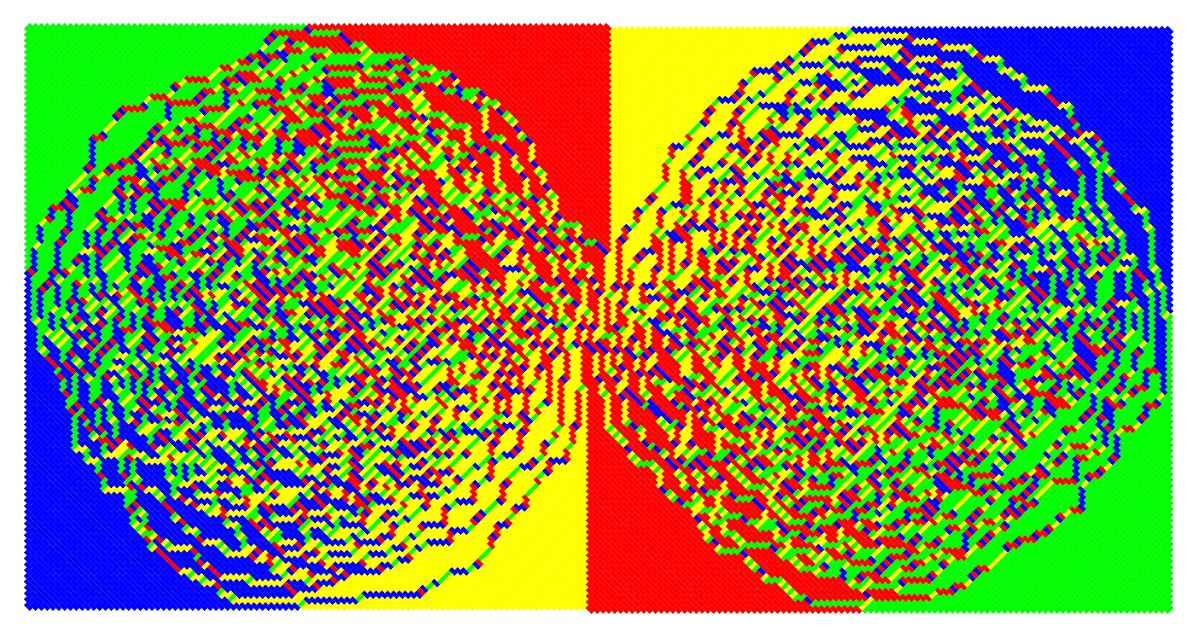}
\caption{A random tiling of the Double Aztec diamond in a situation giving a Discrete Tacnode.}
\label{fig:DiscTacTilings}
\end{center}
\end{figure}

\begin{figure}
\begin{center}
\includegraphics[height=2in]{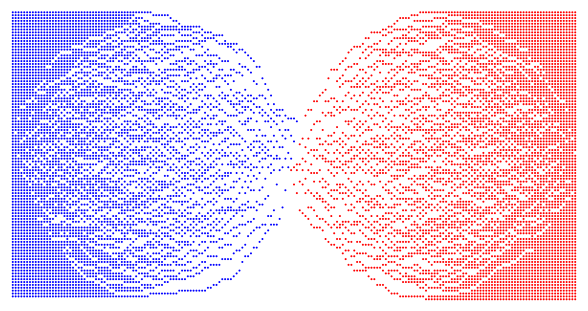}
\caption{The particle picture corresponding to figure \ref{fig:DiscTacTilings}.}
\label{fig:DiscTacParticles}
\end{center}
\end{figure}

In some situations the boundary curve of the liquid region can have a tacnode with a symmetry line parallel to a side of the tiling region and with different types of
tiles on the two sides of the symmetry line, see figure \ref{fig:DiscTacTilings} for an example in the Double Aztec diamond. Consider the Double Aztec diamond and look at in terms of the particle picture.
If we keep just one type of particles for the dominos, see figure \ref{fig:DiscTacParticles}, we get a two-sided interlacing particle system with a certain relation between the size of the overlap and the number of particles in the overlap. In the Double Aztec diamond we have an overlap of size $r$ and $r$ particles in the overlap region. This means that the number of particles
on each line in the two-sided interlacing pattern varies as $\dots,r+2,r+1,r,\dots, r,r+1,r+2,\dots$ with the number of $r$'s equal to $r+1$, see figure \ref{fig:twosidedinterlacing}.
We can think of this as two overlapping GUE-minor type processes.

In this case a scaling limit of the
form (\ref{scalinglimit2}) with $\delta=1/2$ can be computed using theorem \ref{thm:DA} and changing to the right type of particles by using the inverse Kasteleyn matrix for
the Double Aztec diamond, see \cite{ACJvM}.  This gives a limit that we can call the \emph{Discrete Tacnode kernel}. We will not give present the kernel here but refer to
\cite{ACJvM}. The formula involves a resolvent just as the Tacnode kernel above. In the paper the kernel is called the Tacnode GUE-minor kernel since it is an extension
of the GUE-minor kernel. For a different perspective on the kernel see \cite{AvM}. This type of two-sided interlacing pattern can also be obtained in rhombus tilings,
see \cite{AJvM2}. In work in progress, \cite{AJvM3}, this leads to a generalization of the discrete Tacnode kernel obtained in \cite{ACJvM} to an arbitrary relation between the
size of the overlap and the number of particles on each line in the overlap region so this gives a more general Discrete Tacnode kernel.

\begin{figure}
\begin{center}
\includegraphics[height=2in]{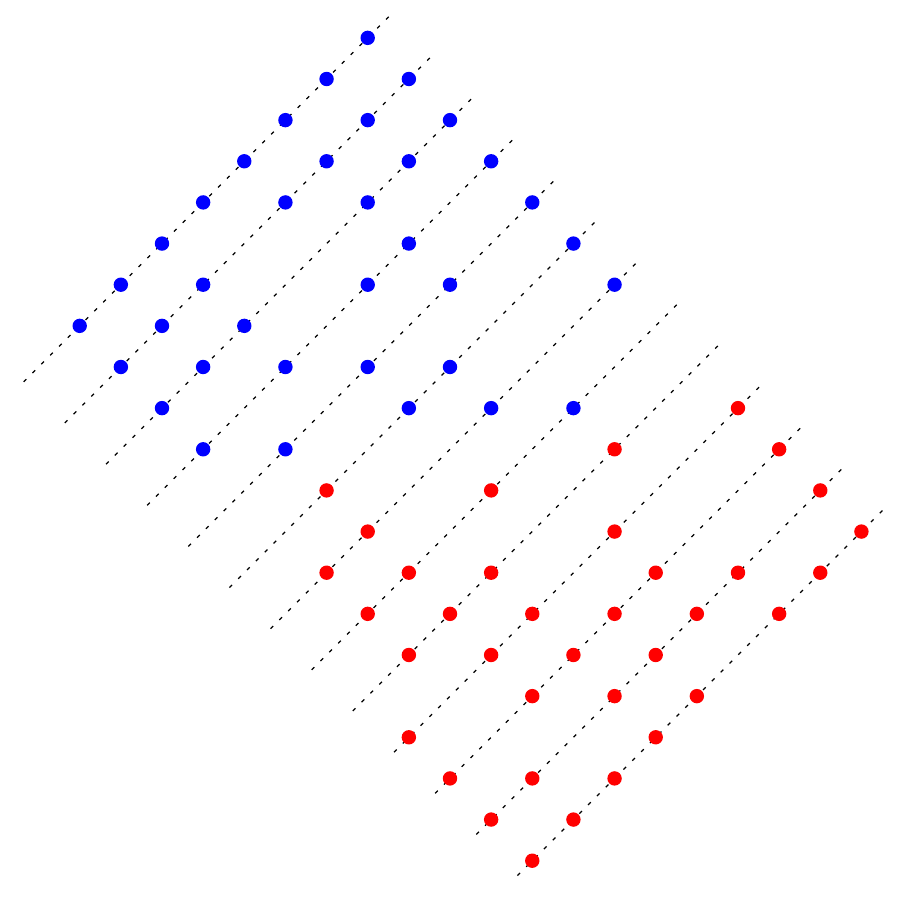}
\caption{The two-sided interlacing pattern.}
\label{fig:twosidedinterlacing}
\end{center}
\end{figure}

\subsection{Hierarchy of kernels} \label{sec:7.3}
The kernels discussed in sections \ref{sec:7.1} and \ref{sec:7.2} are not unrelated. Some of then can be obtained by taking appropriate scaling limits of the other kernels.
From the models it is reasonable to expect that the Pearcey kernel is a limit of the Tacnode kernel and that the Airy kernel can be obtained as a limit of the Pearcey kernel,
and hence also of the tacnode kernel. Also, for example, we should be able to get the Airy kernel as a limit of the GUE-minor kernel. In fact, we expect a hierarchy
of kernels as shown in figure \ref{fig:Hierarchy}.

\begin{figure}
\begin{center}
\includegraphics[height=2in]{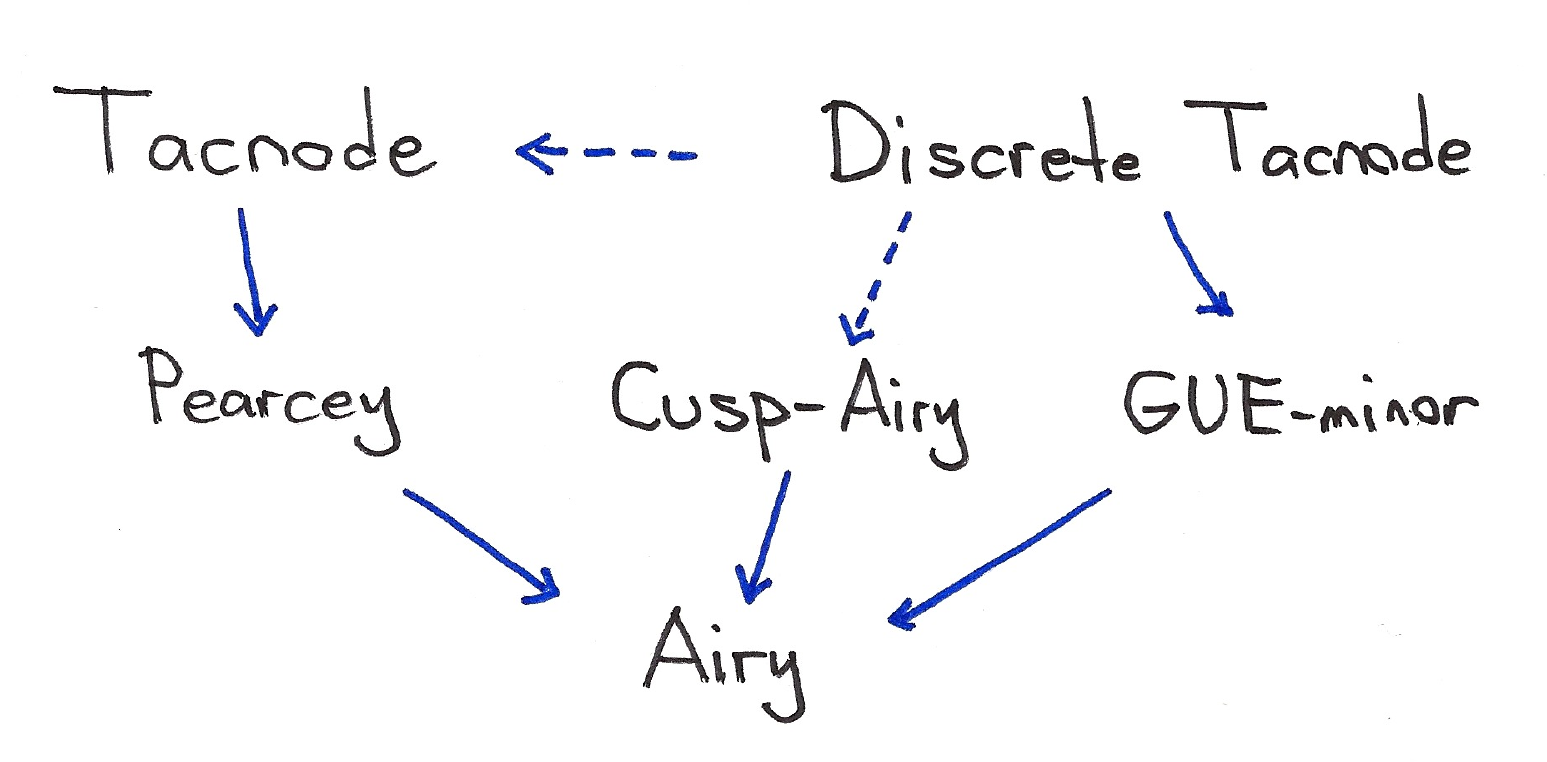}
\caption{Hierarchy of limiting correlation kernels.}
\label{fig:Hierarchy}
\end{center}
\end{figure}

The full arrows represent cases that have been proved, \cite{ACvM}, \cite{BC} or that would be relatively straightforward to prove, whereas the dotted arrows are not known. At the top of the hierarchy one might expect a sufficiently general form of the Discrete Tacnode kernel.
 It might be in some sense the most general kernel that fits into this hierarchy and occurs naturally in appropriate classes of random tiling models. There can also, depending on which class of tilings one considers be other types of limits also, e.g. the one studied in \cite{FeVe2}.
 
 This relates to a universality problem, to classify all possible scaling limits in suitable classes of models, e.g. rhombus tilings of an appropriate family of simply connected polygonal regions depending on the geometric
situation. There are certainly also cases where we expect non-universal limits like for the singular boundary cases discussed in \cite{DuMeII}. Also, outlier particles can lead to modifications of the kernels, see \cite{AFvMWan}. It is not completely clear which kernels should be considered as the basic universal scaling limits in some natural sense.

\section{The two-periodic Aztec diamond} \label{sec:8}

\subsection{Definition of the model and phases} \label{sec:8.1}

Consider the Aztec diamond shape, or more specifically the Aztec diamond graph, see figure \ref{fig:TPweights} . We introduce dimer weights in this graph in the following way. For a two-colouring of
the faces, the edge weights around the faces with a particular colour alternate between $a$ and $1$, where $0<a<1$ is a parameter. We call
this a \emph{two-periodic weighting}, and the corresponding dimer or random tiling model is referred to as the \emph{two-periodic Aztec diamond}. A simulation can be seen in figure
\ref{fig:twoperiodicsim}. For the two-periodic Aztec diamond we also have a height representation as defined in section \ref{sec:3.3}. Simulations of the random height function can be seen in figures \ref{fig:TPAztecheight} and \ref{fig:TPAztecheight2}.

\begin{figure}
\begin{center}
\includegraphics[height=2in]{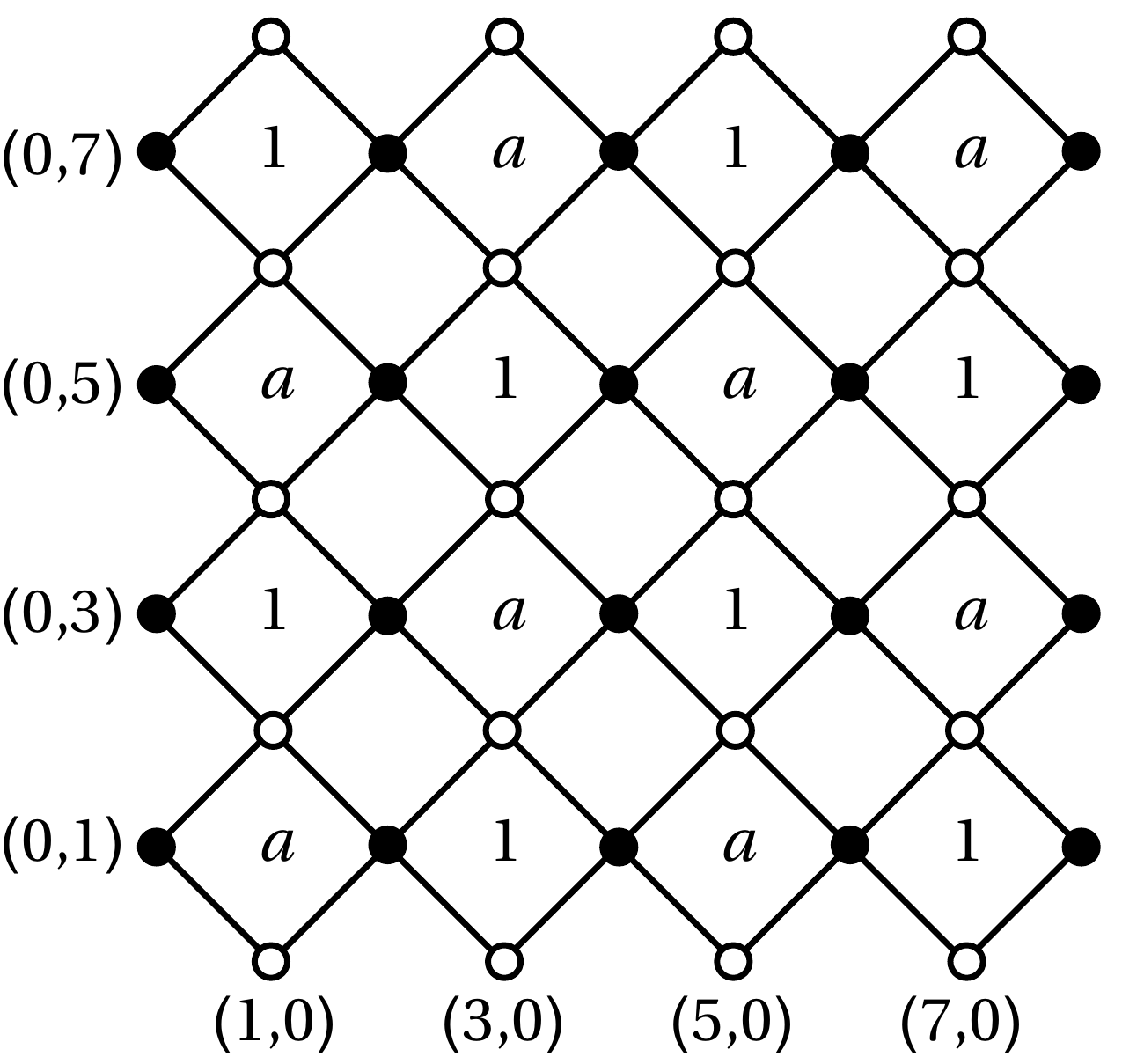}
\caption{The graph, weights and coordinate system in the two-periodic Aztec diamond. All edges surrounding a face has the weight indicated in the face.}
\label{fig:TPweights}
\end{center}
\end{figure}

\begin{figure}
\begin{center}
\includegraphics[height=2.5in]{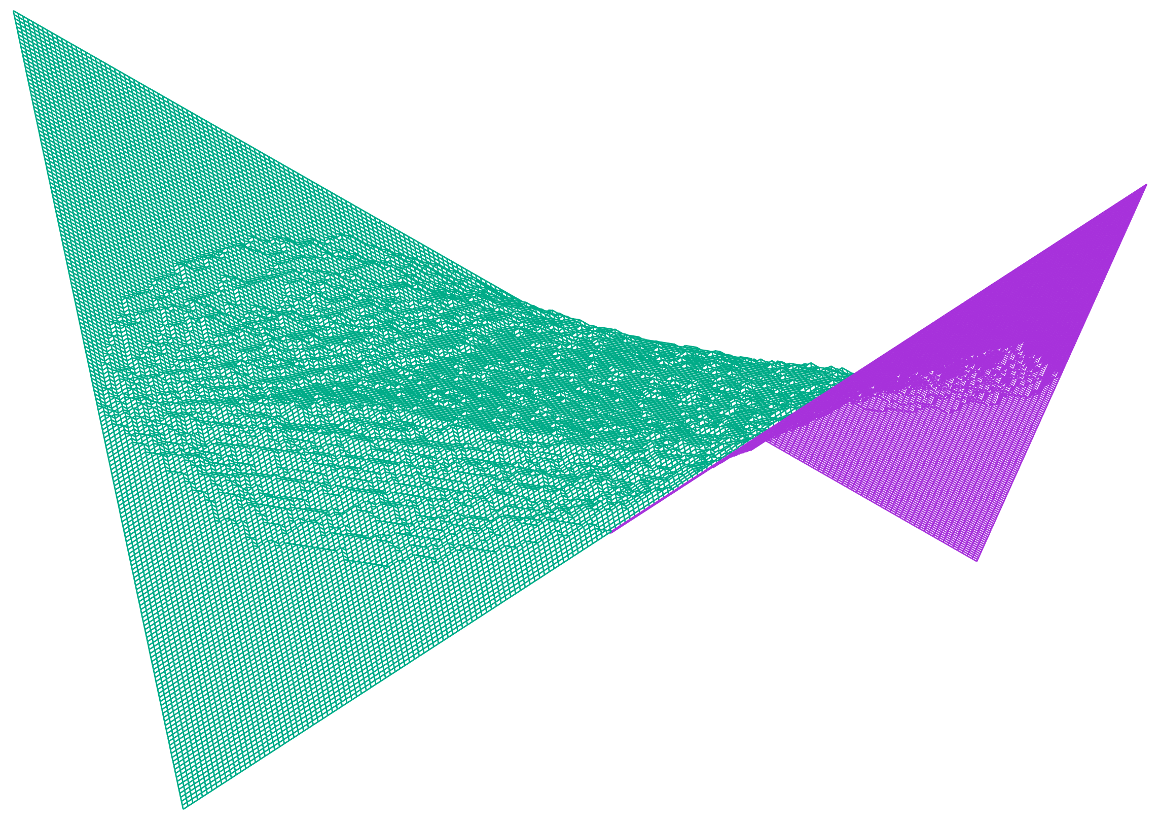}
\caption{A simulation of the random height function in the two-periodic Aztec diamond. (Picture by V. Beffara)}
\label{fig:TPAztecheight}
\end{center}
\end{figure}

\begin{figure}
\begin{center}
\includegraphics[height=2in]{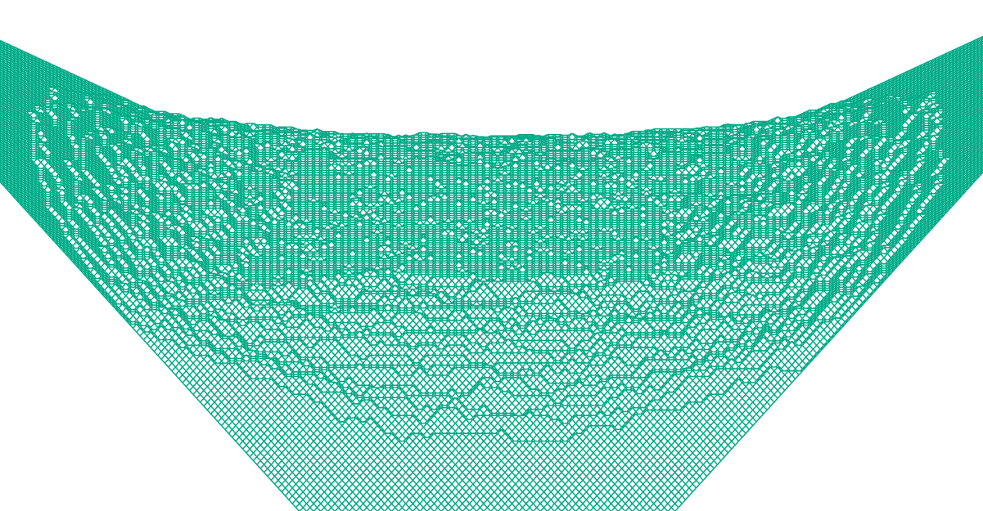}
\caption{A simulation of the random height function in the two-periodic Aztec diamond. (Picture by V. Beffara)}
\label{fig:TPAztecheight2}
\end{center}
\end{figure}

To be more precise, we choose the Kasteleyn matrix as follows. The black and white vertices in the Aztec diamond graph, figure \ref{fig:TPweights}, are
\begin{align*}
\mathbf{W}&=\{(x_1,x_2)\in(2\mathbb{Z}+1)\times (2\mathbb{Z})\,;\,1\le x_1\le 2n-1, 0\le x_2\le 2n\},\\
\mathbf{B}&=\{(x_1,x_2)\in(2\mathbb{Z})\times (2\mathbb{Z}+1)\,;\,0\le x_1\le 2n, 1\le x_2\le 2n-1\}.
\end{align*}
The black and white vertices can be divided into two types. For $i\in\{0,1\}$, we write
\begin{align*}
\mathbf{B}_i&=\{(x_1,x_2)\in\mathbf{B}\,;\,x_1+x_2\,\text{mod} 4=2i+1\},\\
\mathbf{W}_i&=\{(x_1,x_2)\in\mathbf{W}\,;\,x_1+x_2\,\text{mod} 4=2i+1\}.
\end{align*}
Let
\begin{equation}\label{e1e2}
e_1=(1,1),\quad e_2=(-1,1).
\end{equation}
The Kasteleyn matrix for the two-periodic Aztec diamond of size $n=4m$ is defined by
\begin{equation} \label{TPKasteleyn}
      \mathbb{K}^{\text{TP}}_{a,1}(x,y)=\left\{ \begin{array}{ll}
                     a (1-j) +  j  & \mbox{if } y=x+e_1, x \in \mathtt{B}_j \\
                     (a j + (1-j) ) \mathrm{i} & \mbox{if } y=x+e_2, x \in \mathtt{B}_j\\
                     a j + (1-j)  & \mbox{if } y=x-e_1, x \in \mathtt{B}_j \\
                     (a (1-j) + j ) \mathrm{i} & \mbox{if } y=x-e_2, x \in \mathtt{B}_j\\
			0 & \mbox{if $(x,y)$ is not an edge}
                     \end{array} \right.,
\end{equation}
where $\mathrm{i}^2=-1$.

If we look at figure \ref{fig:twoperiodicsim} we see that there seems to be three types of local structures in the random tiling. Also, if we look at the random height function,
see figures \ref{fig:TPAztecheight} and \ref{fig:TPAztecheight2}, we see three types of surface structure locally. In fact, there is a classification of all the possible phases, translation invariant Gibbs states, on the corresponding infinite graphs in this type of dimer models. We will not go into the details, see \cite{KOS}. We will call the three types of phases \emph{liquid}, \emph{solid} and \emph{gas}. They can be characterized by the decay of correlations between dimers. Consider two dimers at distance $r$. In a liquid phase the correlation between the dimers decay like a power law, typically like $c/r^2$. In a gas phase the correlation decays exponentially in $r$. Finally, in a solid phase the dimers are perfectly correlated. The Gibbs states can be specified
by an infinite inverse Kasteleyn matrix, $\mathbb{K}^{-1}_{\text{gas}}$, $\mathbb{K}^{-1}_{\text{liquid}}$ and $\mathbb{K}^{-1}_{\text{solid}}$, which are given by certain
double contour integrals (or as Fourier coefficients), see (\ref{Kinversegas}). The infinite plane dimer models have a height function with a certain average slope which
characterizes it, \cite{KOS}. We expect the finite inverse Kasteleyn matrix $\mathbb{K}_n^{-1}$, when rescaled around a particular point, to converge to one of these three types
of infinite inverse Kasteleyn matrices. Which limit we get should, conjecturally, be given by the slope of the asymptotic height function at that point. This can be verified
in special cases, e.g. \cite{Gor}, but has not been proved more generally.

If we think of the height function, see figures \ref{fig:TPAztecheight} and \ref{fig:TPAztecheight2}, as some kind of crystal, we see that we can think of the solid parts as representing perfect crystal facets, the liquid parts as a
rough crystal surface and the gas phase as representing a crystal facet with local random dislocations.

We see that in the two-periodic Aztec diamond we have two types of boundaries between regions with different types of phases. There is a liquid-solid boundary and a
liquid-gas boundary. Looking at the pictures, we see that, just as for the standard Aztec diamond, the liquid-solid boundary is well-defined microscopically. It is located 
where we see the first dimer/domino breaking the regular pattern of the solid region. We can expect the fluctuations of this boundary to be described, in an appropriate
scaling limit, by the Airy process. If we look at the liquid-gas boundary however, we see that it is not  clear how to define the boundary microscopically, at least if we look
only locally. Hence, the question whether this boundary is also in some sense described by the Airy process is much less clear. The two-periodic Aztec diamond
is a model where we can try to investigate this problem.

\subsection{The inverse Kasteleyn matrix}\label{sec:8.2}

Somewhat remarkably, it is possible to find a useful formula for the inverse of the Kasteleyn matrix (\ref{TPKasteleyn}). Before we can present this formula we need some notation.
The size of the Aztec diamond is always divisible by 4,$n=4m$, below. Let
\begin{equation*}
c=\frac{a}{1+a^2}
\end{equation*}
and
\begin{equation*}
\tilde{c}(u,v)=2(1+a^2)+a(u+u^{-1})(v+v^{-1}).
\end{equation*}
The last expression is related to the so called characteristic or spectral polynomial $P(z,w)$ of the model, \cite{KOS}, via
\begin{equation*}
P(z,w)=-\tilde{c}(\sqrt{\frac zw}, \sqrt{zw}).
\end{equation*}
For $\eps_1,\eps_2\in\{0,1\}$, we write
\begin{equation*}
h(\eps_1,\eps_2)=\eps_1(1-\eps_2)+\eps_2(1-\eps_1).
\end{equation*}
For $x=(x_1,x_2)\in\mathbf{W}_{\eps_1}$ and $y=(y_1,y_2)\in\mathbf{B}_{\eps_2}$, we set
\begin{equation}\label{Kinversegas}
\mathbb{K}_{r_1,r_2}(x,y)=-\frac{i^{1+h(\eps_1,\eps_2)}}{(2\pi \mathrm{i})^2}\int_{\gamma_{R_1}}\frac {du_1}{u_1}\int_{\gamma_{R_2}}\frac {du_2}{u_2}
\frac{a^{\eps_2}u_2^{1-h(\eps_1,\eps_2)}+a^{1-\eps_1}u_1u_2^{h(\eps_1,\eps_2)}}
{\tilde{c}(u_1,u_2)u_1^{(x_1-y_1+1)/2}u_2^{(x_2-y_2+1)/2}},
\end{equation}
where $R_1=\sqrt{r_1/r_2}$, $R_2=\sqrt{r_1r_2}$. If $(r_1,r_2)=(1,1)$, the formula in (\ref{Kinversegas}) gives the full plane inverse Kasteleyn matrix for a gas phase. If $P(z,w)=0$
for some $(z,w)\in\gamma_{r_1}\times\gamma_{r_2}$, then we have a liquid phase. For the relation between this formula and the formula in \cite{KOS} see \cite{CJ}.

Write
\begin{equation*}
G(w)=\frac 1{\sqrt{2c}}(\omega-\sqrt{\omega^2+2c}),
\end{equation*}
where the square root is defined by $\sqrt{\omega^2+2c}=e^{\frac 12\log(\omega+\mathrm{i}\sqrt{2c})+\frac 12\log(\omega-\mathrm{i}\sqrt{2c})}$, $\omega\in\mathbb{C}\setminus
\mathrm{i}[-\sqrt{2c},\sqrt{2c}]$, with the argument in the logarithms taken in $(-\pi/2,3\pi/2)$. Set
\begin{equation*}
{ H_{x_1,x_2}(\omega)}=\frac{\omega^{n/2}  G \left(\omega \right)^{\frac{n-x_1}{2}}}{G \left(\omega^{-1} \right)^{\frac{n-x_2}{2}}},
\end{equation*}
and define, for $\eps_1,\eps_2\in\{0,1\}$, $x=(x_1,x_2)\in\mathbf{W}_{\eps_1}$, $y=(y_1,y_2)\in\mathbf{B}_{\eps_2}$, and $\sqrt{2c}<p<1$,
\begin{equation}\label{Bepsilon}
\mathcal{B}_{\eps_1,\eps_2}(a;x,y)=\frac{i^{\frac{x_2-x_1+y_1-y_2}{2}}}{(2\pi \mathrm{i})^2}\int_{\gamma_{p}}\frac{d\omega_1}{\omega_1}
\int_{\gamma_{1/p}}d\omega_2\frac{\omega_2}{\omega_2^2-\omega_1^2}\frac{H_{x_1+1,x_2}(\omega_1)}{H_{y_1,y_2+1}(\omega_2)}
Z_{\eps_1,\eps_2}(a;\omega_1,\omega_2).
\end{equation}
Here, $Z_{\eps_1,\eps_2}$ is a complicated function that depends on $x,y$ only through $\eps_1,\eps_2$ and is independent of $n$. See \cite{CJ}
for its precise definition. We can now give the formula for the inverse Kasteleyn matrix for the two-periodic Aztec diamond.

\begin{thma}\label{thm:KinverseTPAz}
Consider the two-periodic Aztec diamond with Kasteleyn matrix (\ref{TPKasteleyn}) and $n=4m$, $m\ge 1$. Let $x=(x_1,x_2)\in\mathbf{W}_{\eps_1}$ and $y=(y_1,y_2)\in\mathbf{B}_{\eps_2}$, $\eps_1,\eps_2\in\{0,1\}$. Then,
\begin{align}\label{KinverseTPAz}
&\big(\mathbb{K}^{\text{TP}}_{a,1}\big)^{-1}(x,y)=\mathbb{K}_{1,1}^{-1}(x,y)-\bigg[\mathcal{B}_{\eps_1,\eps_2}(a;x_1,x_2,y_1,y_2)-\frac {\mathrm{i}}a(-1)^{\eps_1+\eps_2}
\mathcal{B}_{1-\eps_1,\eps_2}(1/a;2n-x_1,x_2,2n-y_1,y_2)\notag\\
&-\frac {\mathrm{i}}a(-1)^{\eps_1+\epsilon_2}\mathcal{B}_{\eps_1,1-\eps_2}(1/a;x_1,2n-x_2,y_1,2n-y_2)
+\mathcal{B}_{1-\eps_1,1-\eps_2}(a;2n-x_1,2n-x_2,2n-y_1,2n-y_2)\bigg].
\end{align}
\end{thma}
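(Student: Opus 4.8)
The plan is to verify directly that the right-hand side of (\ref{KinverseTPAz}), call it $F(x,y)$ for $x\in\mathbf{W}_{\eps_1}$ and $y\in\mathbf{B}_{\eps_2}$, is the inverse of the Kasteleyn matrix, i.e. that
\[
\sum_{w\in\mathbf{W}}\mathbb{K}^{\text{TP}}_{a,1}(b,w)\,F(w,b')=\delta_{b,b'}
\]
for all $b,b'\in\mathbf{B}$. Since $\det\mathbb{K}^{\text{TP}}_{a,1}=\pm Z\neq 0$ by Theorem \ref{thm:Kasteleyn}, the matrix is invertible and this one-sided identity suffices. For fixed $b$ only the four neighbours $w=b\pm e_1,\,b\pm e_2$ contribute, so the left-hand side is the image of $F(\cdot,b')$ under a four-term Kasteleyn difference operator whose coefficients are read off from (\ref{TPKasteleyn}). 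The point is then to evaluate this operator separately on the full-plane term $\mathbb{K}_{1,1}^{-1}$ and on the four $\mathcal{B}$-terms.

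First I would treat the interior. Inserting the contour representation (\ref{Kinversegas}) for $\mathbb{K}_{1,1}^{-1}(w,b')$ and shifting $w$ by $\pm e_1,\pm e_2$ multiplies the integrand by the corresponding monomials in $u_1,u_2$. The key computation is that the linear combination dictated by the entries of (\ref{TPKasteleyn}) reconstructs exactly the factor $\tilde{c}(u_1,u_2)$ in the denominator of (\ref{Kinversegas}); this is precisely the statement that $\tilde{c}$, equivalently the spectral polynomial $P(z,w)=-\tilde{c}(\sqrt{z/w},\sqrt{zw})$, is the Fourier symbol of the two-periodic Kasteleyn operator. After this cancellation the integrand is a Laurent monomial in $u_1,u_2$ whose double contour integral produces the Kronecker delta $\delta_{b,b'}$. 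Thus $\mathbb{K}_{1,1}^{-1}$ alone already yields the correct delta in the bulk.

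Next I would show that the four $\mathcal{B}$-terms contribute nothing in the interior but are exactly what is needed to correct the behaviour at the sides of the diamond. The functions $H_{x_1,x_2}(\omega)$ are separated solutions of the homogeneous Kasteleyn equation: this follows from the quadratic relation $\sqrt{2c}\,\bigl(G(\omega)-G(\omega)^{-1}\bigr)=2\omega$ satisfied by $G$, which makes the four-term difference operator annihilate both $H_{x_1+1,x_2}(\omega_1)$ and $1/H_{y_1,y_2+1}(\omega_2)$. Hence each $\mathcal{B}_{\eps_1,\eps_2}$ is a homogeneous solution and drops out of the interior identity, leaving $\delta_{b,b'}$ intact. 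The role of the reflections $x_1\mapsto 2n-x_1$ and $x_2\mapsto 2n-x_2$ across the two symmetry lines $x_1=n$, $x_2=n$ of the diamond — together with $a\mapsto 1/a$ and $\eps_i\mapsto 1-\eps_i$, which interchange the two weight-sublattices — is a method-of-images construction: the group $(\mathbb{Z}/2)^2$ of such reflections generates the four terms, and the connection factors $Z_{\eps_1,\eps_2}(a;\omega_1,\omega_2)$ are tuned so that the sum of the full-plane term and its three images satisfies the vanishing conditions imposed on the sites just outside the boundary, where $\mathbb{K}^{\text{TP}}_{a,1}$ has no edges.

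The main obstacle is this boundary matching. Checking that the full-plane kernel plus its three reflected images simultaneously meets the correct conditions on all four sides, including consistency at the corners of the diamond, requires the precise form of the connection coefficients $Z_{\eps_1,\eps_2}$ from \cite{CJ} and a careful deformation of the $\omega_1,\omega_2$-contours, exploiting the analyticity of $G$ off the cut $\mathrm{i}[-\sqrt{2c},\sqrt{2c}]$ and the choices $\sqrt{2c}<p<1$, to extract the relevant residues of $G(\omega)^{\pm 1}$. Because each image carries factors $G(\omega)^{(n-x_1)/2}$ and $G(\omega^{-1})^{(n-x_2)/2}$, one must control how these grow or decay as the argument crosses the boundary value, and the balance between the $a$- and $1/a$-images, weighted by the signs $(-1)^{\eps_1+\eps_2}$, is exactly what makes the cancellation work. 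Once the boundary identities are established, the interior delta from the second step completes the verification, and uniqueness of the inverse finishes the proof.
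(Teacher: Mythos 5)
Your proposal is a strategy outline rather than a proof, and the gap sits exactly where the content of the theorem lies. The paper itself offers no proof of Theorem \ref{thm:KinverseTPAz}: it states explicitly that the argument is ``complicated and not very natural'' and defers to \cite{CJ}, where the formula is obtained by a long computation starting from the Chhita--Young generating function for the inverse Kasteleyn matrix \cite{CY} --- not by the direct verification you propose. A verification argument is in principle legitimate (the matrix is square and invertible, so a one-sided identity suffices, and the bulk step --- that the four-term Kasteleyn operator applied to $\mathbb{K}_{1,1}^{-1}$ reconstructs $\tilde{c}(u_1,u_2)$ and yields $\delta_{b,b'}$ --- is a believable Fourier computation). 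But everything that distinguishes this theorem from a triviality is in the boundary matching, and there you only assert that the connection factors $Z_{\eps_1,\eps_2}$ ``are tuned so that'' the sum of the four images satisfies the boundary conditions. Since $Z_{\eps_1,\eps_2}$ is never defined in this paper (it is explicitly left to \cite{CJ}), and your argument neither supplies it nor derives the constraints that would determine it, the claim cannot be checked from what you have written; you have named the main obstacle and then stepped around it.

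Two further points would need repair even if $Z_{\eps_1,\eps_2}$ were supplied. First, the ``method of images'' picture is not literal here: the reflections $x_1\mapsto 2n-x_1$ and $x_2\mapsto 2n-x_2$ are through the centre lines $x_1=n$, $x_2=n$ and map the diamond to itself, so the three extra terms are not image charges placed outside the domain, and a finite (four-term) image sum would in any case not be expected to satisfy conditions on all four sides simultaneously without the additional structure carried by the $a\leftrightarrow 1/a$ and $\eps_i\leftrightarrow 1-\eps_i$ exchanges and the precise prefactors $\pm\frac{\mathrm{i}}{a}(-1)^{\eps_1+\eps_2}$; verifying that this particular combination works is the whole difficulty. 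Second, the claim that each $\mathcal{B}_{\eps_1,\eps_2}$ is annihilated by the Kasteleyn operator in the interior must be checked in the first (white) argument with the correct sublattice-dependent coefficients from (\ref{TPKasteleyn}); the identity $\sqrt{2c}\,(G(\omega)-G(\omega)^{-1})=2\omega$ is the right ingredient, but by itself it does not show that the $\eps$-dependent numerators $Z_{\eps_1,\eps_2}$ intertwine correctly between the two sublattices. As it stands the proposal is a reasonable plan for an alternative proof, but it does not establish the theorem.
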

We will not discuss the proof of the theorem which is complicated and not very natural. The theorem is proved in \cite{CJ} based on a formula for the generating
function for the inverse Kasteleyn matrix derived in \cite{CY}. 

Using saddle point arguments, it is possible to derive an asymptotic formula for the inverse Kasteleyn
matrix in the vicinity of the part of the lower left to upper right diagonal that lies in the third quadrant in the Aztec diamond graph. The analysis could be done at other
points also but would be more complicated and was not carried out in \cite{CJ}. Here, we only consider points close to the liquid-gas boundary, see figure \ref{fig:zoompoint}.
Set
\begin{equation*}
c_0=\frac{(1-2c)^{2/3}}{(c(1+2c))^{1/3}},\quad \lambda_1=\frac{\sqrt{1-2c}}{c_0},\quad \lambda_2=\frac{(1-2c)^{3/2}}{cc_0^2}
\end{equation*}
(slight change in notation compared to \cite{CJ} and \cite{BCJ}), and
\begin{equation}
\mathtt{g}_{\eps_1,\eps_2} =
\left\{
\begin{array}{ll}
\frac{\mathrm{i} \left(\sqrt{a^2+1}+a\right)}{1-a}
 &  \mbox{if } (\eps_1,\eps_2)=(0,0) \\
\frac{\sqrt{a^2+1}+a-1}{\sqrt{2a} (1-a) }
 & \mbox{if } (\eps_1,\eps_2)=(0,1) \\
-\frac{\sqrt{a^2+1}+a-1}{\sqrt{2a} (1-a) }
&\mbox{if } (\eps_1,\eps_2)=(1,0)\\
\frac{\mathrm{i}\left(\sqrt{a^2+1}-1\right)}{(1-a) a}
&\mbox{if } (\eps_1,\eps_2)=(1,1).
\end{array}
\right.
\end{equation}

\begin{figure}
\begin{center}
\includegraphics[height=2in]{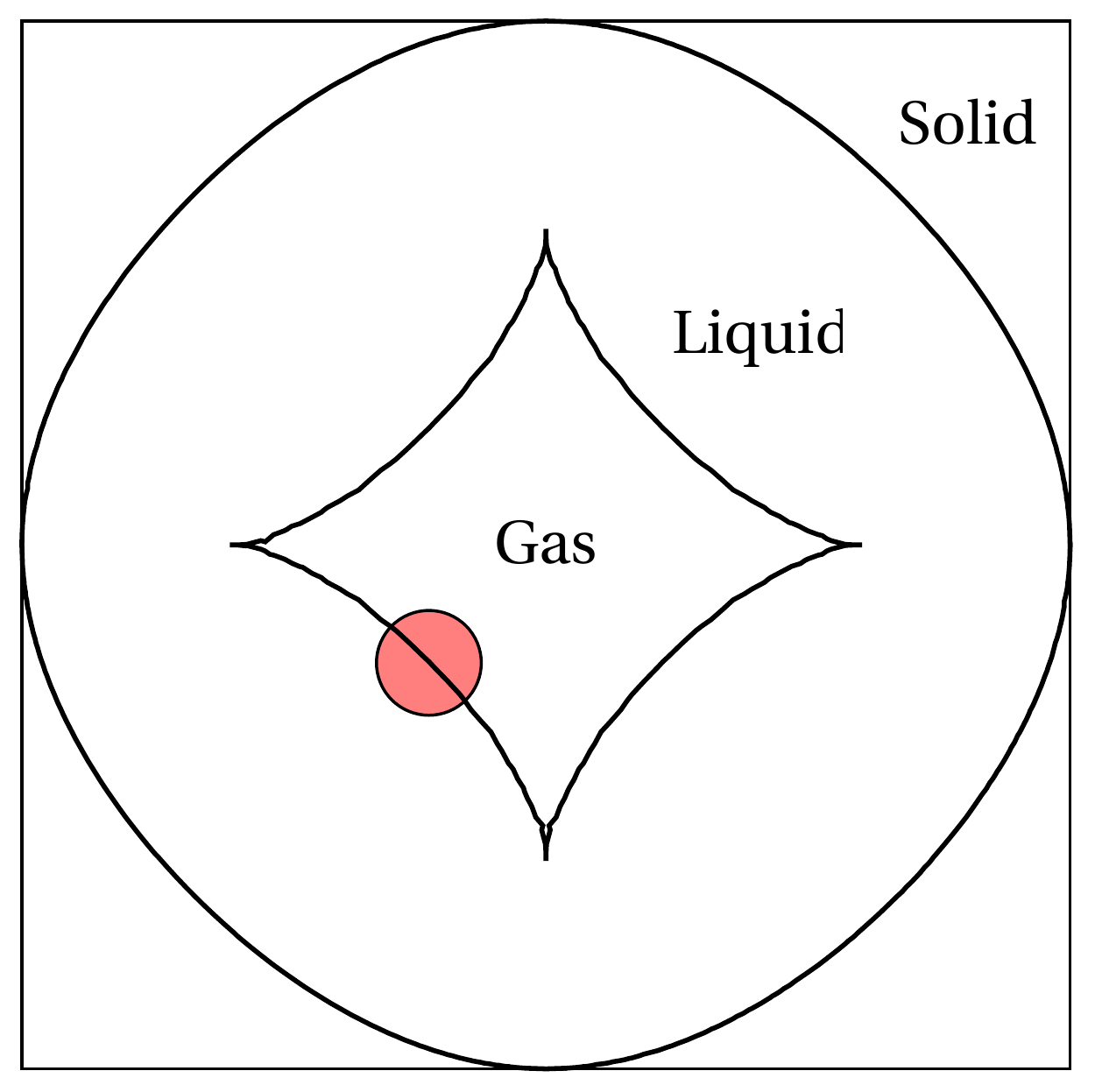}
\caption{The point at the liquid-gas boundary where we study the asymptotics.}
\label{fig:zoompoint}
\end{center}
\end{figure}

Zooming in around the red point in figure \ref{fig:zoompoint} corresponds the right scaling is
\begin{align}\label{TPscaling}
x&=\big[n(1-\frac 12\sqrt{1-2c})+\alpha_x\lambda_1n^{1/3}\big]e_1-\big[\beta_x\lambda_2n^{2/3}\big]e_2+u_1,\notag\\
y&=\big[n(1-\frac 12\sqrt{1-2c})+\alpha_y\lambda_1n^{1/3}\big]e_1-\big[\beta_y\lambda_2n^{2/3}\big]e_2+u_2,
\end{align}
where $\alpha_x,\alpha_y,\beta_x,\beta_y\in\mathbb{R}$ are the new rescaled variables (we have $\beta_x\to-\beta_x$ compared to \cite{CJ})
and $u_1,u_2\in\mathbb{Z}$.

\begin{thma}\label{thm:TPasymptotics}
Let $x=(x_1,x_2)\in\mathbf{W}_{\eps_1}$ and $y=(y_1,y_2)\in\mathbf{B}_{\eps_2}$, $\eps_1,\eps_2\in\{0,1\}$, be given by (\ref{TPscaling}).
Then,
\begin{align}\label{TPasymptotics}
\big(\mathbb{K}^{\text{TP}}_{a,1}\big)^{-1}(x,y)&=\mathbb{K}_{1,1}^{-1}(x,y)-\mathrm{i}^{y_1-x_1+1}|G(\mathrm{i})|^{\frac 12(-2-x_1+x_2+y_1-y_2)}
e^{\beta_x\alpha_x-\beta_y\alpha_y+\frac 23(\beta_x^3-\beta_y^3)}
\notag\\
&\times c_0\mathtt{g}_{\eps_1,\eps_2}\tilde{K}_{\text{extAi}}(\beta_x,\alpha_x+\beta_x^2;\beta_y,\alpha_y+\beta_y^2)n^{-1/3}(1+O(n^{-1/3}))
\end{align}
as $n\to\infty$ uniformly for $\alpha_x,\alpha_y,\beta_x,\beta_y, u_1,u_2$ in a compact set, where $\tilde{K}_{\text{extAi}}$ is given by (\ref{KtildeextAiry}).
\end{thma}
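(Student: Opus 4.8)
The plan is to perform a steepest descent analysis of the exact formula (\ref{KinverseTPAz}). Since the gas-phase term $\mathbb{K}_{1,1}^{-1}(x,y)$ appears identically on both sides of (\ref{TPasymptotics}), the whole problem reduces to the large-$n$ behaviour of the bracketed sum of the four functions $\mathcal{B}_{\cdot,\cdot}$. I would fix one term, say $\mathcal{B}_{\eps_1,\eps_2}(a;x,y)$ as defined in (\ref{Bepsilon}), insert the scaling (\ref{TPscaling}), and extract the large parameter hidden in the ratio $H_{x_1+1,x_2}(\omega_1)/H_{y_1,y_2+1}(\omega_2)$. Using $x_1\approx A+B$, $x_2\approx A-B$ with $A=n(1-\tfrac12\sqrt{1-2c})+\alpha_x\lambda_1 n^{1/3}$ and $B=\beta_x\lambda_2 n^{2/3}$ (and similarly for $y$), the exponent organises into a leading piece $n\,f(\omega)$ with
\[
f(\omega)=\frac12\log\omega+\frac{\sqrt{1-2c}}{4}\log\frac{G(\omega)}{G(\omega^{-1})},
\]
followed by perturbations of orders $n^{2/3}$ (carrying $\beta$) and $n^{1/3}$ (carrying $\alpha$). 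The constant $1-\tfrac12\sqrt{1-2c}$ in (\ref{TPscaling}) is exactly the one that places us at a degenerate saddle of $f$.

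The key algebraic fact to establish is that $\omega_c=\mathrm{i}$ is a \emph{coalescing (cubic) saddle point}, that is $f'(\mathrm{i})=f''(\mathrm{i})=0$ while $f'''(\mathrm{i})\neq0$; this double degeneracy is the analytic fingerprint of the liquid--gas edge and is what forces the Airy scaling exponents $1/3$ and $2/3$. Granting this, I would make the local substitution $\omega_1=\mathrm{i}\bigl(1+c_0 z\,n^{-1/3}+\cdots\bigr)$ together with the matching substitution for $\omega_2$, where $c_0$, $\lambda_1$, $\lambda_2$ (see their definitions before (\ref{TPscaling})) are precisely the constants that normalise the cubic term of $f$ to $\tfrac13 z^3$ and convert the $\beta$- and $\alpha$-perturbations into the linear exponents $-\beta_x z^2$ and $(\alpha_x+\beta_x^2)z$ appearing in the double-contour representation of $\tilde K_{\text{extAi}}$ recorded just after (\ref{KtildeextAiry}). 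After this rescaling the integrand converges pointwise to the Airy integrand, with parameters $\rho_1=\beta_x$, $\xi_1=\alpha_x+\beta_x^2$, $\rho_2=\beta_y$, $\xi_2=\alpha_y+\beta_y^2$, the $\beta^2$-shift arising from completing the square in the quadratic correction (the parabolic nature of the boundary curve).

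Next I would justify the contour surgery. The circles $\gamma_p$ and $\gamma_{1/p}$ must be deformed, using analyticity of $G$ away from the cut $\mathrm{i}[-\sqrt{2c},\sqrt{2c}]$, so that they pass through $\omega_c=\mathrm{i}$ (and, by the $\omega\mapsto-\omega$ symmetry, through $-\mathrm{i}$) along the steepest-descent rays matching $\mathcal{C}_1,\mathcal{C}_2$; the contribution away from the saddle is exponentially small because $\Re f$ is strictly negative there. I would split the kernel factor $\tfrac{\omega_2}{\omega_2^2-\omega_1^2}=\tfrac12\bigl(\tfrac1{\omega_2-\omega_1}+\tfrac1{\omega_2+\omega_1}\bigr)$: the first summand produces the $\tfrac1{z-w}$ factor of $\tilde K_{\text{extAi}}$, while the second, together with the three reflected $\mathcal{B}$-terms built from $2n-x_1$, $2n-x_2$ and the symbol $1/a$, is routed through the $-\mathrm{i}$ saddle; one then checks which combination survives and which is subdominant. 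The non-exponential prefactor in (\ref{TPasymptotics}) is assembled from these pieces: the values of $H$ at the saddle give the powers of $|G(\mathrm{i})|$ and of $\mathrm{i}$, the quadratic Gaussian factor yields $e^{\beta_x\alpha_x-\beta_y\alpha_y+\frac23(\beta_x^3-\beta_y^3)}$, the Jacobian of the $n^{-1/3}$-rescaling produces the $n^{-1/3}$, and evaluating $Z_{\eps_1,\eps_2}(a;\omega_1,\omega_2)$ at $(\mathrm{i},\mathrm{i})$ produces $c_0\mathtt{g}_{\eps_1,\eps_2}$. That this whole prefactor has the cocycle shape $g_n(x)/g_n(y)$ is consistent with the kernel non-uniqueness noted at (\ref{conjugatekernel}).

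The main obstacle I expect is the \textbf{rigorous steepest descent with a coalescing saddle sitting close to the branch cut} of $G$: one must deform $\gamma_p,\gamma_{1/p}$ through $\pm\mathrm{i}$ while keeping clear of $\mathrm{i}[-\sqrt{2c},\sqrt{2c}]$ and of the pole at $\omega_2=\pm\omega_1$, and obtain tail estimates that are \emph{uniform} in $\alpha_x,\alpha_y,\beta_x,\beta_y$ and the integer shifts $u_1,u_2$ over compacts, which is what ultimately yields the stated $O(n^{-1/3})$ error. A secondary difficulty is the precise bookkeeping that combines the four reflected $\mathcal{B}$-terms, each with its own symbol ($a$ versus $1/a$) and its own $Z_{\eps_1,\eps_2}$, into exactly the single kernel $\tilde K_{\text{extAi}}$ with the correct $\mathtt{g}_{\eps_1,\eps_2}$; getting the relative signs and the cancellation of the would-be Gaussian part right is delicate but, given the explicit $Z$ from \cite{CJ}, is a finite computation rather than a conceptual gap.
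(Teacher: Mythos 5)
The paper does not prove this theorem itself but defers entirely to \cite{CJ}, and your outline --- steepest descent on the four $\mathcal{B}_{\eps_1,\eps_2}$ integrals under the scaling (\ref{TPscaling}), with a coalescing cubic saddle at $\omega=\pm\mathrm{i}$, a local $n^{-1/3}$ rescaling normalised by $c_0,\lambda_1,\lambda_2$ that produces the Airy integrand with parameters $(\beta,\alpha+\beta^2)$, and the routing of the reflected terms through the two saddles --- is exactly the strategy carried out there. The pieces you flag as remaining (verifying $f'(\mathrm{i})=f''(\mathrm{i})=0$ with $f'''(\mathrm{i})\neq 0$, the uniform tail estimates for contours passing near the branch cut $\mathrm{i}[-\sqrt{2c},\sqrt{2c}]$, and the evaluation of $Z_{\eps_1,\eps_2}$ at the saddle to produce $c_0\mathtt{g}_{\eps_1,\eps_2}$) are precisely the computational content of that reference, which you have correctly identified rather than carried out.
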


For the proof see \cite{CJ}.

We see from (\ref{TPasymptotics}) that the main term in the asymptotics is just the infinite full plane inverse Kasteleyn matrix for a gas phase. The term involving
the extended Airy kernel appears only at a lower order. This can be vaguely interpreted as follows. What we see at the liquid-gas boundary is essentially a
gas phase, but there are also some long range connected structures, the connected curves in figure \ref{fig:twoperiodicsim}, that have a distance of order $n^{1/3}$
from each other. Thus, we cannot directly see an Airy process at the liquid-gas boundary. This is related to the difficulty described above of defining a precise microscopic
liquid-gas boundary and separating its statistical behaviour from the gas background.

\begin{remark} {\rm There is an analogue of theorem \ref{thm:TPasymptotics} at the liquid-solid boundary. This theorem could be used to prove a theorem similar to theorem
\ref{thm:Azasymptotics}.}
\end{remark}

Looking at figures \ref{fig:TPAztecheight} and \ref{fig:TPAztecheight2}. we can see that the long-range structures at the liquid-gas boundary that should give the Airy process, or more generally the Airy line ensemble, are
related to long range correlated changes in the height function. On top of these height changes we have short-range fluctuations in the height function which can be
thought of as coming from the "gas phase". We can attempt to catch the long-range effects by somehow averaging out the short-range fluctuations. This is the idea behind
the next theorem, which shows that in some sense we do have convergence to the Airy process at the liquid-gas boundary also. We will not be precise with all definitions. For a complete treatment see \cite{BCJ}.

\subsection{Airy point process limit theorem}\label{sec:8.3}

Let $A_1,\dots, A_L$ be disjoint closed and bounded intervals in $\mathbb{R}$. We can define discrete analogues $I_1,\dots, I_L$ of $A_1,\dots, A_L$ in the Aztec diamond centered
at the liquid-gas boundary. Essentially, $I_j$ consists of points centered on edges in the Aztec diamond graph,
\begin{equation*}
\big[n(1-\frac 12\sqrt{1-2c})\big]e_1+ke_1+(\frac 12,\frac 12)
\end{equation*}
such that $k/dn^{1/3}\in A_j$, $1\le j\le L$, and $d$ is an appropriate constant. Let $M=\big[(\log n)^4\big]$ and take $M$ copies $I_{j,k}$, $1\le k\le M$, of $I_j$ in the direction 
$e_2$ separated by a distance of order $\sqrt{M}$, see figure \ref{fig:linesmeasure}. Let
\begin{equation*}
\Delta h(I_{j,k})=\text{the height change along}\,\, I_{j,k}.
\end{equation*}
This can be expressed as a signed sum over $a$-edges, i.e. dimers covering edges in $I_{j,k}$ with weight $a$. The height change captures the height change due to long-range
structures cutting through the interval and short range height changes close to the endpoints of the interval. Since correlations in the gas phase decay exponentially
with distance, we can hope that by averaging $\Delta h(I_{j,k})$ over $k$ only the height change due to the long-range structures visible in the figures will survive. Motivated
by this we define a random measure by

\begin{figure}
\begin{center}
\includegraphics[height=2in]{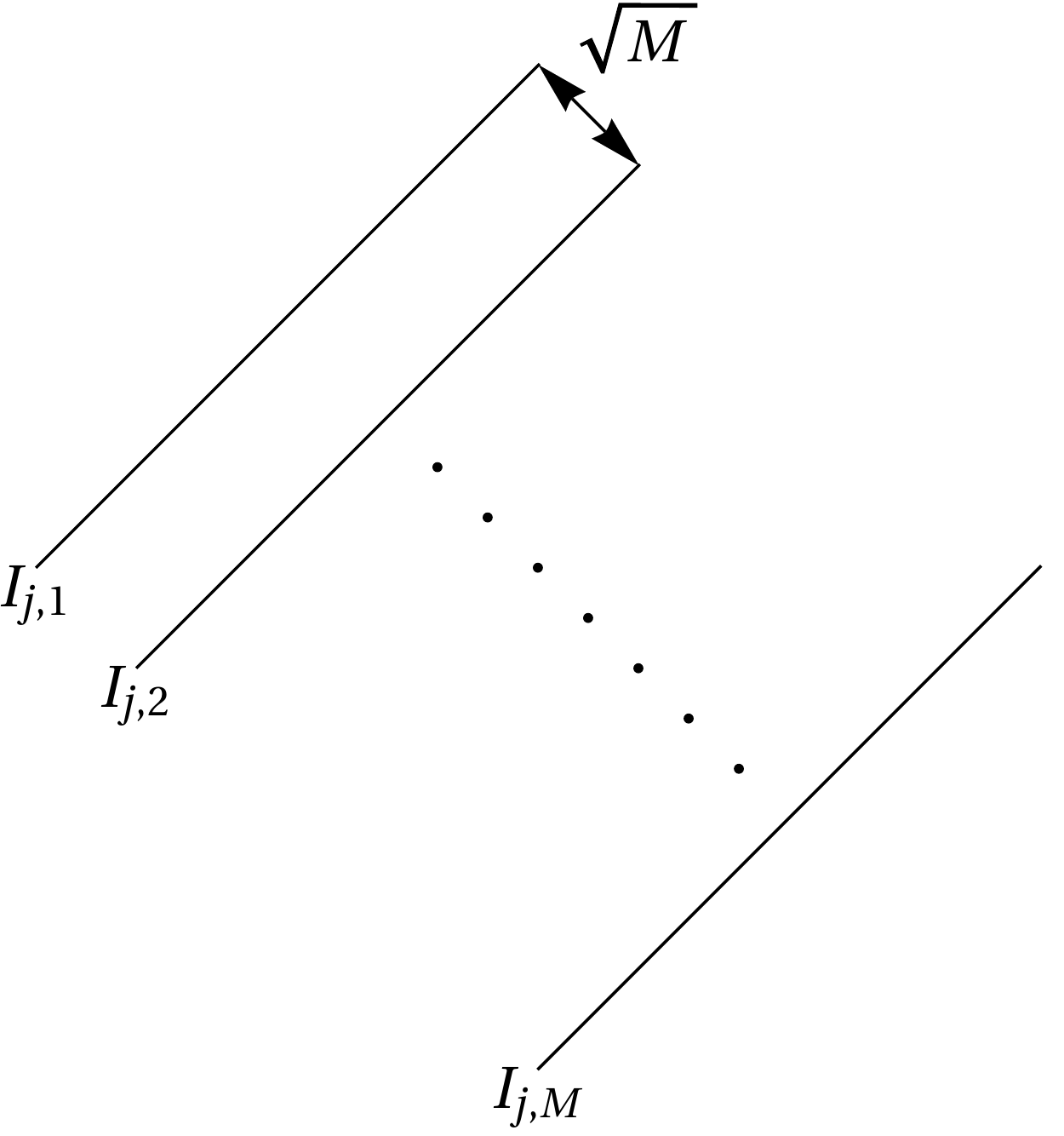}
\caption{Copies of the discrete intervals.}
\label{fig:linesmeasure}
\end{center}
\end{figure}

\begin{equation}\label{randommeasure}
\mu_n(A_j)=\frac 1M\sum_{k=1}^M\Delta h(I_{j,k}),
\end{equation}
$1\le j\le L$. The Airy kernel defines a determinantal point process on $\mathbb{R}$ which we can think of as a random measure denoted by $\mu_{\Ai}$. We then have the
following theorem.

\begin{thma}\label{thm:TPlimit}
We have that
\begin{equation*}
(\mu_n(A_1),\dots,\mu_n(A_L))\to (\mu_{\Ai}(A_1),\dots,\mu_{\Ai}(A_L))
\end{equation*}
in distribution as $n\to\infty$. Thus $\mu_n$ converges weakly to $\mu_{\Ai}$.
\end{thma}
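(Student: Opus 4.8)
The plan is to prove convergence in distribution of the vector $(\mu_n(A_1),\dots,\mu_n(A_L))$ by the method of moments, exploiting the fact that each height change $\Delta h(I_{j,k})$ is a \emph{linear statistic} of the underlying dimer process. By Theorem \ref{thm:Kenyonformula} the $a$-edges form a determinantal point process whose correlation kernel is built from the inverse Kasteleyn matrix $(\mathbb{K}^{\text{TP}}_{a,1})^{-1}$, and $\Delta h(I_{j,k})$ is a signed sum of the indicators that these edges are occupied. Consequently every joint moment $\mathbb{E}\big[\prod_{j}\mu_n(A_j)^{m_j}\big]$ expands into a finite sum of products of kernel entries summed over the edges in the intervals $I_{j,k}$ and their $M$ copies. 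I would first record this expansion using the determinantal formulas (\ref{Laplacetransf})--(\ref{fredholmdet}), check that the first moment reproduces the intensity of $\mu_{\Ai}$, and reduce the theorem to showing that these joint moments converge to the corresponding joint moments of the counting measure $\mu_{\Ai}$ of the Airy point process, together with the standard fact that $\mu_{\Ai}$ is determined by its moments.

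The mechanism producing the Airy limit is the averaging over the $M=[(\log n)^4]$ copies $I_{j,k}$, combined with the decomposition of the inverse Kasteleyn matrix in Theorem \ref{thm:TPasymptotics} into a \emph{gas part} $\mathbb{K}^{-1}_{1,1}$ and an \emph{Airy part} of size $n^{-1/3}$ carrying the factor $\tilde{K}_{\text{extAi}}$. The gas part has exponentially decaying correlations, so in $\mathrm{Cov}(\Delta h(I_{j,k}),\Delta h(I_{j',k'}))$ its contribution is negligible unless $k=k'$; summing $\tfrac1{M^2}\sum_{k,k'}$ then leaves only the $O(M)$ diagonal gas terms, which are suppressed to $O(1/M)\to0$. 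By contrast the Airy part connects points that are far apart in the $e_2$ direction \emph{coherently}, so its contribution is essentially the same for every pair $(k,k')$, and the double average $\tfrac1{M^2}\sum_{k,k'}$ preserves it at order one. The key computation is to show that, after inserting the scaling (\ref{TPscaling}), the Airy part reassembles into the extended Airy kernel $K_{\text{extAi}}$ at the rescaled variables, and that the resulting moments are exactly those of $\mu_{\Ai}$.

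Concretely I would proceed as follows: (i) write the joint factorial moments of $(\mu_n(A_j))_j$ as the appropriate combination of kernel entries; (ii) substitute the uniform asymptotics (\ref{TPasymptotics}) and extend their validity to the whole collection of edges in $\bigcup_{j,k}I_{j,k}$, a region of extent of order $M^{3/2}n^{2/3}$ in the $e_2$ direction, which is precisely what forces the slowly growing choice $M=[(\log n)^4]$; (iii) separate each kernel factor into its gas and Airy parts and expand the products, classifying each resulting term by the number of gas factors it carries and by the pattern of copy indices; (iv) show that terms with an uncompensated gas factor linking distinct copies, or an excess of diagonal gas factors, vanish after the $1/M$ averaging, while the purely Airy contributions converge to the determinantal moments of $\mu_{\Ai}$.

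The main obstacle will be steps (iii)--(iv): controlling, uniformly in $n$, the many cross terms that mix the $O(1)$ gas kernel with the $O(n^{-1/3})$ Airy correction once they are summed over the $\sim n^{1/3}$ edges in each interval and over the $M$ copies. One must quantify the exponential decay of the gas correlations against the separation $\sqrt{M}$ of the copies, bound the error $O(n^{-1/3})$ in (\ref{TPasymptotics}) uniformly over the growing window, and verify that the surviving Airy contributions have exactly the combinatorial structure of the correlation functions of the determinantal Airy point process. Establishing this bookkeeping --- in essence a quantitative law of large numbers that annihilates the gas background while retaining the coherent Airy signal --- is the technical heart of the argument, whereas the moment-determinacy of $\mu_{\Ai}$ and the reduction to moments are routine.
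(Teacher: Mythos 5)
Your outline captures the mechanism that actually drives Theorem \ref{thm:TPlimit}: the decomposition of $(\mathbb{K}^{\text{TP}}_{a,1})^{-1}$ from Theorem \ref{thm:TPasymptotics} into an $O(1)$ gas part with exponentially decaying correlations and an $O(n^{-1/3})$ coherent Airy part, together with the observation that averaging over the $M$ well-separated copies suppresses the former by a factor $1/M$ while preserving the latter. Where you depart from the paper is the probabilistic tool: the proof sketched here (and carried out in \cite{BCJ}) works with the Laplace transform $\mathbb{E}[\exp(\sum_j\lambda_j\mu_n(A_j))]$ and the \emph{cumulant} expansion for determinantal point processes, not with raw joint moments. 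This is not cosmetic. For a determinantal process the $m$-th joint cumulant of linear statistics is a finite combination of sums of cyclic products $L(e_1,e_2)L(e_2,e_3)\cdots L(e_m,e_1)$, and it is in this cyclic form that the ``subtle cancellations'' between the two length scales can be organized: a cycle containing $p\ge 1$ Airy factors picks up $(n^{-1/3})^p$ against at most $(n^{1/3})^p$ free cluster sums and survives at order one, while purely-gas cycles are confined to a single copy and are beaten by the surplus $M^{-(m-1)}$ of normalization. Reassembling this from moments via the moment--cumulant relations is viable in principle (moment determinacy of $\mu_{\Ai}(A_j)$ does hold, since $z\mapsto\det(I-(1-z)K_{\Ai}\mathbb{I}_{A_j})$ is entire), but it obscures exactly the cancellation structure the argument lives on, so you pay a combinatorial price for no gain; the Laplace route also gives convergence in distribution directly without the determinacy step.

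Three corrections to your outline. First, $\bigcup_k I_{j,k}$ has extent $M\cdot O(\sqrt{M})=O((\log n)^6)$ lattice spacings in the $e_2$ direction, not $M^{3/2}n^{2/3}$; the constraints on $M$ are that $M\to\infty$ (to kill the gas), that the separation $\sqrt{M}\to\infty$ fast enough that $e^{-c\sqrt{M}}$ beats all polynomial entropy factors, and that $M^{3/2}=o(n^{2/3})$ so that every copy sees the same value of the Airy part; $(\log n)^4$ is one convenient choice. Second, the first-moment computation is not a formality: the prefactors $c_0\mathtt{g}_{\eps_1,\eps_2}$ in (\ref{TPasymptotics}) and the signed weights defining $\Delta h(I_{j,k})$ must conspire to give $\mathbb{E}[\mu_n(A_j)]\to\int_{A_j}K_{\Ai}(\xi,\xi)\,d\xi$. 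Third, your steps (iii)--(iv) are, as you acknowledge, the technical heart; the cross terms mixing one gas factor with one Airy factor inside a single copy already contribute at order one per copy and are only removed by the $1/M$ averaging, so the bookkeeping must be done uniformly in the order of the cumulant (or moment). With those caveats, your strategy is sound and essentially parallel in substance, differing in the choice of moments over cumulants.
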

 
The theorem can be generalized to the extended Airy kernel point process, see \cite{BCJ}. The proof of the theorem \ref{thm:TPlimit} uses Laplace transforms and the cumulant method for determinantal point processes. The proof is rather subtle since we have two length scales, the local length scale of $\mathbb{K}_{1,1}^{-1}$ which is of order 1 and the length scale of
order $n^{1/3}$ of the Airy kernel, and the result depends on subtle cancellations.


\section{Appendix} \label{sec:appendix}
We briefly summarize some facts about partitions and Schur polynomials that we use in the paper and refer to \cite{Mac} and \cite{Sta} for more details. 

A \emph{partition} $\lambda=(\lambda_1,\lambda_2,\lambda_3,\dots)$ of $n$ is a sequence of non-negative integers such that $|\lambda|:=\lambda_1+\lambda_2+\dots=n$, and $\lambda_1\ge\lambda_2\ge\dots$.
The largest $\ell$ such that $\lambda_\ell>0$ is called the \emph{length} of the partition and is denoted by $\ell(\lambda)$. The partition of zero is the empty partition
and is denoted by $\emptyset$. We can think of a partition as a \emph{Young diagram}, where we have $\lambda_i$ left-adjusted boxes in row $i$, $i\ge 1$. If we consider
the length of the columns of the Young diagram from left to right we get a new partition $\lambda'$, the \emph{conjugate partition}. A partition can also be written as 
$\lambda=\langle 1^{m_1}2^{m_2}3^{m_3}\dots\rangle$, where $m_i$ is equal to the number of $i$'s in the partition. For two partitions $\lambda,\mu$, we write 
$\mu\subseteq\lambda$ if $\mu_i\le\lambda_i$ for all $i\ge 1$. We call $\lambda/\mu$ a \emph{skew partition} if $\mu\subseteq\lambda$, and we can think of it as those
boxes in the Young diagram of $\lambda$ that are not in the Young diagram for $\mu$, we get a \emph{skew Young diagram}. The skew partition $\lambda/\mu$ is a \emph{horizontal
strip} if $\lambda/\mu$ has at most one square in each column, i.e. $0\le\lambda_i'-\mu_i'\le 1$, $i\ge 1$.

Let $x_1,\dots, x_n$ be variables. The $k$:th \emph{elementary symmetric polynomial} $e_k(x_1,\dots,x_n)$ can be defined via its generating function,
\begin{equation}\label{elsymmpol}
\sum_{k\in\mathbb{Z}}e_k(x_1,\dots,x_n)z^k=\prod_{i=1}^n(1+x_iz).
\end{equation}
Note that $e_k(x_1,\dots,x_n)\equiv 0$ if $k<0$ or $k>n$. The 
$k$:th \emph{complete symmetric polynomial} $h_k(x_1,\dots,x_n)$ is defined by
\begin{equation}\label{comsymmpol}
\sum_{k\in\mathbb{Z}}h_k(x_1,\dots,x_n)z^k=\prod_{i=1}^n\frac 1{1-x_iz},
\end{equation}
and we see that $h_k=0$ if $k<0$.
For a skew partition $\lambda/\mu$ the \emph{skew Schur polynomial} $s_{\lambda/\mu}(x_1,\dots,x_n)$  is given by the Jacobi-Trudi identity,
\begin{equation}\label{JacobiTrudi1}
s_{\lambda/\mu}(x_1,\dots,x_n)=\det\big(h_{\lambda_i-\mu_j-i+j}(x_1,\dots,x_n)\big)_{1\le i,j\le M},
\end{equation}
for $M\ge\ell(\lambda)$.
We also have the Jacobi-Trudi identity
\begin{equation}\label{JacobiTrudi2}
s_{\lambda'/\mu'}(x_1,\dots,x_n)=\det\big(e_{\lambda_i-\mu_j-i+j}(x_1,\dots,x_n)\big)_{1\le i,j\le M},
\end{equation}
for $M\ge \lambda_1$.
If $\mu=\emptyset$, then $s_\lambda(x_1,\dots, x_n)=s_{\lambda/\emptyset}(x_1,\dots, x_n)$ is the \emph{Schur polynomial} and it is also given by a quotient of
determinants
\begin{equation}\label{ClassicalSchur}
s_\lambda(x_1,\dots, x_n)=\frac{\det\big(x_i^{\lambda_j+n-j}\big)_{1\le i,j\le n}}{\det\big(x_i^{n-j}\big)_{1\le i,j\le n}},
\end{equation}
which is called the classical definition of the Schur polynomial.

We will write $\lambda\succ\mu$ if the two partitions $\lambda$ and $\mu$ are \emph{interlacing}, i.e.
\begin{equation}\label{Partitioninterlacing}
\lambda_1\ge\mu_1\ge\lambda_2\ge\mu_2\ge\lambda_3\ge\dots.
\end{equation}
This means that $\lambda/\mu$ is a horizontal strip. It follows from (\ref{JacobiTrudi2}) that for one variable $a$, we have the identity
\begin{equation}\label{Schurinterlacing}
s_{\lambda/\mu}(a)=a^{|\lambda|-|\mu|}\mathbb{I}_{\lambda\succ\mu}.
\end{equation}

A very basic and useful identity for determinantal point processes is the following formula, the Cauchy-Binet or Andrieff identity. Let $(X,\mu)$ be some measurable space and
$f_i:X\mapsto\mathbb{C}$, $g_i:X\mapsto\mathbb{C}$, $1\le i\le n$, some measurable functions. Then,
\begin{equation}\label{CBAnd}
\frac 1{n!}\int_{X^n}\det\big(f_i(x_j)\big)_{1\le i,j\le n}\det\big(g_i(x_j)\big)_{1\le i,j\le n}d^n\mu(x)=
\det\bigg(\int_X f_i(x)g_j(x)\,d\mu(x)\bigg)_{1\le i,j\le n}
\end{equation}
provided all expressions are well-defined. This formula can also be used to prove many properties of Schur polynomials.



\end{document}